\def\ps@pprintTitle{%
  \let\@oddhead\@empty
  \let\@evenhead\@empty
  \def\@oddfoot{\reset@font\hfil\thepage\hfil}
  \let\@evenfoot\@oddfoot
}
\newtheorem{theorem}{Theorem}[section]
\newtheorem{proposition}[theorem]{Proposition}
\newtheorem{lemma}[theorem]{Lemma}
\newtheorem{corollary}[theorem]{Corollary}
\newtheorem{remark}[theorem]{Remark}
\theoremstyle{definition}
\newtheorem{definition}{Definition}[section]
\newcommand{\grad}{\nabla}
\newcommand{\norm}[2]{\left\lVert #1\right\rVert_{#2}}
\newcommand{\weaklyto}{\rightharpoonup}
\newcommand{\Lip}[1]{\mathrm{Lip}(#1)}
\begin{document}
\hypersetup{
  urlcolor     = blue, 
  linkcolor    = Bittersweet, 
  citecolor   = Cerulean
}


\title{Directional differentiability for elliptic quasi-variational inequalities of obstacle type}

\author{Amal Alphonse, Michael Hinterm\"uller and Carlos N. Rautenberg}
\affil{Weierstrass Institute\\Mohrenstrasse 39\\10117 Berlin\\Germany}
\maketitle
\begin{abstract}
The directional differentiability of the solution map of obstacle type quasi-variational inequalities (QVIs) with respect to perturbations on the forcing term is studied.  The classical result of Mignot is then extended to the quasi-variational case under assumptions that allow multiple solutions of the QVI. The proof involves selection procedures for the solution set and represents the directional derivative as the limit of a monotonic sequence of directional derivatives associated to specific variational inequalities. Additionally, estimates on the coincidence set and several simplifications under higher regularity are studied. The theory is illustrated by a detailed study of an application to thermoforming comprising of modelling, analysis and some numerical experiments.\end{abstract}

\maketitle
\section{Introduction}
Quasi-variational inequalities (QVIs) were first formulated and studied by Bensoussan and Lions \cite{BensoussanLionsArticle,LionsDistr} in the context of stochastic impulse control where solutions to the QVI of interest determine value functionals of the impulse control problem. In recent years QVIs have demonstrated to be versatile models for physical phenomena where nonsmoothness  and nonconvexity are prevalent. For example, this includes superconductivity \cite{KunzeRodrigues,BarrettPrigozhinSuperconductivity,Prigozhin,MR3335194,MR1765540}, sandpile formation and growth \cite{PrigozhinSandpile,BarrettPrigozhinSandpile,Prigozhin1996,Prigozhin1994,MR3335194}, and the determination of lakes and river networks \cite{Prigozhin1996,Prigozhin1994,MR3231973}, among other applications. 

In general, QVIs represent a step further in complexity in comparison to variational inequalities (VIs). The main difference resides in the fact that for QVIs the constraint set depends on the state variable itself instead of being constant as in the VI case. This adds to the nonsmooth and nonlinear nature of the problem and represents a major obstacle for the sensitivity study of this type of problem, but it also poses challenges on more fundamental levels: using arguments based on direct methods in the calculus of variations to show existence of solutions falls short of the task, and development of solution algorithms require a problem-tailored approach; see \cite{MR3648950,MR3119319,MR3023771,MR3335194,MR3231973,BarrettPrigozhinSandpile,BarrettPrigozhinSuperconductivity}. 

For VIs in infinite dimensions, the study of sensitivity and directional differentiability of the forcing term to state map and/or the associated metric projection operators has been considered in the fundamental works by Mignot \cite{Mignot}, Haraux \cite{Haraux1977} and Zarantonello \cite{Zarantonello1971}. In particular, the differentiability question is related to the polyhedricity property of the constraint set associated to the VI. Recent studies of sensitivity for VIs of the second kind can be found in \cite{HintThomas,Christof2018,ChristofWachsmuth}. Differentiability issues for the finite dimensional QVI case have been studied in \cite{MR2338444,MR1471270,MR1427449,MR1314522,kovcvara1995class}. To the best of our knowledge, the sensitivity analysis for QVIs in infinite dimensions is missing in the literature, and it is the purpose of the present work to bridge this gap.


We provide in what follows some basic assumptions on function spaces, operators, and notation, and a description of the structure of the main proof of the paper. 
Let $X$ be a locally compact topological space which is countable at infinity and let $\xi$ be a Radon measure on $X$. Suppose that there is a Hilbert space $V$ with $V \subset L^2(X;\xi)$ a continuous and dense embedding such that $|u| \in V$ and 
\[\text{there exists $\kappa > 0$ with $\lVert u^+\rVert_{V} \leq \kappa \norm{u}{V}$}\]
whenever $u \in V$. Let $A\colon V \to V^*$ be a linear operator satisfying the following properties for all $u \in V$: 
\begin{align*}
\langle Au, v \rangle &\leq C_b\norm{u}{V}\norm{v}{V}\tag{boundedness}\\
\langle Au, u \rangle &\geq C_a\norm{u}{V}^2\tag{coercivity}\\
\langle Au^+, u^- \rangle &\leq 0\tag{T-monotonicity}
\end{align*}
where $\langle \cdot, \cdot \rangle = \langle \cdot, \cdot \rangle_{V^*, V}$ is the standard duality pairing and we use the convention $u= u^+ - u^-$. 
Define the bilinear form $a\colon V \times V \to \mathbb{R}$ associated to the operator $A$ by $a(u,v) := \langle Au, v \rangle$. Under the above circumstances, $(V,a)$ falls into the class of \emph{positivity preserving coercive forms} with respect to $L^2(X;\xi)$ \cite{MaRockner,Bliedtner} (the `positivity preserving' terminology refers to the T-monotonicity but there are other equivalent conditions, see \cite[Proposition 1.3]{MaRockner}). We further assume that 
\begin{equation}\label{eq:assDensity}
V\cap C_c(X)  \subset C_c(X) \qquad \text{and}\qquad  V\cap C_c(X)  \subset V \qquad\text{are dense embeddings,}
\end{equation}
where the density is with respect to the supremum norm and the $V$ norm respectively. Forms $(V,a)$ that satisfy this density property are called \emph{regular} \cite[\S 1.1]{FukushimaBook} \cite[\S 2]{Bliedtner}.  This framework allows us to define the notions of capacity, quasi-continuity and related objects, see \cite[\S 3]{Mignot} and \cite[\S 3]{Haraux1977}.
We have in mind here $A$ as a linear elliptic differential operator and the space $V$ as a Sobolev space over a domain in $\mathbb{R}^n$. We will give more details of this and some concrete examples of the above definitions and spaces in \S \ref{sec:examples}.

\begin{remark}A space $V$ under all of the previous assumptions except the second density assumption in \eqref{eq:assDensity} is referred to by Mignot in \cite{Mignot} as a `Dirichlet space' --- this is rather inconsistent with the modern literature \cite{FukushimaBook} where Dirichlet spaces and Dirichlet forms are defined differently (see \cite[\S 1.1]{FukushimaBook}), for example in place of the T-monotonicity property the following \emph{Markov property} should hold:
\[\text{if $u \in V$ then $\bar u := \min(u^+,1) \in V$ and $a(\bar u,\bar u) \leq a(u,u)$}.\]
However, note that if the Markov property holds then so does T-monotonicity \cite[Remark 1.4]{MaRockner} \cite[Proposition 5]{Ancona} and hence a Dirichlet form is also a positivity preserving form.
\end{remark}
\begin{remark}
It should be possible to generalise the above setting of positivity preserving spaces $(V,a)$ to reflexive Banach spaces using for example the theory in \cite{MR2869508} which generalises \cite{Mignot}.
\end{remark}
Set $H:= L^2(X;\xi)$ and let $\Phi \colon H \to V$ be a possibly nonlinear map with $\Phi(0) \geq 0 \text{ a.e., }$ and suppose that it is increasing in the sense that $u \geq v$ a.e. (for $u, v \in H$) implies $\Phi(u) \geq \Phi(v)$ a.e. We define the set-valued mapping $\mathbf{K} \colon H \rightrightarrows V$ by
\begin{equation*}\label{eq:defnK}
\mathbf{K}(\varphi) := \{ v \in V : v \leq \Phi(\varphi)\text{ a.e.}\}.
\end{equation*}
For fixed $\varphi \in H$, $\mathbf{K}(\varphi)$ is a closed, convex and non-empty set. Given $f \in V^*$, consider the QVI
\begin{equation}\label{eq:QVI}
u \in \mathbf{K}(u) : \quad \langle Au-f, u-v \rangle \leq 0 \quad \forall v \in \mathbf{K}(u).\tag{P\textsubscript{QVI}}
\end{equation}
In general, there are multiple solutions of \eqref{eq:QVI} (this and the existence theory for the QVI will be discussed later on) so we denote by $\mathbf{Q}\colon V^* \rightrightarrows V$ the set-valued mapping that takes a source term into the set of solutions of \eqref{eq:QVI} with that right hand side source; hence \eqref{eq:QVI} reads $u \in \mathbf{Q}(f)$. When $\mathbf{K}$ is a constant mapping $\mathbf{K}(\varphi) \equiv \mathbf{K}$ the problem \eqref{eq:QVI} reduces to a standard variational inequality. In this work, we are interested in the differential sensitivity analysis of the map $$f \mapsto \mathbf{Q}(f);$$ more precisely, we wish to show that a particular realisation of the multi-valued map $\mathbf{Q}$ is directionally differentiable. Such a result is of independent interest in itself but it is also a necessary step for deriving first order stationarity conditions for optimal control problems where the state is related to the control through a QVI (the optimal control problem for VIs has been studied in the principal works \cite{Mignot,MR739836}). Furthermore, if the directional derivative can be suitably characterised then this information can be used to develop efficient bundle-free numerical solvers and solution algorithms as done in the case of the optimal control of the obstacle problem \cite{MHintermueller_TMSurowiec_MAPR}.


Let us discuss our approach and the difficulties encountered in the paper. The idea is to approximate $q(t) \in \mathbf{Q}(f+td)$ by a sequence of variational inequalities (each of which has a fixed obstacle), obtain suitable differential formulae for those VIs and then pass to the limit. There are some delicacies in this procedure:
\begin{itemize}
\item the derivation of the expansion formulae for the above-mentioned VI iterates takes some work, since they must relate $q(t)$ to a solution $u \in \mathbf{Q}(f),$ and recursion plays a highly nonlinear role in the relationship between one iterate and the preceding iterates
\item obtaining uniform bounds on the directional derivatives is not easy in the general case even though the derivatives satisfy a VI; it requires us to handle a recurrence inequality unless some regularity is available
\item proving that the higher order terms in the expansion formulae for the VI iterates converge in the limit to a term which is also higher order is difficult since this involves two limits and commutation of limits in general requires an additional uniform convergence.
\end{itemize}
Indeed, the main difficulty is the final point above. Although we do obtain some monotonicity properties of the directional derivatives and the higher order terms of the iterates, this information unfortunately does not help us as much as expected so more graft is needed to achieve our results. We will comment on this and the other technical difficulties throughout the paper as appropriate.

\subsection{Some definitions and assumptions on the data}
We define the closed convex cones
\begin{align*}
H_+ &:= L^2_{+}(X;\xi) := \{ v \in L^2(X;\xi) : v \geq 0 \text{ a.e.}\}\\
V_+ &:= \{ v \in V : v \geq 0 \text{ a.e.}\}.
\end{align*}
The latter can be used to give a canonical ordering to the dual space $V^*$ through the cone
\[V^*_+ := \{g \in V^* : \langle g, v \rangle_{V^*,V} \geq 0 \quad \forall v \in V_+\}\]
so that for elements $g, h \in V^*$, the inequality $g \geq h$ in $V^*$ is defined to mean $g-h \in V^*_+$. We also use the notation 
\[L^\infty_+(X;\xi) := \{v \in L^\infty(X;\xi) : v \geq 0\text{ a.e.}\}.\]

We first assume that the data $f \in V^*_+$ 
and define $\bar u \in V$ as the (non-negative) weak solution of the unconstrained problem
\begin{equation}\label{eq:baru}
\begin{aligned}
A\bar u &= f
\end{aligned}
\end{equation}
which is a linear PDE (and indeed it has a unique solution thanks to the Lax--Milgram lemma).

\begin{definition}[The function $u$]
We fix $u \in \mathbf{Q}(f)$ as an arbitrary solution of \eqref{eq:QVI} such that $u \in [0,\bar u]$\footnote{This exists by Theorem \ref{thm:existenceForQVI}.} (this means $0 \leq u \leq \bar u$ a.e. in $\Omega$).
\end{definition}
We introduce the following possible hypotheses on $\Phi$ (in addition to the ones we stated at the start, \textbf{which are always assumed to stand}). Note that we do not enforce all of these assumptions in every lemma or theorem; on the contrary we shall be selective so that we keep results applicable in as general a setting as possible. Below, we use the notation $B_R(z) := \{ v \in V : \norm{v-z}{V} \leq R\}$ to mean the closed ball of radius $R$ around $z \in V$.
\begin{enumerate}[label=(\textbf{A\arabic*})]
\item \label{itm:PhiDiff} 
The map $\Phi\colon V \to V$ is Hadamard directionally differentiable. That is, for all $v$ and all $h$ in $V$, the limit
\[\lim_{\substack{h' \to h\\t \to 0^+}}\frac{\Phi(v+t h')-\Phi(v)}{t}\]
exists in $V$, and we write the limit as $\Phi'(v)(h)$. Hence, if $h(t) \to h$, then
\begin{equation}\label{eq:derivativeOfPhi}
\Phi(v+th(t)) = \Phi(v) + t\Phi'(v)(h) + \hat l(t,h,h(t),v)
\end{equation}
holds where $\hat l$ is a higher order term, i.e.,  $t^{-1}\hat l(t,h,h(t),v) \to 0$ as $t \to 0^+$. We write $\hat l(t,h,h,v) = l(t,h,v)$  when $h(t) \equiv h$.





\item  We need one of the following:
\begin{enumerate}
\item\label{itm:compContOfPhi} $\Phi\colon V \to V$ is completely continuous, 
or
\item\label{itm:PhiConcaveEtc}  $V=H^1(\Omega)$, $X = \overline{\Omega}$ where $\Omega$ is a bounded Lipschitz domain, $\Phi\colon V \cap L^\infty_+(\Omega) \to L^\infty_+(\Omega)$ and is concave with $\Phi(0) \geq c > 0.$ 
\end{enumerate}
\item\label{itm:compContOfDerivOfPhi} 
The map $\Phi'(v)\colon V \to V$ is completely continuous (for fixed $v \in V$).
\item\label{item:assPhiDerivativeBounded} For any $b \in V$, $h\colon(0,T) \to V$ and $\lambda \in [0,1]$, 
\[\frac{\norm{\Phi'(u+tb+\lambda h(t))h(t)}{V}}{t} \to 0\quad\text{as $t \to 0^+$ whenever $\frac{h(t)}{t} \to 0$ as $t \to 0^+$}.\]


\item\label{itm:smallnessOfDerivOfPhi}
There exists $\epsilon > 0$ such that
\[\norm{\Phi'(z)(v)}{V} \leq C_\Phi\norm{v}{V} \quad \forall z \in B_\epsilon(u),\; \forall v \in V\]
with \[C_\Phi < \frac{1}{1+{C_a}^{-1}C_b},\]
%
where $C_a$ and $C_b$ are the constants of coercivity and boundedness from earlier. (This is a sufficient condition; what we really need is \eqref{eq:requiredSmallness} and there may be a better way of phrasing the assumption).
\end{enumerate}
Since $\Phi$ is Hadamard directionally differentiable, it is also compactly differentiable (see \cite{Shapiro}). This means that
\begin{equation*}\label{eq:lotOfL}
\frac{l(t,h,v)}{t} \to 0\quad\text{uniformly in $h$ on the compact subsets of $V$}.
\end{equation*}
Observe carefully that \ref{item:assPhiDerivativeBounded} and \ref{itm:smallnessOfDerivOfPhi} depend on the specific function $u$, i.e., these are \emph{local} conditions. Note also that the radius $\epsilon$ in \ref{itm:smallnessOfDerivOfPhi} can be arbitarily small, i.e., the stated boundedness of the derivative of $\Phi$ is required only around a closed ball at $u$ with arbitrarily small radius.
\begin{remark}In fact, \ref{itm:PhiDiff} can be weakened significantly by requiring Hadamard differentiability of $\Phi$ only around the point $u$, i.e., locally, as in assumptions \ref{item:assPhiDerivativeBounded} and \ref{itm:smallnessOfDerivOfPhi}.
\end{remark}
\begin{remark}[Compactness vs. complete continuity]
Recall that a compact operator maps bounded sets in the domain into sets with compact closure in the range. If $\Phi$ is completely continuous, then it is compact. But since $\Phi$ is allowed to be nonlinear, if $\Phi$ is compact, it does not necessarily follow that $\Phi$ is completely continuous. Hence the set of compact operators is larger than the set of completely continuous operators.
\end{remark}
\begin{remark}\label{rem:onPhi}Some comments regarding the assumptions on $\Phi$ are in order.
\begin{enumerate}
\item If $\Phi$ is directionally differentiable and Lipschitz, it is Hadamard differentiable.

\item If $\Phi$ is Lipschitz, the following relationship between the Lipschitz constant and $\Phi'$ holds:
\[\norm{\Phi'(v)h}{V} \leq \Lip{\Phi}\norm{h}{V}\quad\forall v,h \in V.\]
\item If $\Phi$ is a superposition operator (i.e., $\Phi(z)(x) = \Phi(z(x))$) and $\Phi \colon H^1(\Omega) \to H^1(\Omega)$ is compact, then $\Phi$ can only be a constant function. 

\item If \ref{itm:PhiConcaveEtc} holds and $f \in L^\infty_+(\Omega)$, then solutions of the QVI \eqref{eq:QVI} are unique \cite{Laetsch}.

\item Assumption \ref{itm:compContOfDerivOfPhi} need not imply the completely continuity of $\Phi$ itself (cf. \ref{itm:compContOfPhi}) however in the linear or affine case, $\Phi'(v)(h) = \Phi(h)$ and $l \equiv 0$ and 
\ref{itm:compContOfDerivOfPhi} is equivalent to \ref{itm:compContOfPhi}.
\item If $\Phi$ is linear, then $\hat l(t,h,h(t),v) = t\Phi(h(t)) - t\Phi(h)$ so if $h(t) = h + tb$, the remainder term is $\hat l(t,h,h(t),v) = t^2\Phi(b).$
\item Note that \ref{itm:smallnessOfDerivOfPhi} implies the existence of a constant $c > 0$ such that
\begin{equation}\label{eq:consequenceOfA5}
\norm{\Phi'(u)b}{V} 
\leq \frac{C_a-c}{C_b}\norm{b}{V},
\end{equation}
which we will use later on.
\item The assumption \ref{itm:smallnessOfDerivOfPhi}, in the case that $\Phi$ is linear, imposes a smallness condition on the operator norm of $\Phi$ which enforces uniqueness of solutions of the QVI. However, it does not necessarily rule out the multivalued setting in the case of nonlinear $\Phi$.
\end{enumerate}
\end{remark}
To state the main results of the paper, we first need some definitions. In a similar fashion to $\bar u$, define $\bar q(t) \in V$ as the solution of the unconstrained problem with right hand side $f+td$:
\begin{equation}\label{eq:barq}
\begin{aligned}
A\bar q(t) &= f+td.
\end{aligned}
\end{equation}
Define the zero level set mapping $\mathcal{Z}\colon V \rightrightarrows X $ and the coincidence set $\mathcal{A}\colon V \rightrightarrows X$ by
\begin{align*}
\mathcal{Z}(v) &:= \{ x \in X : v(x) = 0\}\\
\mathcal{A}(u) &:=\{x \in X : u(x) = \Phi(u)(x)\}
\end{align*}
 and observe that  
\begin{equation}\label{eq:AandZ}
\mathcal{A}(u) = \mathcal{Z}(u-\Phi(u)).
\end{equation}

\subsection{Examples}\label{sec:examples}
Having introduced the abstract positivity preserving forms $(V,a)$ earlier, we give now some concrete prototypical examples on domains in $\mathbb{R}^n$ with $\xi$ the Lebesgue measure. All of the following examples give rise to regular positivity preserving coercive forms in $L^2(X)$ (for various choices of $X$) satisfying the assumptions in the introduction.
\begin{enumerate}
\item Let $\Omega$ be a bounded Lipschitz domain, $V=H^1_0(\Omega)$ or $H^1(\Omega)$ and let $A$ be the linear second-order elliptic operator 
\[\langle Au, v \rangle =  \sum_{i,j=1}^n \int_\Omega a_{ij}\frac{\partial u}{\partial x_i}\frac{\partial v}{\partial x_j} + \sum_{i=1}^n \int_\Omega b_{i}\frac{\partial u}{\partial x_i}v + \int_\Omega c_0 uv\]
with coefficients $a_{ij}, b_i, c_0 \in L^\infty(\Omega)$ such that for all $\xi \in \mathbb{R}^n$ and for some $C>0$,
\[\sum_{i,j=1}^n a_{ij}\xi_i\xi_j \geq C|\xi|^2\quad\text{a.e., }\]
and $c_0 \geq \lambda > 0$ with $\lambda$ a constant. The space $X$ is
\begin{equation*}
X := \begin{cases}
\Omega : \text{if $V=H^1_0(\Omega)$}\\
\overline{\Omega} : \text{if $V=H^1(\Omega)$}.
\end{cases}
\end{equation*}
The choice of $\overline{\Omega}$ above ensures that the density condition \eqref{eq:assDensity} is fulfilled \cite[Example 1.6.1]{FukushimaBook}. The model example is $A=-\Delta + I$ (i.e., $a_{ij} = \delta_{ij}$, $b_i \equiv  0$ and $c_0 \equiv 1$), the Laplacian with a lower order term:
\begin{equation}\label{eq:modelCaseForA}
\langle Au, v \rangle = \int_\Omega \nabla u \cdot \nabla v + uv.
\end{equation}
\item Let $\Omega$ be the half space of $\mathbb{R}^d$ for $d \geq 2$,  $A$ be defined by \eqref{eq:modelCaseForA} with $V=H^1(\Omega)$ and $X=\overline{\Omega}$. This leads to a regular Dirichlet form \cite[\S1, Examples 1.5.3 and 1.6.2]{FukushimaBook}. The same is true for $\Omega=X=\mathbb{R}^d$ for any $d \geq 1$.
\item 
Let $V=H^s(\Omega)$ for $s \in (0,1)$ on a bounded Lipschitz domain $\Omega$, where the classical fractional Sobolev space $H^s(\Omega)$ is defined as the subspace of $L^2(\Omega)$ with the following norm finite:
\begin{equation}\label{eq:normHs}
\norm{u}{H^s(\Omega)} := \left(\int_{\Omega} u^2 + \int_{\Omega}\int_{\Omega}\frac{|u(x)-u(y)|^2}{|x-y|^{n+2s}}\right)^{\frac 12}.
\end{equation}
Set $\langle Au, v \rangle = (u,v)_{H^s(\Omega)}
$. In this case, $X := \overline{\Omega}.$
\item Recall the singular integral definition of the fractional Laplacian for sufficiently smooth functions $u\colon \mathbb{R}^d \to \mathbb{R}$:
\[(-\Delta)^s u(x) := c\int_{\mathbb{R}^d}\frac{u(x)-u(y)}{|x-y|^{n+2s}}\;\mathrm{d}y, \quad\text{where}\quad c=\frac {4^{s}\Gamma (d/2+s)}{\pi ^{d/2}|\Gamma (-s)|},\]
again for $s \in (0,1)$. Pick $V=H^{s}(\mathbb{R}^d)$ (this space is defined through the norm \eqref{eq:normHs} but with $\Omega$ replaced with $\mathbb{R}^d$) and 
define the operator
\[\langle Au, v \rangle := \int_\Omega (-\Delta)^{s\slash 2}u (-\Delta)^{s\slash 2}v + \int_\Omega uv,\]
and here we choose $X = \mathbb{R}^d.$ Then $(V,a)$ is a regular Dirichlet form on $L^2(\mathbb{R}^d)$. 
\end{enumerate}
For full details of fractional Sobolev spaces and fractional Laplace operators, see for example \cite{MR0290095,MR2944369}. The first and third examples above are Examples 1 to 3 in \cite[\S 3]{Mignot}. 

As for $\Phi$, suppose we are in the functional setting of the first example above. We have in mind
\[\Phi(u) := L^{-1}u\]
where $L\colon V \to V^*$ is an appropriate second-order linear elliptic operator. If $\Omega$ is sufficiently smooth, elliptic regularity would yield $\Phi(u) \in V \cap H^{2}(\Omega)$  (eg. in case $L=-\Delta$ provided $\Omega$ is a $C^2$-domain). Validity of  a weak comparison principle would imply that $\Phi$ is increasing (for example if $L=-\Delta$, with $\Phi(f_i) =  L^{-1}f_i =: u_i$ and $f_1 \leq f_2$, rearrange to get $L(u_1-u_2)=f_1-f_2$ and then test with $(u_1-u_2)^+$). If a continuous dependence estimate of the form
\[\norm{L^{-1}f}{H^{1+\epsilon}(\Omega)} \leq C\norm{f}{L^2(\Omega)}\]
is available, then it would imply, along the linearity of $L$, that $\Phi$ is completely continuous from $V$ into $V$ due to the compact embedding $H^1(\Omega) \subset L^2(\Omega)$. Also $\Phi\colon H \to V$ is clearly Lipschitz and so is Hadamard differentiable as explained in Remark \ref{rem:onPhi}. Linearity also implies that the derivative is completely continuous with respect to the direction. In summary, assumptions \ref{itm:PhiDiff}, \ref{itm:compContOfPhi}, \ref{itm:compContOfDerivOfPhi} and \ref{item:assPhiDerivativeBounded} 
are satisfied.

\subsubsection{Application to fluid flow}Let us consider on the domain $D:=\mathbb{R}^{n-1}\times \mathbb{R}^+$ the pressure $U\colon D \to \mathbb{R}$ of an incompressible fluid on $D$. We think of $\Omega:=\partial D =\mathbb{R}^{n-1}\times \{0\}$ as a membrane which allows fluid to leave the domain $D$ but does not admit fluid into $D$. Let $\Psi$ represent an external pressure applied on the membrane $\Omega$. When the pressure $U=\Psi$ flow is admitted and $\partial_\nu U > 0$ holds.
Otherwise when the external pressure $\Psi$ exceeds $U$, i.e, $U < \Psi$, then there is no flow and $\partial_\nu U = 0$. Assume also that the compartment $D$ is connected to $\mathbb{R}^{n-1}\times \mathbb{R}^-$ via some mechanism such that if $U$ increases then $\Psi$ increases too; this describes the physically reasonable situation where the external pressure changes as the fluid $U$ enters $\mathbb{R}^{n-1}\times\mathbb{R}^-$. Thus $\Psi=\Psi(U)$ depends on $U$ and if we posit that $U$ satisfies in equilibrium 
\[-\Delta U = F\quad\text{on $\Omega$}\]
for a forcing term $F$ active on the membrane, then the following inequality is satisfied:
\begin{align*}
U \leq \Psi(U): \quad\int_{D} \nabla U \cdot \nabla (U-V) &\leq \int_\Omega f(U-V)\qquad \forall V : V \leq \Psi(U)
\end{align*}
where $f=F|_{\Omega}$ is a forcing term on the membrane. To recast this inequality in a suitable form, first recall the well-known fact that the half-Laplacian $(-\Delta)^{1\slash 2}$ of a function $w$ defined on $\mathbb{R}^{n-1}$ can be characterised as the Dirichlet-to-Neumann mapping (for example see \cite{CabreTan}) of the harmonic extension of $w$ onto $\mathbb{R}^{n-1} \times \mathbb{R}^+ = D$:
\begin{align*}
(-\Delta)^{1\slash 2}w := \partial_n W|_{\Omega} \quad \text{where} \quad
\begin{cases}
-\Delta W &=0 \quad\text{on $D$}\\
W &= w\quad\text{on $\Omega$}.
\end{cases}
\end{align*}
If we then define $u:=U|_\Omega$ and use the above (after recalling the definition of the weak normal derivative) we find that $u \in H^{1\slash 2}(\mathbb{R}^{n-1})$ satisfies
\begin{align*}
u \leq \Psi(u):
\quad \langle (-\Delta)^{1\slash 2}u, u-v \rangle &\leq \langle f, u-v \rangle\qquad \forall v \in H^{1\slash 2}(\mathbb{R}^{n-1}) : v \leq \Psi(u).
\end{align*}
If we add a regularisation term $\epsilon(u,u-v)$ to the left hand side of the inequality above, this application fits into our framework under appropriate assumptions on $\Psi$. This example is related to the Signorini problem, see \cite[Chapter 3]{Mosco1976} and \cite{2017arXiv171207001A}.

\subsection{Main results}
We now state the main theorems in this work, that of the directional differentiability for QVIs and a regularity result for the derivative under certain circumstances. Here and throughout the paper, we shall use the terminology q.e. to mean quasi-everywhere; a statement holds quasi-everywhere if it holds everywhere except on a set of capacity zero. For the definition of capacity and related notions we refer the reader to the texts \cite{Bonnans,Delfour}. 

We begin with the main sensitivity result.
\begin{theorem}\label{thm:main}
Given $f \in V^*_+$ and $d \in V^*_+$, for every $u \in \mathbf{Q}(f)\cap [0,\bar u]$, under assumptions \ref{itm:PhiDiff}, either \ref{itm:compContOfPhi} or both \ref{itm:PhiConcaveEtc} and $f, d \in L^\infty_+(\Omega)$, \ref{itm:compContOfDerivOfPhi},  \ref{item:assPhiDerivativeBounded} and  \ref{itm:smallnessOfDerivOfPhi}, there exists a function $q(t) \in \mathbf{Q}(f+td)\cap [u, \bar q(t)]$  and a function $\alpha=\alpha(d)\in V_+$ such that
\[q(t) = u + t\alpha + o(t)\quad\forall t > 0\]
holds where $t^{-1}o(t) \to 0$ as $t \to 0^+$ in $V$ and $\alpha$ satisfies the QVI
\begin{equation*}\label{eq:QVIforAlpha}
\begin{aligned}
&\alpha \in \mathcal{K}^u(\alpha) : \langle A\alpha - d, \alpha - v \rangle \leq 0 \quad \forall v \in \mathcal{K}^u(\alpha)\\
&\mathcal{K}^u(w) := \{ \varphi \in V : \varphi \leq \Phi'(u)(w) \text{ q.e. on }\mathcal{A}(u) \text{ and } \langle Au-f, \varphi -\Phi'(u)(w)\rangle = 0\}.
\end{aligned}
\end{equation*}
The directional derivative $\alpha=\alpha(d)$ is positively homogeneous in $d$. 
\end{theorem}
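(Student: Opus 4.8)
We sketch the strategy. The plan is to realise $q(t)$ as the strong limit of an \emph{increasing} sequence of solutions of obstacle variational inequalities (VIs) with \emph{fixed} obstacles, and to propagate a first-order expansion along that iteration. Writing $S(g,\psi)$ for the solution of the VI with datum $g \in V^*$ over the set $\{v \in V : v \le \psi\}$, set $q_1(t) := S(f+td,\Phi(u))$ and, for $n \ge 1$, $q_{n+1}(t) := S(f+td,\Phi(q_n(t)))$. Since $d \in V^*_+$, a T-monotonicity comparison gives $q_1(t) \ge S(f,\Phi(u)) = u$ (the last equality holds because $u \in \mathbf{Q}(f)$ means precisely that $u$ solves the VI with obstacle $\Phi(u)$); as $\Phi$ is increasing and $S$ is monotone in its second argument, $(q_n(t))_n$ is then increasing, and a comparison with $\bar q(t)$ via coercivity and T-monotonicity yields $q_n(t) \le \bar q(t)$. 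Hence $q_n(t) \to q(t) \in \mathbf{Q}(f+td)\cap[u,\bar q(t)]$ in $V$ (cf. Theorem~\ref{thm:existenceForQVI}), and $q_n(0) = u$ for every $n$. This already produces a $q(t)$ with $q(t) \ge u$, from which $\alpha \ge 0$ will follow once $\alpha$ is built.

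Next I would establish, or invoke from \cite{Mignot,Haraux1977}, a moving-obstacle version of Mignot's theorem, available since the sets $\mathbf{K}(\varphi)$ are polyhedric (as $(V,a)$ is a regular positivity preserving coercive form): if $g(t) = g + td + o(t)$ and $\psi(t) = \psi + t\eta + o(t)$ in $V$, then $S(g(t),\psi(t)) = S(g,\psi) + t\delta + o(t)$ in $V$, where $\delta$ solves the VI with datum $d$ over the critical cone $\{\varphi \in V : \varphi \le \eta \text{ q.e. on } \{S(g,\psi)=\psi\},\ \langle AS(g,\psi) - g, \varphi - \eta\rangle = 0\}$. Applied to $q_1(t)$, whose obstacle $\Phi(u)$ does not move, this gives $q_1(t) = u + t\alpha_1 + o_1(t)$ with $\alpha_1$ solving the VI over $\mathcal{K}^u(0)$; expanding the obstacle of the $(n+1)$-st iterate by \ref{itm:PhiDiff} as $\Phi(q_n(t)) = \Phi(u) + t\Phi'(u)(\alpha_n) + \hat l(t,\alpha_n,\alpha_n + t^{-1}o_n(t),u)$, an induction then yields $q_{n+1}(t) = u + t\alpha_{n+1} + o_{n+1}(t)$ with $\alpha_{n+1}$ solving the VI with datum $d$ over $\mathcal{K}^u(\alpha_n)$ (the coincidence set in the critical cone being $\mathcal{A}(u)$, because $\{q_{n+1}(t) = \Phi(q_n(t))\} \to \mathcal{A}(u)$ as $t \to 0^+$). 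The recursion is genuinely nonlinear: $o_{n+1}(t)$ depends on $o_n(t)$ through the argument of $\hat l$, on $\hat l$ itself, and on the VI remainder at level $n+1$.

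Because each $\mathcal{K}^u(w) = \Phi'(u)(w) + \mathcal{C}_u$ with $\mathcal{C}_u := \{\chi \in V : \chi \le 0 \text{ q.e. on } \mathcal{A}(u),\ \langle Au - f,\chi\rangle = 0\}$ a fixed closed cone, testing the VI for $\alpha_n$ with the admissible elements $\Phi'(u)(\alpha_{n-1})$ and $2\alpha_n - \Phi'(u)(\alpha_{n-1})$ gives $\langle A\alpha_n - d, \alpha_n - \Phi'(u)(\alpha_{n-1})\rangle = 0$, whence, using coercivity, boundedness and the growth bound \eqref{eq:consequenceOfA5}, a recurrence $\norm{\alpha_n}{V} \le c_1\norm{d}{V^*} + \theta\norm{\alpha_{n-1}}{V}$ with $\theta < 1$ — this is exactly what \ref{itm:smallnessOfDerivOfPhi} secures. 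Thus $\sup_n\norm{\alpha_n}{V} < \infty$, and together with the monotonicity $\alpha_n \le \alpha_{n+1}$ inherited from $q_n(t) \le q_{n+1}(t)$ the sequence converges; call the limit $\alpha$. By \ref{itm:compContOfDerivOfPhi}, $\Phi'(u)(\alpha_n) \to \Phi'(u)(\alpha)$ strongly, and since $\mathcal{C}_u$ is fixed the sets $\mathcal{K}^u(\alpha_{n-1})$ converge to $\mathcal{K}^u(\alpha)$ in the sense of Mosco (a quasi-continuity argument as in \cite{Mignot}); passing to the limit in the VIs, with the usual energy argument to upgrade to strong convergence of $\alpha_n$, identifies $\alpha$ as a solution of $\alpha \in \mathcal{K}^u(\alpha)$, $\langle A\alpha - d, \alpha - v\rangle \le 0$ for all $v \in \mathcal{K}^u(\alpha)$.

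It remains to pass $n \to \infty$ in $q_n(t) = u + t\alpha_n + o_n(t)$: this gives $q(t) = u + t\alpha + o(t)$ with $o(t) := q(t) - u - t\alpha = \lim_n o_n(t)$, and one must verify $t^{-1}o(t) \to 0$ in $V$ as $t \to 0^+$. For each fixed $n$ this holds, but the conclusion needs a modulus uniform in $n$ — this is the crux, since interchanging $t \to 0^+$ and $n \to \infty$ requires such uniformity. I would obtain it by extracting from the recursion an estimate $\norm{o_n(t)}{V} \le \sum_{m=1}^{n}\theta^{\,n-m}\rho_m(t)$, where $\rho_m(t)$ bundles $\norm{\hat l(t,\alpha_m,\alpha_m + t^{-1}o_m(t),u)}{V}$ with the $m$-th VI remainder; since $(\alpha_m)$ is bounded and, under the complete-continuity hypotheses (\ref{itm:compContOfDerivOfPhi} together with \ref{itm:compContOfPhi} or \ref{itm:PhiConcaveEtc}), precompact, compact differentiability of $\Phi$ and of the VI solution map forces $t^{-1}\rho_m(t) \to 0$ uniformly in $m$, whence $t^{-1}\norm{o_n(t)}{V} \le (1-\theta)^{-1}\sup_m t^{-1}\rho_m(t) \to 0$ uniformly in $n$; by weak lower semicontinuity of the norm the same bound passes to $o(t)$. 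Letting first $n \to \infty$ and then $t \to 0^+$ concludes the expansion. Finally $\alpha \ge 0$ follows by taking the limit in $t^{-1}(q(t)-u) \ge 0$, and positive homogeneity $\alpha(\lambda d) = \lambda\alpha(d)$ for $\lambda > 0$ holds because every ingredient scales linearly under $d \mapsto \lambda d$ — the function $\bar q(t)$, the iterates $q_n(t)$ and coefficients $\alpha_n$, the positively homogeneous map $\Phi'(u)(\cdot)$, and the cones, since $\mathcal{K}^u(\lambda w) = \lambda\mathcal{K}^u(w)$. The principal obstacle, as flagged in the introduction, is precisely this uniform-in-$n$ control of the higher-order remainders $o_n(t)$.
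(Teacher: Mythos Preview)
Your proposal is correct and follows essentially the same route as the paper: the same VI iteration $q_n(t)=S(f+td,q_{n-1})$ starting at $q_0=u$, the same inductive expansion $q_n(t)=u+t\alpha_n+o_n(t)$ with $\alpha_n$ solving the VI over $\mathcal{K}^u(\alpha_{n-1})$, the same boundedness-via-contraction under \ref{itm:smallnessOfDerivOfPhi}, the same strong convergence $\alpha_n\to\alpha$ using \ref{itm:compContOfDerivOfPhi}, and the same geometric-series estimate $\norm{o_n(t)}{V}\le\sum_{m}\theta^{n-m}\rho_m(t)$ with $\sup_m t^{-1}\rho_m(t)\to 0$ by compact differentiability on the precompact set $\{\alpha_m\}$. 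Two small remarks: (i) your linear recurrence $\norm{\alpha_n}{V}\le c_1\norm{d}{V^*}+\theta\norm{\alpha_{n-1}}{V}$ does not drop out directly from the equality you obtain---the cross term $C_b\norm{\alpha_n}{V}\norm{\Phi'(u)(\alpha_{n-1})}{V}$ still forces a quadratic recurrence, which the paper handles with Young's inequality (the conclusion, boundedness, is the same); (ii) the coincidence set $\mathcal{A}(u)$ in the critical cone arises because every $q_n$ is differentiated at the fixed base point $u=S(f,u)$, not via any convergence of sets $\{q_{n+1}(t)=\Phi(q_n(t))\}$ as $t\to 0^+$.
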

It is worth noting that under some weaker assumptions than those in Theorem \ref{thm:main} we can show that the \emph{expected} directional derivative of the QVI problem can be approximated by directional derivatives of VI iterates, see Theorem \ref{thm:boundednessOfDirectionalDerivatives} for this. As in \cite[Theorem 3.4]{Mignot}, we can in certain circumstances obtain a regularity result on the smoothness of the directional derivative found in Theorem \ref{thm:main}. We say that \emph{strict complementarity} holds if the set $\mathcal{K}^u$ simplifies to
\begin{equation}\label{eq:strictCompBeforeMainTheorem}
\mathcal K^u(w)=\mathcal S^u(w) := \{ \varphi \in V : \varphi = \Phi'(u)(w) \text{ q.e. on } \mathcal{A}(u) \}.
\end{equation}
If $\Phi'(u) \equiv 0$ (which is the case for VIs), then this condition simply asks for the reduction of the set $\mathcal{K}^u$ to a linear subspace. We discuss complementarity and strict complementarity in more detail in the next section.
Let us also introduce the following extra hypothesis.
\begin{enumerate}[label=(\textbf{B\arabic*})]
\item\label{itm:PhiDiffLinearInDirection} The map $h \mapsto \Phi'(v)(h)$ is linear for each $v$.
\end{enumerate}
\begin{theorem}\label{thm:regularity}
In the context of Theorem \ref{thm:main}, if strict complementarity holds, 
then the derivative $\alpha$  satisfies 
\begin{equation*}\label{eq:QVEforAlpha}
\begin{aligned}
\alpha \in \mathcal{S}^u(\alpha) &: \langle A\alpha - d, \alpha - v \rangle = 0 \quad \forall v \in \mathcal S^u(\alpha).
\end{aligned}
\end{equation*}
In this case, if \ref{itm:PhiDiffLinearInDirection} also holds, $\alpha=\alpha(d)$ satisfies $\alpha(c_1d_1 + c_2d_2) = c_1\alpha(d_1) + c_2\alpha(d_2)$ for positive constants $c_1$ and $c_2$ and non-negative directions $d_1$ and $d_2$ belonging to $V^*$. 
\end{theorem}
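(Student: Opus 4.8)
The plan is to start from the QVI for $\alpha$ established in Theorem \ref{thm:main}, namely $\alpha \in \mathcal{K}^u(\alpha)$ with the variational inequality $\langle A\alpha - d, \alpha - v\rangle \leq 0$ for all $v \in \mathcal{K}^u(\alpha)$, and simply substitute the strict complementarity hypothesis $\mathcal{K}^u(w) = \mathcal{S}^u(w)$. Since $\alpha \in \mathcal{S}^u(\alpha)$ and $\mathcal{S}^u(\alpha)$ is by definition an \emph{affine subspace} of $V$ (the translate of the linear subspace $\{\varphi \in V : \varphi = 0 \text{ q.e. on } \mathcal{A}(u)\}$ by any particular element equal to $\Phi'(u)(\alpha)$ q.e.\ on $\mathcal{A}(u)$), the variational inequality over this set becomes a variational \emph{equation}: for any $v \in \mathcal{S}^u(\alpha)$, both $\alpha + (\alpha - v)$ and $\alpha - (\alpha - v)$ lie in $\mathcal{S}^u(\alpha)$ because $\alpha - v$ belongs to the underlying linear subspace. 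Testing the inequality with $v$ and with $2\alpha - v$ and combining the two yields $\langle A\alpha - d, \alpha - v\rangle = 0$, which is the claimed identity. I would spell out this affineness argument carefully, since it is the only structural point: one must check that for $v, v' \in \mathcal{S}^u(\alpha)$ the difference $v - v'$ vanishes q.e.\ on $\mathcal{A}(u)$, hence $v \pm (v - v') \in \mathcal{S}^u(\alpha)$, so the cone of admissible test directions at $\alpha$ is actually a linear space.

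For the second assertion, assume in addition \ref{itm:PhiDiffLinearInDirection}, so $h \mapsto \Phi'(u)(h)$ is linear. Fix positive constants $c_1, c_2$ and non-negative directions $d_1, d_2 \in V^*$, let $\alpha_i = \alpha(d_i)$ be the corresponding derivatives from Theorem \ref{thm:main}, and set $\beta := c_1\alpha_1 + c_2\alpha_2$ and $d := c_1 d_1 + c_2 d_2$. The strategy is to verify that $\beta$ solves the variational equation associated to $d$, i.e.\ that $\beta \in \mathcal{S}^u(\beta)$ and $\langle A\beta - d, \beta - v\rangle = 0$ for all $v \in \mathcal{S}^u(\beta)$, and then to invoke uniqueness of the solution of that problem (the solution being $\alpha(d)$) to conclude $\beta = \alpha(d)$. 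That $\beta \in \mathcal{S}^u(\beta)$ follows from linearity of $\Phi'(u)$: on $\mathcal{A}(u)$ we have $\alpha_i = \Phi'(u)(\alpha_i)$ q.e., so $\beta = c_1\Phi'(u)(\alpha_1) + c_2\Phi'(u)(\alpha_2) = \Phi'(u)(c_1\alpha_1 + c_2\alpha_2) = \Phi'(u)(\beta)$ q.e.\ on $\mathcal{A}(u)$. For the equation, note that since $\mathcal{S}^u(\beta)$, $\mathcal{S}^u(\alpha_1)$ and $\mathcal{S}^u(\alpha_2)$ all share the same underlying linear subspace $M := \{\varphi \in V : \varphi = 0 \text{ q.e. on }\mathcal{A}(u)\}$ (again using linearity of $\Phi'(u)$, which makes the subspace independent of the direction), a test element $v \in \mathcal{S}^u(\beta)$ can be written $v = \beta + m$ with $m \in M$, and then $\alpha_i - (\alpha_i + m) \in M$ shows the $\alpha_i$-equation can be tested with the same $m$; linearly combining $\langle A\alpha_i - d_i, m\rangle = 0$ with weights $c_i$ gives $\langle A\beta - d, m\rangle = 0$, i.e.\ $\langle A\beta - d, \beta - v\rangle = 0$.

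The remaining ingredient is \textbf{uniqueness} of the solution of the variational equation $\alpha \in \mathcal{S}^u(\alpha)$, $\langle A\alpha - d, \alpha - v\rangle = 0 \ \forall v \in \mathcal{S}^u(\alpha)$: if $\alpha, \tilde\alpha$ are two solutions for the same $d$, then (using linearity of $\Phi'(u)$ so that $\mathcal{S}^u(\alpha)$ and $\mathcal{S}^u(\tilde\alpha)$ have common linear part $M$ and $\alpha - \tilde\alpha \in M$) one tests the $\alpha$-equation with $\tilde\alpha$ and the $\tilde\alpha$-equation with $\alpha$, subtracts, and obtains $\langle A(\alpha - \tilde\alpha), \alpha - \tilde\alpha\rangle = 0$, whence $\alpha = \tilde\alpha$ by coercivity. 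I expect this uniqueness step, together with the bookkeeping that the linear subspace $M$ underlying $\mathcal{S}^u(\cdot)$ is genuinely the same for all directions (which is exactly where \ref{itm:PhiDiffLinearInDirection} is used — without it $\mathcal{S}^u(w)$ depends on $w$ through $\Phi'(u)(w)$ in a nonlinear way and the superposition argument breaks), to be the only slightly delicate part; the rest is a direct substitution and an affine-set-turns-VI-into-equation argument that is entirely routine.
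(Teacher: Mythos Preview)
Your argument for the first assertion (the variational equation) is correct and in fact cleaner than the paper's route: the paper passes to the limit in the variational \emph{equation} for $\delta_n$ (Lemma~\ref{lem:strongConvergenceOfDirDerv}) and then translates to $\alpha$, whereas you simply observe that $\mathcal{S}^u(\alpha)$ is affine and hence the inequality from Theorem~\ref{thm:main} is already an equation. Nothing to add there.

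The linearity argument, however, has a genuine gap in the uniqueness step. You claim that if $\alpha,\tilde\alpha$ both solve the quasi-variational equation then $\alpha-\tilde\alpha\in M=\{\varphi:\varphi=0\text{ q.e.\ on }\mathcal A(u)\}$, and therefore each can be used as a test function in the other's equation. But from $\alpha=\Phi'(u)(\alpha)$ and $\tilde\alpha=\Phi'(u)(\tilde\alpha)$ q.e.\ on $\mathcal A(u)$ together with linearity of $\Phi'(u)$ you only get $(\alpha-\tilde\alpha)=\Phi'(u)(\alpha-\tilde\alpha)$ q.e.\ on $\mathcal A(u)$, which is \emph{not} the same as $\alpha-\tilde\alpha=0$ q.e.\ there; so $\tilde\alpha\notin\mathcal S^u(\alpha)$ in general and you cannot test as stated. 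The argument can be repaired, but it requires the smallness hypothesis \ref{itm:smallnessOfDerivOfPhi} (already in force from Theorem~\ref{thm:main}): writing each solution as $\alpha=\Phi'(u)(\alpha)+w(\alpha)$ with $w(\alpha)\in M$ determined by Lax--Milgram on $M$, the map $\alpha\mapsto\Phi'(u)(\alpha)+w(\alpha)$ is a strict contraction on $V$ since $C_\Phi(1+C_b/C_a)<1$, and uniqueness follows. You should make this explicit.

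For comparison, the paper sidesteps uniqueness entirely: it establishes linearity at the level of the iterates, $\alpha_n(c_1d_1+c_2d_2)=c_1\alpha_n(d_1)+c_2\alpha_n(d_2)$ (Proposition~\ref{prop:relationBetweenQnAndUn}, using that under strict complementarity $\partial S(f,u)$ is G\^ateaux, hence linear, together with \ref{itm:PhiDiffLinearInDirection} and induction on $n$), and then simply passes to the limit $n\to\infty$ using Lemma~\ref{lem:strongConvergenceOfDirDerv}. This avoids any appeal to well-posedness of the limiting quasi-variational equation, at the cost of going back through the approximation scheme.
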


\section{Directional derivative formula for variations in the obstacle}

We first discuss variational inequalities and the directional derivatives associated to their solution mappings. Given data $f \in V^*$ and obstacle $\psi \in V$, consider the VI
\begin{equation}\label{eq:firstVI}
y \in \mathbf{K}(\psi): \quad \langle Ay-f, y -v \rangle \leq 0\quad\forall v \in \mathbf{K}(\psi).
\end{equation}
Define its solution mapping $S\colon V^* \times V \to V$ by $S(f,\psi)=y$; this is indeed well defined due to the Lions--Stampacchia theorem, see \cite[\S4.3]{Rodrigues} or \cite{Kinderlehrer} for example.  Since we are working with QVIs which by definition involve \emph{a priori} unknown obstacles, it becomes useful to be able to relate the problem \eqref{eq:firstVI} to a VI problem with zero obstacle but an extra source term. To achieve this, we make the change of variables 
\[\hat y := \Phi(\psi)-y,\quad\text{}\quad \hat v := \Phi(\psi) - v\]
(which is why the range of $\Phi$ must be in $V$) and reformulate \eqref{eq:firstVI} on the set $\mathbf{K}_0 := \{ v \in V : v \geq 0 \text{ a.e}\}$ as follows:
\begin{align}
\hat y \in\; &\mathbf{K}_0: \quad \langle A\hat y + f- A\Phi(\psi), \hat y - \hat v \rangle \leq 0\quad\forall v \in \mathbf{K}_0.\label{eq:1}
\end{align}
As a matter of notation, denote by $S_0\colon V^* \to V$ the solution mapping with $z=S_0(g)$ the solution of the following VI:
\begin{equation}\label{eq:S0VI}
z \in \mathbf{K}_0 : \quad \langle Az - g, z-v \rangle \leq 0\quad \forall v \in \mathbf{K}_0.
\end{equation}
Hence the solution of \eqref{eq:1} is $\hat y = S_0(A\Phi(\psi)-f)$, and by the definition of $\hat y$ we obtain the important formula relating $S$ and $S_0$:
\begin{equation}\label{eq:identitfySandS0}
S(f,\psi) = \Phi(\psi) - S_0(A\Phi(\psi)-f).
\end{equation}
Due to Mignot \cite{Mignot}, the mapping $S_0$ (and more generally solution mappings of VIs with non-zero but fixed obstacles) possesses a \emph{conical derivative} $S_0'(g)(d) \in V$ that satisfies
\begin{equation}\label{eq:mignot}
S_0(g+td) = S_0(g) + tS_0'(g)(d) + o(t,d,g)
\end{equation}
where the remainder term $o$ is such that $t^{-1}o(t,d,g) \to 0$ as $t \to 0^+$. The terminology \emph{conical} derivative refers to the directional derivative being positively homogeneous with respect to the direction and the associated limit that defines the derivative is taken along the positive half-line $t > 0$. 
This limit is uniform in $d$ on the compact subsets of $V^*$ (a fact of great utility later), and the derivative $\gamma := S_0'(g)(d)$ solves the VI \cite[Theorem 3.3]{Mignot}
\begin{equation}\label{eq:VIForS0}
\begin{aligned}
\gamma \in \mathcal{K}^z_0  &: \quad \langle A\gamma -d, \gamma - v \rangle \leq 0 \quad \forall v \in \mathcal{K}^z_0\\
\mathcal{K}^z_0 &:= \{ w \in V : w \geq 0 \text{ q.e. on } \mathcal{Z}(z) \text{ and } \langle Az-g, w \rangle = 0\},\quad z := S_0(g).
\end{aligned}
\end{equation}
Here the set $\mathcal{K}^z$ is well known in variational and convex analysis as the \emph{critical cone}. Since $S_0$ is Lipschitz, it is in fact Hadamard differentiable. This implies that if $d(t) \to d$ in $V^*$, then
\begin{equation}\label{eq:hadamardFormula}
S_0(g+td(t)) = S_0(g) + tS_0'(g)(d) + \hat o(t,d, d(t), g)
\end{equation}
holds with $t^{-1}\hat o(t,d,d(t),g) \to 0$ as $t \to 0^+$. 

Returning to the map $S$, we see that
\begin{align*}
S(f+td, \psi) -S(f,\psi) &= S_0(A\Phi(\psi)-f) - S_0(A\Phi(\psi)-f-td)\\
&= -tS_0'(A\Phi(\psi)-f)(-d) - o(t,-d,A\Phi(\psi)-f)
\end{align*}
so that
\begin{equation}\label{eq:relationS0AndSDerivatives}
\partial S(f,\psi)(d) = -S_0'(A\Phi(\psi)-f)(-d)
\end{equation}
is the directional derivative with respect to the source term of the solution mapping $S$ associated to the VI \eqref{eq:firstVI}. From \eqref{eq:VIForS0}, we see that $\delta:=\partial S(f,\psi)(d)$ satisfies
\begin{equation}\label{eq:VIForS}
\begin{aligned}
\delta \in \mathcal{K}^z  &: \quad \langle A\delta -d, \delta- v \rangle \leq 0 \quad \forall v \in \mathcal{K}^z\\
\mathcal{K}^z &:= \{ w \in V : w \leq 0 \text{ q.e. on } \mathcal{Z}(z-\Phi(\psi) ) \text{ and } \langle Az-f, w \rangle = 0\},\\
z &:= S(f, \psi).
\end{aligned}
\end{equation}
\begin{remark}An alternate characterisation of the critical cone in \eqref{eq:VIForS} in terms of q.e. statements was given by Wachsmuth in \cite[Lemmas 3.1 and A.5]{Wachsmuth}; namely the orthogonality condition in the critical cone above may be replaced by a quasi-everywhere equality on a subset of the active set which can be related to the notion of the fine support of $Az-f$. See \cite[Appendix A]{Wachsmuth} and also \cite{HarderWachsmuth} for more details.
\end{remark}
\begin{remark}
One may ask why we did not use \eqref{eq:identitfySandS0} to rewrite the QVI problem for $u$ directly in terms of $S_0$ and apply Mignot's theory without recourse to the iteration method that we will employ. Indeed, by \eqref{eq:identitfySandS0} we can write $u \in \mathbf{Q}(f)$ as $u=\Phi(u)-\hat u$ with $\hat u:= S_0(A\Phi(u)-f)$. Thus, since $u = (\Phi-I)^{-1}\hat u$, we have $\hat u = S_0(A\Phi(\Phi-I)^{-1}\hat u - f)$ which satisfies the VI
\[\hat u \in \mathbf{K}_0: \quad \langle A\hat u - A\Phi(\Phi-I)^{-1}\hat u + f, \hat u - \varphi \rangle \leq 0 \quad \forall \varphi \in \mathbf{K}_0.\]
Setting $\hat A:=A-A\Phi(\Phi-I)^{-1}$ and $\hat f =-f$, this reads
\[\hat u \in \mathbf{K}_0: \quad \langle \hat A\hat u -\hat f, \hat u - \varphi \rangle \leq 0 \quad \forall \varphi \in \mathbf{K}_0.\]
In general, the theory of Mignot cannot be applied to the solution mapping of this VI since $\hat A$ may not be linear (although see \cite{Levy} for a directional differentiability theory for nonlinear operators), nor coercive, nor T-monotone. However, let us consider a linear $\Phi$ that satisfies this property with $\norm{\Phi}{} < 1$ so that the Neumann series expansion is available. Then
\begin{align*}
\langle \hat Au, u \rangle &\geq C_a\norm{u}{V}^2 - \langle A\Phi(\Phi-I)^{-1}u, u \rangle\\
&\geq C_a\norm{u}{V}^2 -  C_b\norm{\Phi(\Phi-I)^{-1}u}{V}\norm{u}{V}\\
&\geq C_a\norm{u}{V}^2 -  C_b\norm{\Phi}{}\norm{(\Phi-I)^{-1}}{}\norm{u}{V}^2\\
&= C_a\norm{u}{V}^2 -  \frac{C_b\norm{\Phi}{}}{1-\norm{\Phi}{}}\norm{u}{V}^2\\
&=\frac{C_a-(C_a+C_b)\norm{\Phi}{}}{1-\norm{\Phi}{}}\norm{u}{V}^2
\end{align*}
so coercivity is achieved when $\Lip{\Phi} < C_a\slash (C_a+C_b)$ which puts us in the regime of unique solutions (due to \eqref{eq:ctsDependence} below), which is in agreement with our main theorem in this particular case.
\end{remark}
The VI \eqref{eq:S0VI} is equivalent to the following \emph{complementarity problem} \cite[\S 4.5, Proposition 5.6]{Rodrigues}:
\[z \in \mathbf{K}_0, \quad Az-g \geq 0 \text{ in $V^*$}, \quad \langle Az-g, z \rangle = 0.\]
Formally, complementarity refers to the idea that one or both of $Az-g$ and $z$ must vanish at any given point in $\Omega$ (i.e., they cannot simultaneously be strictly positive) since they are both non-negative and their (duality) product vanishes. Heuristically, we say that \emph{strict} complementarity holds if only one of these functions vanish at any given point, i.e., the set $\{Az-g=0\}\cap\{z=0\}$ (known as the \emph{biactive set}) is empty. For a discussion on how concepts such as the biactive set can be defined in the absence of sufficient regularity for $Az$ and $g$, see for example \cite{MR3307833,Gaevskaya2013}.

We, however, dispense with these notions and say that $S_0(g)=z$ satisfies \emph{strict complementarity} if the critical cone can be written as the linear subspace
\begin{equation}\label{eq:blah11}
\mathcal{K}^z_0 = \mathcal S^z_0 := \{ w \in V : w = 0 \text{ q.e. on } \mathcal{Z}(z)\}
\end{equation}
(this definition was used in \cite{Bonnans}). In this case, $S_0'(g)(d)$ is in fact a G\^{a}teaux derivative \cite[Theorem 3.4]{Mignot} \cite[\S 6.4, Corollary 6.60]{Bonnans}, i.e., it is linear with respect to the direction, and in lieu of \eqref{eq:VIForS0} it satisfies the weak formulation
\begin{equation*}\label{eq:PDEForS0}
\begin{aligned}
\gamma \in \mathcal S^z_0  &: \quad \langle A\gamma -d, \gamma - v \rangle = 0 \quad \forall v \in \mathcal S^z_0.
\end{aligned}
\end{equation*}
More generally (now for a VI with a non-trivial obstacle), we say that $S(f,\psi)=z$ satisfies \emph{strict complementarity} if the critical cone $\mathcal{K}^z$ can be written as
\[\mathcal{K}^z = \mathcal{S}^z := \{ w \in V : w = 0 \text{ q.e. on } \mathcal{Z}(z-\Phi(\psi) )\}.
\]
In Proposition \ref{prop:formulaForSDiff} we will introduce a further notion, that of strict complementarity with respect to a point in $V$, which can be thought of as a translation of the previous strict complementarity definition.

Checking that strict complementarity holds typically more requires regularity on the solution.

Returning to \eqref{eq:hadamardFormula}, we see by the next lemma that the convergence of the term $t^{-1}\hat o(t,d,d(t),g)$ is uniform with respect to $d$ on the compact subsets of $V^*$ in certain cases.
\begin{lemma}[Estimate on the higher order term of $S_0$]\label{lem:uniformConvergenceOfhatO}Let $b\colon (0,T) \to V^*$ satisfy 
 $t^{-1}b(t) \to 0$ as $t \to 0^+$. Then
\begin{equation*}
\frac{1}{t}\norm{\hat o(t,d, d + t^{-1}b(t),g)}{V} \leq \frac{1}{C_a}\frac{\norm{b(t)}{V^*}}{t} + \frac{\norm{o(t,d,g)}{V}}{t}\label{eq:oBound}
\end{equation*}
and thus $t^{-1}\hat o(t,d,d + t^{-1}b(t),g) \to 0$ in $V$ as $t \to 0^+$ (uniformly in $d$ on compact subsets provided $t^{-1}b(t) \to 0$ uniformly in $d$ on compact subsets).
\end{lemma}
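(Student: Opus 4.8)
The plan is to compare the two expansions of $S_0$ along the perturbed direction. Applying the Hadamard formula \eqref{eq:hadamardFormula} with the perturbed direction $d(t) := d + t^{-1}b(t)$, we have
\[
S_0(g + t\,d(t)) = S_0(g) + tS_0'(g)(d) + \hat o(t, d, d + t^{-1}b(t), g),
\]
but notice that $t\,d(t) = td + b(t)$, so the left-hand side is also just $S_0(g + td + b(t))$. On the other hand, the pure directional expansion \eqref{eq:mignot} gives
\[
S_0(g + td) = S_0(g) + tS_0'(g)(d) + o(t,d,g).
\]
Subtracting, $\hat o(t,d,d+t^{-1}b(t),g) = \big(S_0(g+td+b(t)) - S_0(g+td)\big) + o(t,d,g)$. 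Thus the whole problem reduces to controlling the difference $S_0(g+td+b(t)) - S_0(g+td)$ in terms of $\norm{b(t)}{V^*}$, uniformly in the base point $g+td$.

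The key step is therefore a Lipschitz estimate for $S_0$ with respect to its source argument: I claim $\norm{S_0(g_1) - S_0(g_2)}{V} \le C_a^{-1}\norm{g_1 - g_2}{V^*}$ for all $g_1, g_2 \in V^*$. This is the standard continuous-dependence estimate for the VI \eqref{eq:S0VI}: writing $z_i = S_0(g_i)$, test the VI for $z_1$ with $v = z_2$ and the VI for $z_2$ with $v = z_1$ (both are admissible competitors since $\mathbf{K}_0$ does not depend on the source), add the two inequalities to get $\langle Az_1 - Az_2, z_1 - z_2\rangle \le \langle g_1 - g_2, z_1 - z_2\rangle$, then use coercivity on the left and Cauchy--Schwarz on the right to conclude $C_a\norm{z_1-z_2}{V}^2 \le \norm{g_1-g_2}{V^*}\norm{z_1-z_2}{V}$. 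Applying this with $g_1 = g+td+b(t)$ and $g_2 = g+td$ gives $\norm{S_0(g+td+b(t)) - S_0(g+td)}{V} \le C_a^{-1}\norm{b(t)}{V^*}$, and combining with the identity above and the triangle inequality yields exactly the stated bound
\[
\frac{1}{t}\norm{\hat o(t,d,d+t^{-1}b(t),g)}{V} \le \frac{1}{C_a}\frac{\norm{b(t)}{V^*}}{t} + \frac{\norm{o(t,d,g)}{V}}{t}.
\]

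For the convergence conclusion: the first term on the right tends to $0$ by the hypothesis $t^{-1}b(t) \to 0$ in $V^*$, and the second tends to $0$ because $o(t,d,g)$ is the higher-order remainder in Mignot's expansion \eqref{eq:mignot}. For the uniformity claim, recall from the discussion after \eqref{eq:mignot} that $t^{-1}o(t,d,g) \to 0$ uniformly in $d$ on compact subsets of $V^*$; so if additionally $t^{-1}b(t) \to 0$ uniformly in $d$ on compact subsets, both terms vanish uniformly and hence so does $t^{-1}\hat o$.

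I do not expect any genuine obstacle here — the only content is the continuous-dependence estimate, which is routine — but the one point to be careful about is the bookkeeping of which "$o$" is which: the remainder $\hat o$ is defined relative to the \emph{fixed} direction $d$ (its derivative term is $tS_0'(g)(d)$, not $tS_0'(g)(d(t))$), so the cancellation of the $tS_0'(g)(d)$ terms is exact and no extra error from differentiating in a moving direction appears. That is precisely what makes the clean bound possible.
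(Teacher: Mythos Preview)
Your proof is correct and follows essentially the same approach as the paper: subtract the two expansions \eqref{eq:mignot} and \eqref{eq:hadamardFormula}, use the Lipschitz property of $S_0$ with constant $C_a^{-1}$, and apply the triangle inequality. The paper is terser (it simply cites that $S_0$ is Lipschitz and invokes the reverse triangle inequality on $\norm{\hat o - o}{V}$), whereas you spell out the continuous-dependence estimate explicitly and apply the ordinary triangle inequality to the identity $\hat o = \big(S_0(g+td+b(t)) - S_0(g+td)\big) + o$, but the content is identical.
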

\begin{proof}
Subtracting \eqref{eq:mignot} from \eqref{eq:hadamardFormula}, we obtain, since $S_0$ is Lipschitz,
\begin{align*}
\norm{\hat o(t,d, d + t^{-1}b(t),g) - o(t,d,g)}{V} &= \norm{S_0(g+t(d + t^{-1}b(t)))-S_0(g+td)}{V}\\
&\leq C_a^{-1}\norm{b(t)}{V^*}
\end{align*}
and this leads to the desired result after an application of the reverse triangle inequality.
\end{proof}
The next result records the higher order behaviour of the term $\hat l$ and is similar to Lemma \ref{lem:uniformConvergenceOfhatO} except we use a mean value theorem on Banach spaces \cite[\S2, Proposition 2.29]{Penot} on $\Phi$ instead of the Lipschitz property. 
\begin{lemma}[Estimate on the higher order term of $\Phi$]\label{lem:uniformConvergenceOfL}
Let \ref{itm:PhiDiff} hold and let $b\colon (0,T) \to V$ satisfy $t^{-1}b(t) \to 0$ as $t \to 0^+$. Then
\begin{equation}\label{eq:lBound}
\frac{1}{t}\norm{\hat l(t,h, h + t^{-1}b(t),v)}{V} \leq \sup_{\lambda \in [0,1]}\frac{\norm{\Phi'(v+th + \lambda b(t))b(t)}{V}}{t} + \frac{\norm{l(t,h,v)}{V}}{t}
\end{equation}
\end{lemma}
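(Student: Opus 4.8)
\textbf{Proof plan for Lemma \ref{lem:uniformConvergenceOfL}.}
The plan is to mimic the proof of Lemma \ref{lem:uniformConvergenceOfhatO}, replacing the Lipschitz estimate for $S_0$ by a mean value inequality for the (merely directionally differentiable) map $\Phi$. First I would write down the two expansions coming from \ref{itm:PhiDiff}: from \eqref{eq:derivativeOfPhi} applied with the constant direction $h$,
\[
\Phi(v+th) = \Phi(v) + t\Phi'(v)(h) + l(t,h,v),
\]
and applied with the perturbed direction $h(t) := h + t^{-1}b(t)$ (which indeed tends to $h$ since $t^{-1}b(t) \to 0$),
\[
\Phi(v+th+b(t)) = \Phi(v) + t\Phi'(v)(h) + \hat l(t,h,h+t^{-1}b(t),v).
\]
Subtracting these gives
\[
\hat l(t,h,h+t^{-1}b(t),v) - l(t,h,v) = \Phi(v+th+b(t)) - \Phi(v+th),
\]
so that, by the reverse triangle inequality, it remains to bound $\norm{\Phi(v+th+b(t)) - \Phi(v+th)}{V}$ by $\sup_{\lambda\in[0,1]}\norm{\Phi'(v+th+\lambda b(t))b(t)}{V}$.

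The key step is the mean value inequality on Banach spaces. Since $\Phi$ is Hadamard directionally differentiable, the scalar function $\lambda \mapsto \Phi(v+th+\lambda b(t))$ on $[0,1]$ has directional derivatives, and by \cite[\S2, Proposition 2.29]{Penot} one gets
\[
\norm{\Phi(v+th+b(t)) - \Phi(v+th)}{V} \leq \sup_{\lambda \in (0,1)} \norm{\Phi'(v+th+\lambda b(t))(b(t))}{V},
\]
where I use that the directional derivative of $\lambda \mapsto \Phi(v+th+\lambda b(t))$ at $\lambda$ in the direction $+1$ is precisely $\Phi'(v+th+\lambda b(t))(b(t))$ by positive homogeneity of $\Phi'(\cdot)(\cdot)$ in the direction. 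Dividing through by $t$ and combining with the subtraction identity yields exactly \eqref{eq:lBound}.

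The main obstacle is checking that the hypotheses of the Banach-space mean value theorem in \cite{Penot} are genuinely met here: that proposition typically requires the map to be (Hadamard or Dini) directionally differentiable along the segment and the directional derivative to be measurable/bounded on the segment, plus possibly upper semicontinuity of $\lambda \mapsto \norm{\Phi'(\cdot)(b(t))}{V}$ so that the supremum is attained or at least controls the increment. I would verify this using \ref{itm:PhiDiff} (Hadamard differentiability gives continuity of the directional derivative jointly in base point and direction along the compact segment $[0,1]$), which is enough to apply the cited result; a minor care point is that the supremum should be over the closed interval $[0,1]$ as written in \eqref{eq:lBound}, which is harmless since enlarging the index set only increases the supremum. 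Everything else — the algebraic subtraction, the reverse triangle inequality, and the observation that $t^{-1}b(t)\to 0$ makes $h(t)\to h$ admissible in \eqref{eq:derivativeOfPhi} — is routine.
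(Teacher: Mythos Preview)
Your proposal is correct and follows essentially the same approach as the paper: subtract the two expansions from \eqref{eq:derivativeOfPhi} to obtain $\hat l - l = \Phi(v+th+b(t)) - \Phi(v+th)$, apply the mean value inequality \cite[\S2, Proposition 2.29]{Penot} to bound this difference by $\sup_{\lambda\in[0,1]}\norm{\Phi'(v+th+\lambda b(t))b(t)}{V}$, and finish with the triangle inequality. Your additional discussion of the hypotheses needed for the mean value theorem is more careful than the paper's own proof, which simply invokes the result.
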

\begin{proof}
This follows from \eqref{eq:derivativeOfPhi} and the mean value theorem:
\begin{align*}
\norm{\hat l(t,h, h + t^{-1}b(t),v) - l(t,h,v)}{V} &= \norm{\Phi(v+t(h + t^{-1}b(t)))-\Phi(v+th)}{V}\\
&\leq \sup_{\lambda \in [0,1]}\norm{\Phi'(v+th + \lambda b(t))b(t)}{V}.
\end{align*}
\end{proof}

It is important to bear in mind that the higher order terms in the expansion formulae \eqref{eq:mignot} and \eqref{eq:derivativeOfPhi} for $S_0$ and $\Phi$ depend on the base points (respectively $g$ and $v$) too. See Remark \ref{rem:LOTsDependOnBasePoint} where we explain why this matters later on.

Two solutions $u_1=S(f_1,u_1)$ and $u_2=S(f_2,u_2)$ of the QVI with right hand sides $f_1$ and $f_2$ satisfy the following estimate (thanks to \eqref{eq:identitfySandS0})
\begin{align}
\nonumber \norm{u_1-u_2}{V} &\leq \norm{\Phi(u_1) - \Phi(u_2)}{V} + \norm{S_0(A\Phi(u_1)-f_1) - S_0(A\Phi(u_2)-f_2)}{V}\\
\nonumber &\leq \norm{\Phi(u_1) - \Phi(u_2)}{V} + C_a^{-1}\norm{A\Phi(u_1)-A\Phi(u_2) + f_2 - f_1}{V^*}\\
\nonumber &\leq (1+C_bC_a^{-1})\norm{\Phi(u_1) - \Phi(u_2)}{V} + C_a^{-1}\norm{f_1 - f_2}{V^*}\\
&\leq 
(1+C_bC_a^{-1})\sup_{\lambda \in (0,1)}\norm{\Phi'(\lambda u_1 + (1-\lambda)u_2)(u_1-u_2)}{V}+  C_a^{-1}\norm{f_1 - f_2}{V^*}
\label{eq:ctsDependence}
\end{align}
for some $\lambda \in (0,1)$ by the mean value theorem. If $\Phi$ is for example G\^{a}teaux differentiable then this implies
\begin{align*}
\norm{u_1-u_2}{V} &\leq (1+C_bC_a^{-1})\sup_{\lambda \in (0,1)}\norm{\Phi'(\lambda u_1 + (1-\lambda)u_2)}{op}\norm{u_1-u_2}{V}+  C_a^{-1}\norm{f_1 - f_2}{V^*}.
\end{align*}
The operator norm on the right hand side of the above can be replaced with the Lipschitz constant of $\Phi$ if $\Phi$ is Lipschitz. This shows that if the Lipschitz constant of $\Phi$ or the operator norm of $\Phi'$ is small enough, then solutions to the QVI are unique. Under these smallness conditions, the above estimate yields
\[\frac{\norm{\mathbf{Q}(f+td)-\mathbf{Q}(f)}{V}}{t} \leq C\norm{d}{V^*}\]
i.e., the difference quotient is bounded. Hence one may try to show the strong convergence in the limit $t \to 0$ of the difference quotient in order to show the existence of the directional derivative. This would require careful analysis of moving sets and cones and is a possible alternative approach to what we do here.

The next result gives a directional differentiability result for the solution mapping $S$ of the VI \eqref{eq:firstVI} with respect to the right hand side source term \emph{and also} the obstacle. It is clear then that information about the behaviour of $\Phi$ with respect to perturbations in the argument is needed for this. A related result can be found in the work of Dentcheva \cite{Dentcheva2001} in which the directional differentiability of metric projections onto moving convex subsets is studied under some assumptions on the differentiability of the constraint set mapping, in a general normed space setting. For stability and continuity results with respect to variations in the obstacle, see \cite{Mosco,MR653202,Toyoizumi1991,Rodrigues}.
\begin{proposition}\label{prop:formulaForSDiff}
Let \ref{itm:PhiDiff} hold. Let $f, d \in V^*$, $v,b \in V$ and $h\colon (0,T) \to V$. 

\noindent(1) For $t > 0$, the expansion formula 
\[S(f+td, v + tb + h(t)) = S(f,v) + tS'(f,v)(d,b) + r(t,b,h(t),v)\]
holds where 
\begin{align*}
&S'(f,v)(d,b):= \Phi'(v)(b) + \partial S(f,v)(d-A\Phi'(v)(b))\\
&r(t,b,h,v) := \hat l(t,b,b+t^{-1}h,v) -\hat o(t,A\Phi'(v)(b)-d, A\Phi'(v)(b)- d + At^{-1}l(t,b,b+t^{-1}h,v), A\Phi(v)-f )
\end{align*}
and $\alpha(d,b):=S'(f,v)(d,b)$ is positive homogeneous in the sense that $\alpha(kd,kb) = k\alpha(d,b)$ for any $k  > 0$, and it satisfies the VI
\begin{align*}
\alpha \in \mathcal{K}^z(b) &: \langle A\alpha -d, \alpha-\varphi \rangle \leq 0 \quad \forall \varphi \in \mathcal{K}^z(b)\\
\mathcal{K}^z(b) &:= \{ w \in V : w \leq \Phi'(v)(b) \text{ q.e. on } \mathcal{Z}(z-\Phi(v))\text{ and } \langle Az-f, w - \Phi'(v)(b) \rangle = 0\}\\
z&:= S(f,v).
\end{align*}
(2) If $t^{-1}h(t) \to 0$ as $t \to 0^+$ and \ref{item:assPhiDerivativeBounded} holds, the remainder term $r$ satisfies
\[\frac{r(t,b,h(t),v)}{t} \to 0 \quad\text{as $t \to 0^+$ uniformly in $d$ on the compact subsets of $V^*$}\]
and thus $S$ is conically differentiable with derivative $S'(f,v)(d,b).$ If \ref{item:assPhiDerivativeBounded} holds uniformly with respect to $b$ in the compact subsets of $V$, then the above stated convergence of $r$ is also uniform with respect to $b$ in the compact subsets of $V$.

\noindent (3) We say that $z=S(f,v)$ satisfies \emph{strict complementarity with respect to $b$} if
\[\mathcal K^z(b) = \mathcal S^z(b) := \{ w \in V : w = \Phi'(v)(b) \text{ q.e. on } \mathcal{Z}(z-\Phi(v))\}\]
holds, and in this case 
$\alpha$ satisfies
\begin{align*}
\alpha \in \mathcal S^z(b) &: \langle A\alpha -d, \alpha-\varphi \rangle =0 \quad \forall \varphi \in \mathcal S^z(b),
\end{align*}
and if \ref{itm:PhiDiffLinearInDirection} also holds, $(d,b) \mapsto \alpha(d,b)$ is linear and hence $\alpha=S'(f,v)(d,b)$ is a G\^{a}teaux derivative.
\end{proposition}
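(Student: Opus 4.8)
The plan is to build everything on top of Mignot's formula \eqref{eq:mignot}--\eqref{eq:hadamardFormula} for $S_0$, the identity \eqref{eq:identitfySandS0} relating $S$ and $S_0$, and the Hadamard expansion \eqref{eq:derivativeOfPhi} for $\Phi$. For part (1), I would first substitute the three perturbations $f \mapsto f+td$, $v \mapsto v + tb + h(t)$ into \eqref{eq:identitfySandS0}, writing
\[
S(f+td, v+tb+h(t)) = \Phi(v+tb+h(t)) - S_0\bigl(A\Phi(v+tb+h(t)) - f - td\bigr).
\]
Now expand $\Phi(v+tb+h(t)) = \Phi(v) + t\Phi'(v)(b) + \hat l(t,b,b+t^{-1}h(t),v)$ using \eqref{eq:derivativeOfPhi} with the perturbed direction $h' = b + t^{-1}h(t)$. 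Feeding this into the argument of $S_0$, the argument becomes $A\Phi(v) - f + t\bigl(A\Phi'(v)(b) - d\bigr) + A\hat l(\cdots)$, so I apply the Hadamard formula \eqref{eq:hadamardFormula} for $S_0$ at base point $g = A\Phi(v)-f$, direction $A\Phi'(v)(b)-d$, with perturbed direction shifted by $A t^{-1}\hat l(\cdots)$ (which is $o(1)$ by \ref{itm:PhiDiff}). Collecting the $O(1)$, $O(t)$, and remainder terms and recalling that $\partial S(f,v)(d) = -S_0'(A\Phi(v)-f)(-d)$ from \eqref{eq:relationS0AndSDerivatives}, one reads off $S'(f,v)(d,b) = \Phi'(v)(b) + \partial S(f,v)(d - A\Phi'(v)(b))$ and the stated form of $r$. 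Positive homogeneity of $\alpha(d,b)$ follows since $\Phi'(v)$ is positively homogeneous in its direction (Hadamard derivatives are) and $\partial S(f,v)(\cdot)$ is positively homogeneous by Mignot's conical differentiability. The VI characterisation of $\alpha$ is obtained by taking the VI \eqref{eq:VIForS} (equivalently \eqref{eq:VIForS0}) satisfied by $\partial S(f,v)(d - A\Phi'(v)(b)) = \alpha - \Phi'(v)(b)$: the constraint $w \le 0$ q.e. on $\mathcal Z(z-\Phi(v))$ and $\langle Az - f, w\rangle = 0$ on the shifted variable $\alpha - \Phi'(v)(b)$ translates directly into the definition of $\mathcal K^z(b)$, and the inequality $\langle A(\alpha - \Phi'(v)(b)) - (d - A\Phi'(v)(b)), (\alpha-\Phi'(v)(b)) - \tilde v\rangle \le 0$ becomes $\langle A\alpha - d, \alpha - \varphi\rangle \le 0$ after the substitution $\varphi = \tilde v + \Phi'(v)(b)$.

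For part (2), with $t^{-1}h(t) \to 0$, I combine Lemma \ref{lem:uniformConvergenceOfL} (applied with $b(t) = h(t)$) to get $t^{-1}\hat l(t,b,b+t^{-1}h(t),v) \to 0$ — here \ref{item:assPhiDerivativeBounded} is exactly what kills the $\sup_\lambda \norm{\Phi'(v+tb+\lambda h(t))h(t)}{V}/t$ term, and compact differentiability of $\Phi$ handles $l(t,b,v)/t \to 0$ uniformly in $b$ on compacta — and then Lemma \ref{lem:uniformConvergenceOfhatO} (applied with the inner shift $b(t) = At^{-1}\cdot t \cdot(\text{the }\hat l\text{ term})$, i.e. the perturbation fed to $S_0$) to conclude $t^{-1}\hat o(\cdots) \to 0$; the uniformity in $d$ on compact subsets of $V^*$ is inherited from the uniformity in Mignot's formula \eqref{eq:mignot}. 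Uniformity in $b$ on compacta follows provided \ref{item:assPhiDerivativeBounded} is uniform in $b$, since then both the $\hat l$ estimate and (through the bound in Lemma \ref{lem:uniformConvergenceOfhatO}) the $\hat o$ estimate are uniform.

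For part (3), under strict complementarity with respect to $b$ the set $\mathcal K^z(b)$ becomes the affine subspace $\mathcal S^z(b) = \Phi'(v)(b) + \mathcal S^z$ where $\mathcal S^z$ is the linear space $\{w : w = 0 \text{ q.e. on }\mathcal Z(z-\Phi(v))\}$. A VI over an affine subspace is a variational equation: testing the VI with $\varphi = \alpha \pm (\varphi_0 - \Phi'(v)(b)) + \Phi'(v)(b)$ for arbitrary $\varphi_0 \in \mathcal S^z(b)$ (legitimate since $\mathcal S^z$ is a subspace) forces $\langle A\alpha - d, \alpha - \varphi\rangle = 0$ for all $\varphi \in \mathcal S^z(b)$. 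If in addition \ref{itm:PhiDiffLinearInDirection} holds, then $b \mapsto \Phi'(v)(b)$ is linear, so $\mathcal S^z(b)$ depends linearly on $b$, and the variational equation defining $\alpha$ is linear in the data $(d, A\Phi'(v)(b))$; standard uniqueness (coercivity of $A$) then gives $(d,b) \mapsto \alpha(d,b)$ linear, i.e. a Gâteaux derivative.

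The main obstacle I anticipate is the bookkeeping in part (1): the remainder $r$ is a composition of two nested higher-order terms — the $\hat l$ term from $\Phi$ appears \emph{inside} the perturbed direction of the $\hat o$ term from $S_0$ — so one must be careful that the argument structure of Lemma \ref{lem:uniformConvergenceOfhatO} (which expects a perturbation of the form $d + t^{-1}b(t)$) matches what actually arises, namely the shift $A t^{-1}l(t,b,b+t^{-1}h,v)$ plays the role of $t^{-1}b(t)$ there, and verifying that this genuinely is $o(1)$ after division by $t$ requires \ref{itm:PhiDiff} together with the argument of part (2). Getting the VI for $\alpha$ right — in particular tracking how the q.e. constraint and the complementarity equality transform under the affine shift by $\Phi'(v)(b)$ — is the other place where care is needed, though it is conceptually routine once the shift $\alpha = \Phi'(v)(b) + \partial S(f,v)(d-A\Phi'(v)(b))$ is identified.
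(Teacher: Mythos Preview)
Your proposal is correct and follows essentially the same route as the paper: you use the identity \eqref{eq:identitfySandS0} to write $S(f+td,v+tb+h(t))$ in terms of $S_0$, expand $\Phi$ via \eqref{eq:derivativeOfPhi} and $S_0$ via \eqref{eq:hadamardFormula}, collect terms using \eqref{eq:relationS0AndSDerivatives}, and derive the VI for $\alpha$ by the affine shift $\varphi = \tilde v + \Phi'(v)(b)$ applied to the VI \eqref{eq:VIForS} for $\delta$; parts (2) and (3) likewise match the paper's use of Lemmas \ref{lem:uniformConvergenceOfhatO}--\ref{lem:uniformConvergenceOfL} and the strict-complementarity reduction. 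The bookkeeping concern you flag about the nested remainders is exactly the point where the paper also pauses to chain the two lemmas carefully.
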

\begin{proof}
(1) The left hand side of the expansion formula to be proved is
\begin{equation}\label{eq:0}
S(f+td, v + tb + h(t)) = \Phi(v+tb+h(t)) - S_0(A(\Phi(v + tb + h(t)))-f-td)
\end{equation}
The first term on the right hand side can be written using the expansion formula \eqref{eq:derivativeOfPhi} for $\Phi$:
\begin{equation}
\Phi(v+tb+h(t)) = \Phi(v) + t\Phi'(v)(b) + \hat l(t,b, b + t^{-1}h(t), v)\label{eq:2},
\end{equation}
and using this and the expansion formula \eqref{eq:hadamardFormula} for $S_0$, we can write the second term on the right hand side as
\begin{align}
\nonumber S_0(A(\Phi(v + tb + h(t)))-f-td) &= S_0(A\Phi(v) - f + t(A\Phi'(v)(b) - d + t^{-1}A\hat l(t,b,b+t^{-1}h(t),v)))\\
\nonumber  &= S_0(A\Phi(v) - f) + tS_0'(A\Phi(v) - f)[A\Phi'(v)(b) - d]\\
&\quad  + \hat o (t,A\Phi'(v)(b) - d, A\Phi'(v)(b) - d + t^{-1}A\hat l(t,b,b+t^{-1}h(t),v), A\Phi(v)-f)\label{eq:3}
\end{align}
Now, plugging \eqref{eq:2} and \eqref{eq:3} into \eqref{eq:0} and using 
\[S_0'(A\Phi(v)-f)[A\Phi'(v)(b)-d]=-\partial S(f,v)(d-A\Phi'(v)(b))\] (which is the relation \eqref{eq:relationS0AndSDerivatives} between the directional derivatives of $S_0$ and $S$), we find that \eqref{eq:0} becomes
\begin{align*}
S(f+td, v + tb + h(t))
&= \Phi(v)+t\Phi'(v)(b)+\hat l(t,b,b+t^{-1}h(t),v) - S_0(A\Phi(v) - f)\\
&\quad  - tS_0'(A\Phi(v) - f)[A\Phi'(v)(b) - d]\\
&\quad-\hat o(t,A\Phi'(v)(b)-d, A\Phi'(v)(b)- d + t^{-1}A\hat l(t,b,b+t^{-1}h(t),v), A\Phi(v)-f)\\
&= S(f,v) +t(\Phi'(v)(b)+\partial S(f,v)(d-A\Phi'(v)(b)))+\hat l(t,b,b+t^{-1}h(t),v)\\
&\quad -\hat o(t,A\Phi'(v)(b)-d, A\Phi'(v)(b)- d +t^{-1}A\hat l(t,b,b+t^{-1}h(t),v), A\Phi(v)-f)
\end{align*}
which is what we needed to show. It is also clear that $\alpha:= \Phi'(v)(b)+\partial S(f,v)(d-A\Phi'(v)(b))$ is positively homogeneous. From \eqref{eq:VIForS}, the function $\delta :=\partial S(f,v)(d-A\Phi'(v)(b))$ satisfies 
\begin{align*}
\delta \in \mathcal{K}^z &: \quad \langle A\delta - d + A\Phi'(v)(b), \delta - \psi \rangle \leq 0 \quad \forall \psi \in \mathcal{K}^z\\
\mathcal{K}^z &:= \{ w \in V : w \leq 0 \text{ q.e. on } \mathcal{Z}(z-\Phi(v)) \text{ and } \langle Az-f, w \rangle = 0\},\\
\quad z &:= S(f,v).
\end{align*}
Recalling the definition of $\alpha$ and making the substitution $\varphi := \Phi'(v)(b)+ \psi$ in the above variational formulation for $\delta$ yields the formulation for $\alpha$ stated in the proposition.

\noindent (2) We estimate the remainder term by using Lemmas \ref{lem:uniformConvergenceOfhatO} and \ref{lem:uniformConvergenceOfL} as follows:
\begin{align*}
\norm{r(t,b,h(t),v)}{V}&\leq \norm{\hat l(t,b,b+t^{-1}h(t),v)}{V}\\
&\quad + \norm{\hat o(t,A\Phi'(v)(b)-d, A\Phi'(v)(b)- d + At^{-1}\hat l(t,b,b+t^{-1}h(t),v), A\Phi(v)-f )}{V}\\
&\leq \sup_{\lambda \in [0,1]}\norm{\Phi'(v + tb + \lambda h(t))h(t)}{V} + \norm{l(t,b,v)}{V} + \frac{C_b}{C_a}\norm{\hat l(t,b,b+t^{-1}h(t),v)}{H^{1}}\\
&\quad + \norm{o(t,A\Phi'(v)(b)-d, A\Phi(v)-f)}{V}\\
&\leq \sup_{\lambda \in [0,1]}\norm{\Phi'(v + tb + \lambda h(t))h(t)}{V}+ \norm{l(t,b,v)}{V}\\
&\quad + \frac{C_b}{C_a}\left(\sup_{\lambda \in [0,1]}\norm{\Phi'(v + tb + \lambda h(t))h(t)}{op}+ \norm{l(t,b,v)}{V}\right)\\
&\quad + \norm{o(t,A\Phi'(v)(b)-d, A\Phi(v)-f)}{V}\\
&\leq \left(1+\frac{C_b}{C_a}\right)\sup_{\lambda \in [0,1]}\norm{\Phi'(v + tb + \lambda h(t))h(t)}{V}+ \left(1+\frac{C_b}{C_a}\right)\norm{l(t,b,v)}{V} \\
&\quad + \norm{o(t,A\Phi'(v)(b)-d, A\Phi(v)-f)}{V}.
\end{align*}
Dividing by $t$ and sending $t \to 0^+$, we see that the remainder term vanishes in the limit thanks to \ref{item:assPhiDerivativeBounded}. Furthermore the convergence to zero is uniform in $d$ on compact subsets since $d$ appears only in the final term above. Since by \ref{itm:PhiDiff} $\Phi$ is compactly differentiable, the convergence to zero is also uniform in $b$ on compact subsets if also first term on the right hand side converges uniformly in $b$.

\noindent (3) If $S(f,v)$ satisfies strict complementarity (see \eqref{eq:blah11} and the surrounding discussion), 
that is, if
\[\mathcal K^z = \mathcal S^z := \{ w \in V : w = 0 \text{ q.e. on } \mathcal{Z}(\Phi(v)-z)\},\]
then $\delta$ satisfies
\begin{align*}
\delta \in \mathcal S^z &: \quad \langle A\delta - d + A\Phi'(v)(b), \delta - \psi \rangle = 0 \quad \forall \psi \in \mathcal S^z.
\end{align*}
As before, recalling the definition of $\alpha$ and making the substitution $\varphi := \Phi'(v)(b)+ \psi$ in the above equality yields the equality for for $\alpha$.
It follows under \ref{itm:PhiDiffLinearInDirection} that the mapping $\delta\colon V^* \times V \to V$ given by $(d,b) \mapsto \delta(d,b)$ is linear:
\[\delta(c_1 d_1 + c_2d_2, c_1b_1 + c_2b_2) = c_1\delta(d_1, b_1) + c_2\delta(d_2, b_2)\]
where $c_1, c_2 \in \mathbb{R}$, and $\alpha$ also inherits this property.
\end{proof}
\begin{remark}In the formulation of the previous proposition, we introduced the notion of a function satisfying strict complementarity with respect to a base point. This is compatible with how strict complementarity was defined in the paragraphs preceding Theorem \ref{thm:regularity}; indeed \eqref{eq:strictCompBeforeMainTheorem} can be rephrased in the language of the above proposition as $u=S(f,u)$ satisfies strict complementarity with respect to $w$ once we recall the identity \eqref{eq:AandZ}. 
\end{remark}

%
%

\section{Existence for the QVI and iteration scheme for sensitivity analysis}
In this section, we detail and set up the existence results that we need and also define the VI iteration scheme. We also specify the precise selection mechanism that will arise in the main directional differentiability result.
\subsection{Existence for QVIs in ordered intervals}
In this subsection we will justify the non-emptiness of the set $\mathbf{Q}(f)$. More precisely, recall that $u \in \mathbf{Q}(f)$ was chosen arbitrarily in the interval $[0,\bar u]$ with $\bar u \in V$ the weak solution of the unconstrained problem \eqref{eq:baru}. 
Let us see why such a $u$ actually exists.

We recall the notion of subsolution and supersolution for the mapping $S$ with right hand side $f$. A function $w$ is a \emph{subsolution} if $w \leq S(f,w)$, and a \emph{supersolution} is defined in the same way with the opposite inequality. If $f \geq 0$ in $V^*$ and since we assumed $\Phi(0) \geq 0$, it is not hard to check that $0$ is a subsolution for \eqref{eq:QVI}, 
and we also see that $\bar u$ is a supersolution by an argument like eg. \cite[Lemma 1.1, Chapter 4.1]{LionsBensoussan}. Standard results on elliptic PDEs guarantee that $\bar u \in V_+$ whenever $f \in V^*_+(\Omega)$.
\begin{theorem}\label{thm:existenceForQVI}
If $f \geq 0$ in $V^*$, there exist solutions $u \in V$ to \eqref{eq:QVI} in the interval $[0,\bar u].$ 
\end{theorem}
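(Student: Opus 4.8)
The plan is to apply a monotone iteration (super/subsolution) argument combined with the order-preserving structure of the VI solution map $S$. First I would record the two key monotonicity facts: (i) if $f \geq 0$ and $\psi_1 \leq \psi_2$, then $\mathbf{K}(\psi_1) \subseteq \mathbf{K}(\psi_2)$ is not quite what we want, but rather $S(f,\cdot)$ is increasing in the obstacle argument — i.e. $\psi_1 \leq \psi_2$ implies $S(f,\psi_1) \leq S(f,\psi_2)$; this follows from $\Phi$ being increasing together with the comparison principle for VIs (test the two VIs against suitable truncations, using T-monotonicity of $A$), cf. the standard argument in \cite{Rodrigues,LionsBensoussan}. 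And (ii) $0$ is a subsolution (since $\Phi(0) \geq 0$ means $0 \in \mathbf{K}(0)$ and then $S(f,0) \geq 0$ because $f \geq 0$, using coercivity/T-monotonicity), while $\bar u$ is a supersolution (as $\bar u = A^{-1}f \geq \Phi(\bar u)$ would be too strong; instead one checks $S(f,\bar u) \leq \bar u$ directly because the solution of a VI lies below the solution of the unconstrained equation with the same right-hand side).

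Next I would define the iteration $u_0 := 0$ and $u_{n+1} := S(f, u_n)$. Using the subsolution property $u_0 = 0 \leq S(f,0) = u_1$ and the monotonicity (i), induction gives that $\{u_n\}$ is non-decreasing; similarly, starting the comparison from $\bar u$ and using that $\bar u$ is a supersolution together with (i), induction gives $u_n \leq \bar u$ for all $n$. Hence $0 = u_0 \leq u_1 \leq \cdots \leq u_n \leq \cdots \leq \bar u$, so the sequence is bounded in $L^2(X;\xi)$ and pointwise $\xi$-a.e. convergent to some limit $u$ with $0 \leq u \leq \bar u$. To upgrade this to convergence in $V$ I would test the VI defining $u_{n+1}$ with $v = u_n \in \mathbf{K}(u_{n-1}) \subseteq \mathbf{K}(u_n)$ (valid since $u_{n-1} \leq u_n$ implies $\Phi(u_{n-1}) \leq \Phi(u_n)$) and also with $v = u_{n+2}$ (which lies in $\mathbf{K}(u_n)$ if we already know $u_{n+1} \leq u_n$... actually here using $u_{n+1} \leq u_{n+2}$ is the wrong direction, so instead) — more cleanly, use coercivity: $C_a \|u_{n+1}\|_V^2 \leq \langle A u_{n+1}, u_{n+1}\rangle$ and the VI gives $\langle Au_{n+1}, u_{n+1} - v\rangle \leq \langle f, u_{n+1}-v\rangle$ for any fixed admissible $v$ (e.g. $v \equiv \min(\Phi(0), u_{n+1})$ or simply $v$ the projection of $0$), yielding a uniform bound $\|u_{n+1}\|_V \leq C$. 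Then a subsequence converges weakly in $V$ and strongly in $H$ to $u$; passing to the limit in the VI $u_{n+1} \in \mathbf{K}(u_n)$, $\langle Au_{n+1} - f, u_{n+1} - v\rangle \leq 0$ — using strong $H$-convergence, the continuity of $\Phi$ (which holds under the standing assumptions, or at least Mosco-convergence of $\mathbf{K}(u_n)$ to $\mathbf{K}(u)$) and weak lower semicontinuity of $u \mapsto \langle Au, u\rangle$ — shows $u \in \mathbf{K}(u)$ and $u$ solves \eqref{eq:QVI}. Thus $u \in \mathbf{Q}(f) \cap [0,\bar u]$.

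I expect the main obstacle to be the passage to the limit in the constraint: one must show that if $u_n \to u$ (strongly in $H$, weakly in $V$) then the moving sets $\mathbf{K}(u_n)$ converge in the sense of Mosco to $\mathbf{K}(u)$, so that the limiting inequality is tested against the correct set $\mathbf{K}(u)$ rather than $\mathbf{K}(u_n)$. This requires controlling $\Phi(u_n) \to \Phi(u)$ in $V$ (or at least in a sense strong enough to transfer the a.e.-constraint), which is where the continuity/complete-continuity properties of $\Phi$ enter; in the monotone setting one can alternatively exploit that $\Phi(u_n) \uparrow \Phi(u)$ monotonically and invoke monotone convergence together with the fact that $\Phi(u_n) \leq \Phi(\bar u) \in V$ gives a uniform $V$-bound on $\Phi(u_n)$, hence a weakly convergent subsequence whose limit must be $\Phi(u)$. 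The rest — the monotone iteration, the uniform $V$-bounds via coercivity, and weak lower semicontinuity — is routine. I would also note that this construction gives the maximal solution in $[0,\bar u]$ if started from $\bar u$ and the minimal one if started from $0$, but for the statement as given only existence of some $u \in [0,\bar u]$ is needed.
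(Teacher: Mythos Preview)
Your approach via monotone iteration is \emph{different} from the paper's, and in the generality of the theorem as stated it has a genuine gap.

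The paper does not iterate at all here: it simply invokes the Birkhoff--Tartar fixed point theorem for increasing maps on order intervals in vector lattices. Once one checks that $0$ is a subsolution, $\bar u$ is a supersolution, and $\psi \mapsto S(f,\psi)$ is increasing (all of which you correctly identify), Tartar's theorem gives a fixed point $u = S(f,u) \in [0,\bar u]$ by a purely \emph{order-theoretic} argument. No continuity of $\Phi$, no Mosco convergence, no limit passage in the VI is required --- only that $\Phi$ is increasing, which is exactly the standing assumption.

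Your argument, by contrast, needs to pass to the limit in the VI for the iterates $u_{n+1} = S(f,u_n)$, and this is precisely where it breaks. You write that ``the continuity of $\Phi$ \ldots\ holds under the standing assumptions'', but it does not: the only standing hypotheses on $\Phi$ are that $\Phi\colon V\to V$ is increasing and $\Phi(0)\geq 0$. Nothing prevents $\Phi$ from being discontinuous, and without some form of continuity you cannot conclude $\Phi(u_n)\to\Phi(u)$ in any useful topology, hence you cannot establish Mosco convergence of $\mathbf{K}(u_n)$ to $\mathbf{K}(u)$. Your fallback attempt --- arguing that $\Phi(u_n)\leq \Phi(\bar u)\in V$ gives a uniform $V$-bound on $\Phi(u_n)$ --- is also incorrect: a pointwise a.e.\ upper bound by an element of $V$ does not bound the $V$-norm (think of oscillatory sequences in $H^1$ bounded pointwise by a constant). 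And even if $\Phi(u_n)\rightharpoonup \psi$ weakly in $V$, identifying $\psi$ with $\Phi(u)$ again requires continuity of $\Phi$ that you do not have.

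It is worth noting that the paper \emph{does} carry out exactly your iteration argument later (Theorem~\ref{thm:convergenceOfIterations}), but only under the additional hypothesis \ref{itm:compContOfPhi} (complete continuity of $\Phi$) or \ref{itm:PhiConcaveEtc}; these are precisely what make the limit passage work. So your strategy is correct \emph{under those extra assumptions}, but for the bare theorem the order-theoretic Birkhoff--Tartar route is what is needed.
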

\begin{proof}
Tartar in \cite{Tartar74} proved, using the theory of fixed points in vector lattices in the work of Birkhoff \cite{Birkhoff}, that the subset of solutions $u$ of \eqref{eq:QVI} lying between the subsolution $0$ and supersolution $\bar u$ is non-empty (and in fact there exist smallest and largest solutions). See also Aubin \cite[Chapter 15.2.2]{Aubin} and Mosco \cite[Chapter 2.5]{Mosco1976}.
\end{proof}
This argument of course also applies to the QVI with non-negative right hand side $f+td$, resulting in existence of solutions on the interval $[0,\bar q(t)]$. But we want to localise  to a smaller subinterval $[u, \bar q(t)]$ and prove that solutions exist there. For this purpose, we have the following lemmas.
\begin{lemma}\label{lem:uIsSubSoln}If $f, d \geq 0$ in $V^*$, the function $u$ is a subsolution for the QVI with right hand side $f+td$.
\end{lemma}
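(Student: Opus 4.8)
The plan is to show that $u$ satisfies $u \leq S(f+td, u)$, i.e. that $u$ is a subsolution for the VI with obstacle generated by $u$ itself and right-hand side $f+td$. Since $u \in \mathbf{Q}(f)$, we have $u = S(f,u)$, so $u$ solves the VI
\[
u \in \mathbf{K}(u): \quad \langle Au - f, u - v\rangle \leq 0 \quad \forall v \in \mathbf{K}(u).
\]
Now let $w := S(f+td, u)$ be the solution of the same VI but with the larger source term $f+td$:
\[
w \in \mathbf{K}(u): \quad \langle Aw - f - td, w - v\rangle \leq 0 \quad \forall v \in \mathbf{K}(u).
\]
The key point is that both VIs are posed over the \emph{same} convex set $\mathbf{K}(u)$, and the second has a source term that dominates the first since $td \geq 0$ in $V^*$. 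The comparison principle for variational inequalities (monotonicity of the solution map $g \mapsto S(g,\psi)$ with respect to the source, a standard consequence of T-monotonicity of $A$ — see e.g. \cite{Rodrigues,Kinderlehrer}) then yields $u = S(f,u) \leq S(f+td,u) = w$, which is exactly the statement that $u$ is a subsolution for the QVI with right-hand side $f+td$.

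First I would recall precisely the definition of subsolution given just above the statement: $u$ is a subsolution for the QVI with source $f+td$ iff $u \leq S(f+td, u)$. Then I would carry out the comparison argument in detail: test the VI for $u$ with $v = u - (u-w)^+ = \min(u,w)$ (which lies in $\mathbf{K}(u)$ since it is below both $u$ and $\Phi(u)$) and test the VI for $w$ with $v = w + (u-w)^+ = \max(u,w)$ (which also lies in $\mathbf{K}(u)$, again because $u, w \leq \Phi(u)$ and the max of two functions bounded above by $\Phi(u)$ is still bounded above by $\Phi(u)$). Adding the two resulting inequalities gives $\langle A(u-w), (u-w)^+\rangle \leq \langle -td, (u-w)^+\rangle \leq 0$, using $d \geq 0$ and $(u-w)^+ \geq 0$. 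By T-monotonicity and coercivity, writing $u - w = (u-w)^+ - (u-w)^-$, we get $\langle A(u-w)^+, (u-w)^+\rangle \leq \langle A(u-w)^+, (u-w)^-\rangle \leq 0$, forcing $(u-w)^+ = 0$, i.e. $u \leq w$.

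The only mild subtlety — and the step I would be most careful about — is the membership claims $\min(u,w) \in \mathbf{K}(u)$ and $\max(u,w) \in \mathbf{K}(u)$. These require that $V$ is closed under the lattice operations $\min$ and $\max$, which is guaranteed by the standing assumption that $|u| \in V$ whenever $u \in V$ (so that $u^+, u^-$, hence $\min$ and $\max$ of any two elements, remain in $V$), together with the observation that $\mathbf{K}(u) = \{v \in V : v \leq \Phi(u)\}$ is a lattice ideal from below in the sense relevant here: if $v_1, v_2 \leq \Phi(u)$ then $\min(v_1,v_2) \leq \Phi(u)$ trivially, and $\max(v_1, v_2) \leq \Phi(u)$ as well. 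Since $u, w \in \mathbf{K}(u)$ by construction, both $\min(u,w)$ and $\max(u,w)$ lie in $\mathbf{K}(u)$, so the test functions above are admissible. With this in place the comparison argument goes through and the lemma follows; this is essentially the same mechanism used implicitly in the discussion preceding Theorem \ref{thm:existenceForQVI} to verify that $0$ is a subsolution, now applied with a shifted source term.
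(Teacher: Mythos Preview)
Your proof is correct and follows essentially the same route as the paper: both arguments test the VI for $u$ with $u-(u-w)^+$ and the VI for $w=S(f+td,u)$ with $w+(u-w)^+$, combine to obtain $\langle A(u-w),(u-w)^+\rangle\le -\langle td,(u-w)^+\rangle\le 0$, and conclude $(u-w)^+=0$ via T-monotonicity. One small algebraic slip: from the decomposition $u-w=(u-w)^+-(u-w)^-$ the inequality you actually get is $\langle A(u-w)^+,(u-w)^+\rangle\le\langle A(u-w)^-,(u-w)^+\rangle$, not $\langle A(u-w)^+,(u-w)^-\rangle$; the right-hand side is still $\le 0$ by T-monotonicity applied to $w-u$, so the conclusion is unaffected.
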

\begin{proof}
The function $s=S(f+td,u)$ solves
\begin{align*}
s \in \mathbf{K}(u) : \quad \langle As - (f+td), s-v\rangle \leq 0\quad\forall v \in \mathbf{K}(u),
\end{align*}
wherein picking $v = s + (u-s)^+$ and also testing \eqref{eq:QVI}
 with $u - (u-s)^+$ and combining, we find
 \[\langle A(u-s), (u-s)^+\rangle \leq -\langle td, (u-s)^+ \rangle \leq 0.\]
Thanks to the T-monotonicity of $A$, this implies that $u \leq s$, i.e., $u \leq S(f+td,u)$. 
\end{proof} 
\begin{lemma}\label{lem:uIsBounded}If $f, d \geq 0$ in $V^*$, we have $u \leq \bar q(t)$ (where $\bar q(t)$ is defined in \eqref{eq:barq}).
\end{lemma}
\begin{proof}
Consider the difference of $A\bar u = f$ and $A\bar q(t) = f+td$:
\[A(\bar u - \bar q(t)) = -td.\]
Testing with $(\bar u-\bar q(t))^+$ and using $d \in V_+^*$ 
we find that $\bar u \leq \bar q$ a.e. Since by definition $u \leq \bar u$, we obtain the desired result.
\end{proof}
\begin{theorem}If $f, d \geq 0$ in $V^*$, there exist solutions $q_t \in V$  to 
\[q_t \in \mathbf{K}(q_t) : \quad \langle Aq_t-(f+td), q_t-v \rangle \leq 0 \quad \forall v \in \mathbf{K}(q_t)\]
in the interval $[u, \bar q(t)].$
\end{theorem}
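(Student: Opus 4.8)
The plan is to mimic the proof of Theorem \ref{thm:existenceForQVI} but now working on the interval $[u,\bar q(t)]$ instead of $[0,\bar u]$, using the Tartar--Birkhoff fixed point machinery for increasing maps in vector lattices. The two ingredients needed are a subsolution and a supersolution for the QVI with right-hand side $f+td$ that sandwich the region of interest. First I would invoke Lemma \ref{lem:uIsSubSoln} to conclude that $u$ is a subsolution, i.e. $u \leq S(f+td,u)$. Then I would show that $\bar q(t)$ is a supersolution, i.e. $\bar q(t) \geq S(f+td,\bar q(t))$: this follows from the fact that $S(f+td,\bar q(t))$ solves a VI constrained by $\mathbf{K}(\bar q(t))$ whose elements are bounded above by $\Phi(\bar q(t)) \geq \Phi(0) \geq 0$, so that $S(f+td,\bar q(t)) \in \mathbf{K}(\bar q(t))$ and hence $S(f+td,\bar q(t)) \leq \Phi(\bar q(t))$; combined with the comparison between $S(f+td,\bar q(t))$ and the unconstrained solution $\bar q(t)$ of $A\bar q(t) = f+td$ (test the VI with $v = S(f+td,\bar q(t)) + (S(f+td,\bar q(t))-\bar q(t))^+$ and use T-monotonicity, exactly as in the cited argument of \cite[Lemma 1.1, Chapter 4.1]{LionsBensoussan}), one gets $S(f+td,\bar q(t)) \leq \bar q(t)$.

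Having established that $u$ is a subsolution and $\bar q(t)$ is a supersolution with $u \leq \bar q(t)$ (the latter is precisely Lemma \ref{lem:uIsBounded}), I would then apply Tartar's theorem \cite{Tartar74} (see also \cite[Chapter 15.2.2]{Aubin} and \cite[Chapter 2.5]{Mosco1976}) verbatim as in the proof of Theorem \ref{thm:existenceForQVI}: since $S(f+td,\cdot)$ is an increasing map (a consequence of the T-monotonicity of $A$ and the monotonicity of $\Phi$, which together force the solution map of the VI to be order-preserving in the obstacle), the set of fixed points of $S(f+td,\cdot)$ lying in the order interval $[u,\bar q(t)]$ is non-empty, and in fact contains a smallest and a largest element. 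Any such fixed point $q_t$ satisfies $q_t = S(f+td,q_t)$, which is exactly the assertion that $q_t \in \mathbf{K}(q_t)$ solves the stated VI; this is the content of the theorem.

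The main obstacle, such as it is, lies in cleanly verifying that $\bar q(t)$ is a supersolution — one must be careful that $\Phi(\bar q(t)) \geq 0$ (which uses $\Phi(0) \geq 0$, the monotonicity of $\Phi$, and $\bar q(t) \geq 0$, the last of which follows from $f+td \in V^*_+$ and standard elliptic comparison), so that the constraint set $\mathbf{K}(\bar q(t))$ contains $\bar q(t) \wedge \Phi(\bar q(t))$ and the truncation argument goes through. Everything else is a direct transcription of the reasoning already carried out for the subinterval $[0,\bar u]$ in Theorem \ref{thm:existenceForQVI} and Lemmas \ref{lem:uIsSubSoln}--\ref{lem:uIsBounded}, so the proof is short; indeed it essentially amounts to citing Tartar once the sub/supersolution pair has been identified.
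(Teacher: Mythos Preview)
Your proposal is correct and follows essentially the same route as the paper: cite Lemma~\ref{lem:uIsSubSoln} for the subsolution, Lemma~\ref{lem:uIsBounded} for the ordering $u \leq \bar q(t)$, observe that $\bar q(t)$ is a supersolution via the standard Lions--Bensoussan truncation argument, and invoke Tartar~\cite{Tartar74}. One small slip: the test function in your supersolution check should be $v = S(f+td,\bar q(t)) - (S(f+td,\bar q(t))-\bar q(t))^+$ (i.e.\ $\min(s,\bar q(t))$), not with a plus sign; with that correction feasibility $v \leq \Phi(\bar q(t))$ is automatic since $v \leq s \leq \Phi(\bar q(t))$, and your extra concern about $\Phi(\bar q(t)) \geq 0$ is not actually needed.
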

\begin{proof}
Since $u$ is a subsolution (by Lemma \ref{lem:uIsSubSoln}) and $\bar q$ is a supersolution with $u \leq \bar q$ (by Lemma \ref{lem:uIsBounded}), we know again by \cite{Tartar74} that there exist solutions $\mathbf{Q}(f+td)$ now in the ordered interval $[u, \bar q(t)]$. 
\end{proof}
\subsection{Approximation of the QVI by VI iterates}
We now clarify the procedure laid out in the introduction as how to we tackle the problem. Define the sequence $q_n(t)$ by 
\begin{equation}\label{eq:sequenceQn}
q_n(t) \in \mathbf{K}(q_{n-1}(t)) : \quad \langle Aq_n(t) - (f+td), q_n(t) - v\rangle \leq 0 \quad \forall v \in \mathbf{K}(q_{n-1}(t))
\end{equation}
and $q_0$ is chosen so that $0 \leq q_0 \leq \bar q(t)$. We shall choose $q_0$ so that $q_n(t) \to \mathbf{Q}(f+td)$ in various senses (see later). Our idea is to obtain by Proposition \ref{prop:formulaForSDiff} an expression for $q_n(t)$ in terms of $u$, a derivative term $\alpha_n$ and a higher order term $o_n$, and then pass to the limit in this expression to hopefully obtain an expansion formula that will tell us that the QVI solution map is differentiable.


From now on we assume that the source term $f$ and the direction $d$ are non-negative in $V^*$ so that the results of the previous subsection are valid.  
\begin{remark}
The requirement for $f \in V^*$  to be non-negative was used to show that $0$ is a subsolution to \eqref{eq:QVI}. We could instead have assumed the existence of a subsolution and kept $f \in V^*$ more general, and similarly, we could also have chosen a different upper bound instead of $\bar u$ and $\bar q(t)$. However, for simplicity we will not proceed with this generalisation.
\end{remark}
\begin{theorem}\label{thm:convergenceOfIterations}Let $f, d \geq 0$ in $V^*$ and let either \ref{itm:compContOfPhi} or  \ref{itm:PhiConcaveEtc} and $f, d \in L^\infty_+(\Omega)$ hold. With $q_0 := u$, the sequence $q_n(t)=S(f+td,q_{n-1}(t))$ defined above is monotonically increasing in $n$ and converges to a function $q(t) \in \mathbf{Q}(f+td)$. The convergence is strong in $V$ if \ref{itm:compContOfPhi} holds, otherwise if \ref{itm:PhiConcaveEtc} and $f, d \in L^\infty_+(\Omega)$ hold, it is weak in $V$ and strong in $L^\infty(\Omega)$.

\end{theorem}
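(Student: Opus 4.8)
The plan is to set $q_0 := u$ and exploit that $u$ is a subsolution for the QVI with right-hand side $f+td$ (Lemma \ref{lem:uIsSubSoln}) together with monotonicity of the VI solution map $S$ in the obstacle argument, which in turn follows from monotonicity of $\Phi$ via the identity \eqref{eq:identitfySandS0}. First I would show by induction that $q_{n-1} \leq q_n \leq \bar q(t)$ for all $n$. The base case $q_0 = u \leq S(f+td,u) = q_1$ is exactly Lemma \ref{lem:uIsSubSoln}, and $q_1 \leq \bar q(t)$ because $\bar q(t)$ is a supersolution and $u \leq \bar q(t)$ (Lemma \ref{lem:uIsBounded}), using a comparison argument for $S$ of the type in the proof of Lemma \ref{lem:uIsSubSoln}. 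For the inductive step, if $q_{n-1} \leq q_n$ then $\mathbf{K}(q_{n-1}) \subseteq \mathbf{K}(q_n)$ since $\Phi$ is increasing, and a standard test-function comparison (test the VI for $q_n$ with $q_n + (q_n - q_{n+1})^+$ and the VI for $q_{n+1}$ with $q_{n+1} - (q_n - q_{n+1})^+$, subtract, and use T-monotonicity of $A$) yields $q_n \leq q_{n+1}$; the bound $q_{n+1} \leq \bar q(t)$ follows similarly from the supersolution property. Hence $\{q_n(t)\}$ is monotonically increasing and order-bounded by $\bar q(t)$.

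Next I would extract uniform bounds and pass to the limit. Testing \eqref{eq:sequenceQn} with $v = \Phi(q_{n-1})$ (which lies in $\mathbf{K}(q_{n-1})$), using coercivity and boundedness of $A$ and the a priori bound $0 \leq q_n \leq \bar q(t)$, gives $\norm{q_n}{V} \leq C$ uniformly in $n$, where $C$ depends on $t$, $\norm{\bar q(t)}{V}$ and $\norm{f+td}{V^*}$; here one also needs $\norm{\Phi(q_{n-1})}{V}$ bounded, which under \ref{itm:compContOfPhi} follows from continuity of $\Phi$ on the bounded, convergent-in-$H$ sequence, and under \ref{itm:PhiConcaveEtc} from the $L^\infty$ bounds and concavity (a concave increasing map on $L^\infty_+$ is locally Lipschitz on order intervals, hence maps the order interval $[0,\bar q(t)]$ into a bounded set in $L^\infty$). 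Since $\{q_n\}$ is increasing and bounded in $V$, it converges weakly in $V$ to some $q(t)$ and, by monotone convergence together with the continuous embedding $V \subset H = L^2(X;\xi)$, also strongly in $H$ and pointwise $\xi$-a.e.; in particular $q(t) \in [u, \bar q(t)]$.

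The main obstacle is passing to the limit \emph{in the obstacle}, i.e., showing $q(t) \in \mathbf{Q}(f+td)$, which requires $\Phi(q_{n-1}) \to \Phi(q(t))$ in a strong enough sense to transfer the constraint and the variational inequality. This is exactly where the two cases \ref{itm:compContOfPhi} and \ref{itm:PhiConcaveEtc} diverge. Under \ref{itm:compContOfPhi}, complete continuity of $\Phi$ converts the weak convergence $q_{n-1} \weaklyto q(t)$ in $V$ into strong convergence $\Phi(q_{n-1}) \to \Phi(q(t))$ in $V$; one then upgrades $q_n \weaklyto q(t)$ to strong convergence in $V$ by the standard Mosco-type / Minty argument (test \eqref{eq:sequenceQn} with $v = S(f+td, q(t)) \wedge$ something in $\mathbf{K}(q_{n-1})$, or more directly estimate $C_a\norm{q_n - q(t)}{V}^2 \leq \langle A(q_n - q(t)), q_n - q(t)\rangle$ and use the VI together with $\Phi(q_{n-1}) \to \Phi(q(t))$ strongly to push the right-hand side to zero), and finally pass to the limit in the inequality: for any $v \in \mathbf{K}(q(t))$, approximate $v$ by $v_n := v - (v - \Phi(q_{n-1}))^+ \in \mathbf{K}(q_{n-1})$ with $v_n \to v$ in $V$, insert into \eqref{eq:sequenceQn} and let $n \to \infty$ to obtain $\langle Aq(t) - (f+td), q(t) - v\rangle \leq 0$, while $q(t) \leq \Phi(q(t))$ a.e. follows from $q_n \leq \Phi(q_{n-1})$ by taking the a.e. limit. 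Under \ref{itm:PhiConcaveEtc} with $f, d \in L^\infty_+(\Omega)$, complete continuity is unavailable, so instead one works with the $L^\infty$ monotone limit: the increasing sequence $q_n$ is bounded in $L^\infty$, hence $q_n \uparrow q(t)$ in $L^\infty$ (monotone + uniformly bounded, using that the limit is the pointwise sup and Dini-type monotonicity; more carefully, boundedness in $L^\infty$ plus a.e. convergence plus monotonicity give $L^\infty$ convergence on order intervals because $\Phi$ is concave and the sequence is sandwiched), and continuity of $\Phi$ on $L^\infty_+(\Omega)$ then yields $\Phi(q_{n-1}) \to \Phi(q(t))$ in $L^\infty(\Omega)$; this is enough to pass to the limit in the constraint ($q(t) \leq \Phi(q(t))$ a.e.) and, combined with $q_n \weaklyto q(t)$ in $V$ and the approximation $v_n := v - (v - \Phi(q_{n-1}))^+$ (now $v_n \to v$ at least in $H$, and the VI is stable under this since the leading term only needs weak lower semicontinuity of $\langle A\cdot,\cdot\rangle$), gives $q(t) \in \mathbf{Q}(f+td)$. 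The convergence is strong in $V$ in the first case by the Minty argument above, and only weak in $V$ (with strong $L^\infty$) in the second, which is precisely the dichotomy asserted in the statement.
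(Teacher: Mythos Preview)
Your treatment of the monotonicity and of case \ref{itm:compContOfPhi} is essentially in line with the paper's proof (the paper cites the comparison principle directly rather than writing out the test-function argument, and for strong convergence in $V$ it invokes the Lipschitz-type estimate preceding \eqref{eq:ctsDependence} applied to $q_n=S(f+td,q_{n-1})$ and $q=S(f+td,q)$, which gives $\norm{q_n-q}{V}\leq(1+C_bC_a^{-1})\norm{\Phi(q_{n-1})-\Phi(q)}{V}\to 0$ immediately; your Minty argument also works). Two small points: your test functions for the inductive step have the wrong sign (you want $q_n-(q_n-q_{n+1})^+\in\mathbf{K}(q_{n-1})$ and $q_{n+1}+(q_n-q_{n+1})^+\in\mathbf{K}(q_n)$, not the other way round), and your route to the uniform $V$-bound via $v=\Phi(q_{n-1})$ forces you to control $\norm{\Phi(q_{n-1})}{V}$, which is circular as stated. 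The paper simply tests \eqref{eq:sequenceQn} with $v=q_n/2$ (admissible since $0\leq q_n/2\leq q_n\leq\Phi(q_{n-1})$), yielding $\norm{q_n}{V}\leq C_a^{-1}\norm{f+td}{V^*}$ with no reference to $\Phi$ at all.

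The genuine gap is in case \ref{itm:PhiConcaveEtc}. Your claim that ``monotone $+$ uniformly bounded in $L^\infty$ $+$ a.e.\ convergence'' yields $L^\infty$ convergence is false in general (take $q_n=\chi_{[1/n,1]}$ on $[0,1]$), and appealing vaguely to ``Dini-type monotonicity'' or ``because $\Phi$ is concave'' does not repair it: concavity of $\Phi$ says nothing a priori about the convergence mode of the $q_n$ themselves. Moreover, even granting $\Phi(q_{n-1})\to\Phi(q)$ in $L^\infty$, your recovery sequence $v_n$ then converges to $v$ only in $H$, and with merely $q_n\rightharpoonup q$ in $V$ you cannot pass to the limit in $\langle Aq_n,v_n\rangle$. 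The paper does not attempt a direct argument here; it invokes the result of Hanouzet and Joly (see also \cite[Appendix 1, \S 7]{Glowinski}), which uses concavity of $\Phi$ together with $\Phi(0)\geq c>0$ in an essential way to produce a geometric $L^\infty$ rate $\norm{q_n-q}{L^\infty}\leq C\theta^n$ and to identify $q$ as the (unique, by Laetsch) solution of the QVI directly, bypassing the Mosco-type limit passage altogether. This is the missing ingredient in your argument.
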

\begin{proof}
Since $q_1(t) = S(f+td, u)$ and $q_0=u=S(f,u)$ with $d \geq 0$ in $V^*$, it follows by the comparison principle \cite[\S 4.5, Corollary 5.2]{Rodrigues} that $q_1(t) \geq q_0.$ Similarly, since $q_2(t) = S(f+td, q_1(t))$ and $q_1(t) = S(f+td, q_0)$ and we have shown that $q_1(t) \geq q_0$, the comparison principle again (this time the comparison is in the obstacles, using the increasing property of $\Phi$) gives $q_2(t) \geq q_1(t)$. Repeating this argument, we find that $q_n(t)$ is an increasing sequence.

It is not hard to see that
\[q_n(t) \leq \bar q(t)\]
almost everywhere, by testing \eqref{eq:sequenceQn} with $q_n(t) - (q_n(t)-\bar q(t))^+$ and the $\bar q(t)$ PDE \eqref{eq:barq} with $(q_n(t)-\bar q(t))^+$ and combining in the usual way. Testing \eqref{eq:sequenceQn} with $q_n(t)\slash 2$ we obtain 
\[\norm{q_n(t)}{V} \leq C\]
where $C$ is independent of $n$ (but dependent on $f$ and $d$). Thus $q_{n_j}(t) \weaklyto q(t)$ in $V$ for a subsequence. The bounded increasing sequence $q_n(t)$ converges pointwise a.e. to a function $q(t)$ so by the subsequence principle we in fact have the following convergence for the whole sequence:
\begin{equation}\label{eq:convergencesForQn}
\begin{aligned}
q_n(t) &\weaklyto q(t) &&\text{in $V$}\\
q_n(t) &\to q(t) &&\text{a.e}.
\end{aligned}
\end{equation}
It remains for us to show that $q(t)$ solves the QVI problem.
\paragraph{In the case \ref{itm:compContOfPhi}.}The complete continuity of $\Phi$ tells us that $\Phi(q_{n-1}(t)) \to \Phi(q(t))$ strongly in $V$ and by the continuous embedding into $L^2(X)$, we have for a subsequence $\Phi(q_{n_j}(t)) \to \Phi(q(t))$ almost everywhere. Passing to the limit pointwise a.e. in $q_{n_j}(t) \leq \Phi(q_{{n_j}-1}(t))$ shows that $q(t)$ is feasible. The strong convergence $q_n(t) \to q(t)$ in $V$ follows from the standard continuous dependence estimate for VIs.


Given a function $v \in \mathbf{K}(q(t))$, setting $v_n:= v + \Phi(q_{n-1}(t))-\Phi(q(t))$, we see that  $v_n \leq \Phi(q_{n-1}(t))$ (so it is an admissible test function for the VI for $q_{n}(t)$) and 
$v_n \to v$ in $V$. This and the above convergence results plus the weak lower semicontinuity of norms is enough to the pass to the limit in \eqref{eq:sequenceQn} and we will obtain
\[\langle Aq(t)- (f+td), q(t)-v \rangle \leq 0\]
for all $v \in \mathbf{K}(q(t))$. This shows that $q(t) \in \mathbf{Q}(f+td)$. 

\paragraph{In the case \ref{itm:PhiConcaveEtc} and if $f, d \in L^\infty_+(\Omega)$.}As $\Phi(0) \geq c > 0$ the solution of \eqref{eq:QVI} is unique \cite{Laetsch}, and as $\Phi$ is concave and bounded away from zero at zero, it is known that $\mathbf{Q}(f+td)$ can be approximated by the iterations $q_n$ 
not only in the same sense as \eqref{eq:convergencesForQn} but also 
\begin{align*}
q_n(t) &\to q(t) \text{ in $L^\infty(\Omega)$}
\end{align*}
due to Hanouzet and Joly \cite{HanoJoly} (see also \cite[Appendix 1, \S 7]{Glowinski}), so long as 
$0 \leq q_0 \leq \bar q$ which is the case here by Lemma \ref{lem:uIsBounded}. Indeed, the concavity of $\Phi$ allows us to deduce that $q_n \to q$ directly, without going through the procedure described in the previous case where we had to approximate the test functions of the limiting QVI by test functions of the VI iterations, requiring compactness of $\Phi$.
\end{proof}

\begin{remark}
Lions and Bensoussan (see \cite[Chapter 4]{LionsBensoussan}) have shown that when $\Phi$ is of impulse control type in the $H^1_0(\Omega)$ or $H^1(\Omega)$ setting, if $u_n := S(f,u_{n-1})$, then
\begin{itemize}
\item setting $u_0 = \bar u$ leads to a decreasing sequence $u_n$ that converges to the maximal solution of \eqref{eq:QVI} in $[0,\bar u]$ \cite[Chapter 4, Lemma 1.2]{LionsBensoussan}
\item setting $u_0 = 0$ leads to an increasing sequence $u_n$ that may not converge to the minimal solution of the QVI; this is an open question.
\end{itemize}
\end{remark}
\subsubsection{Selection mechanism}To summarise the above, notice that the construction in the proof of Theorem \ref{thm:convergenceOfIterations} defines a selection mechanism for the multi-valued QVI solution mapping $\mathbf{Q}$ and this mechanism will appear in the expansion formula that characterises the directional derivative for the QVI. More precisely, we may choose any selection mapping $s_1\colon V^*_+ \to V$ that satisfies 
\[s_1(g) \in \mathbf{Q}(g)\cap [0,\bar u]. 
\]
Define the mapping $m_t \colon  \{h \in V^*_+ : h \leq f\text{ in $V^*$}\} \times \{w \in V : w \in [0,\bar q(t))\} \to V$ by
\begin{align*}
m_t(g,v) := \lim_{n \to \infty} v_n(t), \quad\text{where } 
\begin{cases}
v_0 &:= v,\\
v_n(t) &:= S(g+td, v_{n-1})
\end{cases}
\end{align*}
which satisfies $m_t(g,v) \in \mathbf{Q}(g+td)$. Then $q(t) = m_t(f,s_1(f))$ and $u=m_0(f,u)$.
\subsection{Expansion formula for the VI iterates}
Now that we know that the $q_n(t)$ converge to $q(t) \in \mathbf{Q}(f+td)$, we concentrate on deriving an expansion formula for $q_n(t)$ in terms of $u$.
Using Proposition \ref{prop:formulaForSDiff}, we can calculate the expansion formula for $q_1$ explicitly in terms of $u$:
\begin{equation}\label{eq:q1}
q_1 = S(f+td, q_0) = u + t\delta_1 + r(t,0,0,u),\qquad
\delta_1 
= \partial S(f,u)(d)
\end{equation}
(we could also have directly used \cite{Mignot} here, since there is no perturbation in the obstacle and Mignot's theory applies immediately).  Using this representation, we bootstrap and apply Proposition \ref{prop:formulaForSDiff} again to find $q_2$, and then $q_3$, explicitly in terms of $u$ and the directional derivatives of the previous step:
\begin{equation}\label{eq:systemforQn}
\begin{aligned}
q_2 &= S(f+td, u + t\delta_1 + r(t,0,0,u)) = u + t(\Phi'(u)\delta_1 + \delta_2) + r(t,\delta_1, r(t,0,0,u),u)\\
q_3 &= S(f+td, u + t(\Phi'(u)\delta_1+\delta_2) + r(t,\delta_1, r(t,0,0,u),u))\\
&= u + t(\Phi'(u)[\Phi'(u)(\delta_1) + \delta_2]+\delta_3) + r(t,\Phi'(u)(\delta_1)+\delta_2, r(t,\delta_1,r(t,0,0,u),u),u).
\end{aligned}
\end{equation}
where we have defined
\begin{alignat*}{3}
&\delta_2 
= \partial S(f,u)(d-A\Phi'(u)(\delta_1))\\
&\delta_3 
= \partial S(f,u)(d-A\Phi'(u)(\Phi'(u)(\delta_1)+\delta_2)).
\end{alignat*}
This inspires us to make the following definition for the general case:
\begin{align}\label{eq:defnDeltan}
\delta_n 
&:= \partial S(f,u)[d-A\Phi'(u)(\Phi'(u)[...\Phi'(u)[\Phi'(u)(\delta_0)+\delta_1] + \delta_2...] + \delta_{n-2}] + \delta_{n-1})]
\end{align}
To ease notation, define
\begin{equation}\label{eq:alphan}
\begin{aligned}
\alpha_n &:= 
\begin{cases}
\delta_1&:\text{if $n =1$}\\
\Phi'(u)[\Phi'(u)[...\Phi'(u)[\Phi'(u)(\delta_1)+\delta_2] + \delta_3...] + \delta_{n-1}] + \delta_{n} &:\text{if $n \geq 2$}
\end{cases}
\end{aligned}
\end{equation}
and observe the recursion formula
\begin{align}
\alpha_n =
\Phi'(u)[\alpha_{n-1}] + \delta_{n}\quad\text{for $n \geq 2$}\label{eq:idForAlphaN}
\end{align}
and the formula \eqref{eq:defnDeltan} defining $\delta_n$ can be written as
\begin{equation}\label{eq:deltaNandAlphaN}
\delta_n 
= \partial S(f,u)(d-A\Phi'(u)(\alpha_{n-1})).
\end{equation}
Then we can write \eqref{eq:q1},  \eqref{eq:systemforQn} as
\begin{equation}\label{eq:eqSystemforQn}
\begin{aligned}
q_1 
&= u + t\alpha_1 + r(t,0,0,u)\\
q_2 
&= u + t\alpha_2 + r(t, \alpha_1, r(t,0,0,u),u)\\
q_3 
&= u + t\alpha_3 + r(t,\alpha_2, r(t,\alpha_1, r(t,0,0,u),u),u)
\end{aligned}
\end{equation}
Now to ease the notation on the higher order terms, let us not write the $u$ base point in the form $r$ above and define
\begin{equation*}\label{eq:defnOfOn}
\begin{aligned}
o_n(t) &:=
\begin{cases}
r(t,0,0) &: \text{if $n=1$}\\
 r(t, \alpha_{n-1}, r(t, \alpha_{n-2}, r(t,  \alpha_{n-3}, ...., r(t,\alpha_1, r(t,0,0))...)&: \text{if $n\geq 2$}
 \end{cases}
\end{aligned}
\end{equation*}
and note the recursion
\begin{equation}\label{eq:identityforOnAndRn}
o_n(t) = r(t,\alpha_{n-1},o_{n-1}(t)).
\end{equation}
All of this suggests the following expression for $q_n,$ which is the main result in this subsection.
\begin{proposition}\label{prop:relationBetweenQnAndUn}
Let \ref{itm:PhiDiff} hold. For each $n$, the following equality holds:
\begin{equation}\label{eq:eqForqn}
q_n(t) = u + t\alpha_n + o_n(t)
\end{equation}
where $t^{-1}o_n(t) \to 0$ as $t \to 0^+$ if \ref{item:assPhiDerivativeBounded} holds and where $\alpha_n$, which is defined in \eqref{eq:alphan}, is positively homogeneous in the direction $d$ and satisfies the VI
\begin{align}
\nonumber &\alpha_n \in \mathcal{K}^u(\alpha_{n-1}) : \langle A\alpha_n - d, \alpha_n - \varphi \rangle \leq 0 \qquad \forall \varphi \in \mathcal{K}^u(\alpha_{n-1})\\
&\mathcal{K}^u(\alpha_{n-1}) := \{ \varphi \in V : \varphi \leq \Phi'(u)(\alpha_{n-1})\text{ q.e. on }\mathcal{A}(u) \text{ and } \langle Au-f, \varphi-\Phi'(u)(\alpha_{n-1}) \rangle = 0\}.\label{eq:alphanTestFunctionSpace}
\end{align}
Furthermore, if $u=S(f,u)$ satisfies strict complementarity with respect to $\alpha_{n-1}$ (see Proposition \ref{prop:formulaForSDiff}), i.e., if
\[\mathcal K^u(\alpha_{n-1}) = \mathcal S^u(\alpha_{n-1}) := \{ \varphi \in V : \varphi = \Phi'(u)(\alpha_{n-1})\text{ q.e. on }\mathcal{A}(u) \},\]
then $\alpha_n$ satisfies
\begin{equation*}
\begin{aligned}
\alpha_n \in \mathcal S^u(\alpha_{n-1}) &: \langle A\alpha_n - d, \alpha_n - \varphi \rangle = 0 \qquad \forall \varphi \in \mathcal S^u(\alpha_{n-1}).
\end{aligned}
\end{equation*}
In this case, if \ref{itm:PhiDiffLinearInDirection} also holds, then $\alpha_n$ is linear in $d$ with respect to positive linear combinations.
\end{proposition}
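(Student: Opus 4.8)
The plan is to prove the claim by induction on $n$, piggybacking on the expansion formula of Proposition \ref{prop:formulaForSDiff} applied to the solution map $S$ at successive obstacle iterates, exactly as the pre-statement calculation for $q_1,q_2,q_3$ in \eqref{eq:q1}--\eqref{eq:eqSystemforQn} already suggests. I would set up the base case $n=1$ directly from \eqref{eq:q1}: since $q_1 = S(f+td,q_0)$ with $q_0 = u = S(f,u)$ and no perturbation in the obstacle, Proposition \ref{prop:formulaForSDiff}(1) with $v=u$, $b=0$, $h(t)\equiv 0$ gives $q_1 = u + t\,\partial S(f,u)(d) + r(t,0,0,u)$, and the VI for $\delta_1=\alpha_1$ is precisely \eqref{eq:VIForS} with $z=u$, $\psi=u$, which after using the identity $\mathcal A(u)=\mathcal Z(u-\Phi(u))$ from \eqref{eq:AandZ} is the stated VI over $\mathcal K^u(0) = \mathcal K^u(\alpha_0)$ with $\alpha_0 := 0$.

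For the inductive step, assume \eqref{eq:eqForqn} holds at level $n-1$: $q_{n-1}(t) = u + t\alpha_{n-1} + o_{n-1}(t)$ with $t^{-1}o_{n-1}(t)\to 0$. Then $q_n(t) = S(f+td,\, q_{n-1}(t)) = S(f+td,\, u + t\alpha_{n-1} + o_{n-1}(t))$, which is exactly the form $S(f+td, v + tb + h(t))$ with $v=u$, $b=\alpha_{n-1}$, $h(t)=o_{n-1}(t)$. Applying Proposition \ref{prop:formulaForSDiff}(1) yields
\begin{align*}
q_n(t) &= S(f,u) + t\,S'(f,u)(d,\alpha_{n-1}) + r(t,\alpha_{n-1},o_{n-1}(t),u)\\
&= u + t\bigl(\Phi'(u)(\alpha_{n-1}) + \partial S(f,u)(d - A\Phi'(u)(\alpha_{n-1}))\bigr) + r(t,\alpha_{n-1},o_{n-1}(t),u),
\end{align*}
and by the definitions \eqref{eq:deltaNandAlphaN}, \eqref{eq:idForAlphaN}, \eqref{eq:identityforOnAndRn} the middle term is $t\alpha_n$ and the remainder is $o_n(t)$, establishing \eqref{eq:eqForqn} at level $n$. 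The higher-order property $t^{-1}o_n(t)\to 0$ follows from Proposition \ref{prop:formulaForSDiff}(2) applied with $h(t)=o_{n-1}(t)$, whose difference quotient tends to zero by the inductive hypothesis, so that assumption \ref{item:assPhiDerivativeBounded} delivers $t^{-1}r(t,\alpha_{n-1},o_{n-1}(t),u)\to 0$. The VI characterisation of $\alpha_n$ comes from the corresponding statement in Proposition \ref{prop:formulaForSDiff}(1): $\alpha=S'(f,v)(d,b)$ solves the VI over $\mathcal K^z(b)$ with $z=S(f,v)=S(f,u)=u$, $b=\alpha_{n-1}$, and rewriting $\mathcal Z(z-\Phi(v))=\mathcal Z(u-\Phi(u))=\mathcal A(u)$ gives exactly \eqref{eq:alphanTestFunctionSpace}. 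Positive homogeneity in $d$ is inherited from the positive homogeneity of $\alpha(d,b)$ in Proposition \ref{prop:formulaForSDiff}, propagated through the recursion since $\Phi'(u)$ is applied only to $\alpha_{n-1}$ (and one should check, by a sub-induction, that $d\mapsto\alpha_n(d)$ scales correctly — the $\Phi'(u)(\alpha_{n-1})$ contributions inside the argument of $\partial S$ scale the same way as $d$).

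The strict-complementarity and linearity statements are then read off the same way: if $u=S(f,u)$ satisfies strict complementarity with respect to $\alpha_{n-1}$ in the sense of Proposition \ref{prop:formulaForSDiff}(3), part (3) of that proposition immediately upgrades the VI for $\alpha_n$ to the variational equality over $\mathcal S^u(\alpha_{n-1})$; and under \ref{itm:PhiDiffLinearInDirection}, part (3) gives that $(d,b)\mapsto\alpha(d,b)$ is linear, and then linearity of $d\mapsto\alpha_n(d)$ follows by induction using \eqref{eq:idForAlphaN} and \eqref{eq:deltaNandAlphaN} — linearity of $\Phi'(u)$, linearity of $\partial S(f,u)(\cdot)$ in the strict-complementary case, and linearity of the map $d\mapsto d - A\Phi'(u)(\alpha_{n-1}(d))$ in $d$. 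The main obstacle I anticipate is purely bookkeeping rather than conceptual: one must verify that the deeply nested remainder $o_{n-1}(t)$ genuinely satisfies $t^{-1}o_{n-1}(t)\to 0$ so that Proposition \ref{prop:formulaForSDiff}(2) is applicable at each stage — this is where assumption \ref{item:assPhiDerivativeBounded}, together with Lemmas \ref{lem:uniformConvergenceOfhatO} and \ref{lem:uniformConvergenceOfL}, carries the induction, and care is needed because the base point of the $\Phi$-expansion shifts as $v+tb = u + t\alpha_{n-1}$ rather than staying at $u$ (cf. the remark about base-point dependence preceding Theorem \ref{thm:existenceForQVI}); keeping the index $n$ fixed throughout the limit $t\to 0^+$ is what makes this manageable.
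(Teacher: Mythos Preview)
Your proposal is correct and follows essentially the same approach as the paper: an induction on $n$ built directly on Proposition \ref{prop:formulaForSDiff}, with the recursion identities \eqref{eq:idForAlphaN}, \eqref{eq:deltaNandAlphaN}, \eqref{eq:identityforOnAndRn} providing the bookkeeping and the VI/strict-complementarity/linearity statements read off from parts (1)--(3) of that proposition. The paper phrases the VI derivation slightly differently (first writing the VI for $\delta_n$ over $\mathcal K^u$ in \eqref{eq:VIfordelta} and then translating to $\alpha_n$ via the shift $\varphi = \Phi'(u)(\alpha_{n-1}) + v$), but this is the same content you obtain by citing the VI for $\alpha = S'(f,u)(d,\alpha_{n-1})$ directly.
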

\begin{proof}
 Let us prove this by induction. The statement \eqref{eq:eqForqn} clearly holds for $n=1$ by \eqref{eq:eqSystemforQn}. Suppose it holds for $n=k$:
\[q_k(t) = u + t\alpha_k + o_k(t).\]
Then we see that
\begin{align*}
q_{k+1}(t) &= S(f+td, q_k(t))\\
&= S(f+td, u + t\alpha_k + o_k(t))\\
&= S(f,u) + t\left(\Phi'(u)(\alpha_k) + \partial S(f,u)[d-A\Phi'(u)(\alpha_k)]\right) + r(t, \alpha_k, o_k(t))\tag{by Proposition \ref{prop:formulaForSDiff}}\\
&= S(f,u) + t\left(\Phi'(u)(\alpha_k) + \delta_{k+1}\right) + r(t,\alpha_k, o_k(t))\tag{by \eqref{eq:deltaNandAlphaN}}\\
&= u +t\alpha_{k+1} + o_{k+1}(t)\tag{by \eqref{eq:idForAlphaN} and \eqref{eq:identityforOnAndRn}}.
\end{align*}
Since $o_k(t) = r(t,u,\alpha_{k-1},o_{k-1}(t))$, the same argument as in the proof of Proposition \ref{prop:formulaForSDiff} shows the vanishing behaviour of the higher order term. We have shown the inductive step and thus the formula holds for each $n$. 
%
From \eqref{eq:defnDeltan} and \eqref{eq:VIForS}, we see that $\delta_n \in \mathcal{K}^u$ satisfies
\begin{equation}\label{eq:VIfordelta}
\begin{aligned}
&\langle A\delta_{n} - d +  A
\Phi'(u)[\Phi'(u)[...\Phi'(u)[\Phi'(u)(\delta_0)+\delta_1] +\delta_2...] + \delta_{n-1}]
,  \delta_{n} - v\rangle \leq 0 \quad \forall v \in \mathcal{K}^u,\\
&\mathcal{K}^u := \{ \varphi \in V : \varphi \leq 0 \text{ q.e. on } \mathcal{A}(u) \text{ and } \langle Au-f, \varphi \rangle = 0\}
\end{aligned}
\end{equation}
and if $u-\Phi(u)=-S_0(A\Phi(u)-f)$ satisfies strict complementarity (see \eqref{eq:blah11}), so that the critical cone simplifies to the linear subspace 
\begin{equation}\label{eq:KScriticalcone}
\mathcal K^u = \mathcal S^u := \{ \varphi \in V : \varphi = 0 \text{ q.e. on } \mathcal{A}(u) \},
\end{equation}
then for all $v \in  \mathcal S^u$, $\delta_n \in \mathcal{S}^u$  solves
\begin{equation}\label{eq:VEfordelta}
\begin{aligned}
\langle A\delta_n - d + A
\Phi'(u)[\Phi'(u)[...\Phi'(u)[\Phi'(u)(\delta_0)+\delta_1] +\delta_2...] + \delta_{n-1}], \delta_n - v \rangle = 0.
\end{aligned}
\end{equation}
The VI \eqref{eq:VIfordelta} can be written as
\begin{equation}\label{eq:VIforalphan}
\langle A\alpha_n - d, \alpha_n - \Phi'(u)[\alpha_{n-1}]  -v \rangle \leq0 \quad \forall v \in \mathcal{K}^u.
\end{equation}
It is not too difficult see from \eqref{eq:defnDeltan} that $\delta_n$ inherits positive homogeneity from $S_0'$, and $\alpha_n$ in turn inherits positive homogeneity from the $\delta_n$ (see \eqref{eq:alphan}).

Under strict complementarity a formula similar to \eqref{eq:VIforalphan} holds (with the inequality changed to equality and the cone replaced by the linear subspace), and $\delta_{0}$ is linear in $d$. From \eqref{eq:defnDeltan} we can see that linearity of $\delta_{n-1}$ in the direction is not enough to guarantee linearity of $\delta_n$; the additional assumption \ref{itm:PhiDiffLinearInDirection} is indeed needed. From this and \eqref{eq:alphan} linearity of the $\alpha_n$ follows easily.
\end{proof}



\begin{remark}\label{rem:LOTsDependOnBasePoint}
In the formula \eqref{eq:eqForqn}, we could also have approximated $u$ by VI iterates $u_n$. Let $u_n := S(f,u_{n-1})$ with $0 \leq u_0 \leq \bar u$ and define $w_n := S(f, w_{n-1})$ with $w_0 := q_0$ and $q_0$ not yet fixed. In lieu of \eqref{eq:eqForqn} we would get
\[q_n(t) = w_n + t\hat \alpha_n + \hat o_n(t)\]
whence we can see that we must choose $q_0=w_0$ such that
\begin{enumerate}
\item $q_n(t) \to q(t)$ for some  $q(t) \in \mathbf{Q}(f+td)$ 
and
\item $w_n \to u$, where $u \in \mathbf{Q}(f)$.
\end{enumerate}
Clearly, these requirements would lead us to an expression for $\mathbf{Q}(f+td)-\mathbf{Q}(f)$ after passing to the limit $n \to \infty$ in the above equality. Pick $q_0 = u_0$: then $w_n=u_n$, but this choice gives us a problem later on that we are not able to handle  --- we cannot show that the higher order term $\hat o_n$ vanishes in the limit $t \to 0^+$ uniformly with respect to the moving base point which is needed to characterise the limiting directional derivative as a derivative. This would have been interesting because we could have chosen $u_0$ to be a sufficiently large upper solution for $\mathbf{Q}(f)$ and then obtained the conical differentiability result for the selection mapping that picks the maximal solution of the QVI. We aim to investigate this further in future works. 
\end{remark}
Now we must pass to the limit $n \to \infty$ in \eqref{eq:eqForqn}. To do this, we need some convergence results for $\alpha_n$ and $o_n(t)$.

\section{Properties of the VI iterates and some estimates}\label{sec:properties}
In this section, we investigate the monotonicity properties of the directional derivatives $\alpha_n$ and the higher order terms $o_n$, and we also study their behaviour on the coincidence set $\mathcal{A}(u)$. The section is concluded with the consideration of a simplification of the VI satisfied by the $\alpha_n$ under some assumptions including the validity of complementarity for \eqref{eq:QVI}.
\subsection{Monotonicity properties of the directional derivatives}
\begin{lemma}\label{lem:monotonicityOfDirDerAndSum}
Let \ref{itm:PhiDiff} and \ref{item:assPhiDerivativeBounded} hold. The sequences
\[\{\alpha_n\}_{n \in \mathbb{N}} \qquad \text{and} \qquad \{t\alpha_n + o_n(t)\}_{n \in \mathbb{N}}\] are monotonically increasing and non-negative almost everywhere on $X$.
\end{lemma}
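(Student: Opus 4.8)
The plan is to prove both monotonicity claims simultaneously by induction on $n$, because the recursion \eqref{eq:idForAlphaN}--\eqref{eq:deltaNandAlphaN} couples $\alpha_n$ to $\alpha_{n-1}$ through $\delta_n$ and the obstacle side of the VI, while the quantity $t\alpha_n + o_n(t)$ is exactly $q_n(t) - u$ by \eqref{eq:eqForqn}, so its monotonicity should fall out of the monotonicity of the VI iterates $q_n(t)$ themselves. Indeed, Theorem \ref{thm:convergenceOfIterations} already establishes that $q_n(t)$ is increasing in $n$ and that $q_n(t) \geq q_0 = u$, which immediately gives that $\{t\alpha_n + o_n(t)\}_n = \{q_n(t) - u\}_n$ is increasing and non-negative a.e. So the second statement is essentially a restatement of what we already know; the content is in the first statement concerning $\{\alpha_n\}_n$.

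For the $\alpha_n$ sequence, I would argue as follows. Non-negativity: $\alpha_1 = \delta_1 = \partial S(f,u)(d)$ solves the VI \eqref{eq:VIForS} (equivalently \eqref{eq:alphanTestFunctionSpace} with $\alpha_0$ interpreted so that $\Phi'(u)(\alpha_0) = 0$), whose constraint set $\mathcal{K}^u(\alpha_0)$ contains $0$; testing with $v = 2\alpha_1$ and $v=0$ in the standard way and using coercivity and $d \in V^*_+$ gives $\alpha_1 \geq 0$ a.e. (this is really the statement that $\partial S(f,u)(d) \geq 0$ for $d \geq 0$, which follows from T-monotonicity exactly as in the comparison-principle arguments of Section 3). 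For the inductive step, suppose $0 \leq \alpha_{n-1} \leq \alpha_n$ a.e. Since $\Phi'(u)$ is the Hadamard derivative of the increasing map $\Phi$, it is itself monotone (increasing) and maps $V_+$ into $V_+$; hence $0 \leq \Phi'(u)(\alpha_{n-1}) \leq \Phi'(u)(\alpha_n)$ a.e. The key observation is that $\alpha_n$ solves the VI over $\mathcal{K}^u(\alpha_{n-1})$ and $\alpha_{n+1}$ solves the VI over $\mathcal{K}^u(\alpha_n)$, and these are obstacle-type problems whose obstacle $\Phi'(u)(\alpha_{n-1})$ on $\mathcal{A}(u)$ is increasing in $n$ by the inductive hypothesis — so this is a comparison-principle-in-the-obstacle argument analogous to the one used for $q_2(t) \geq q_1(t)$ in the proof of Theorem \ref{thm:convergenceOfIterations}. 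Concretely I would subtract the two variational inequalities, test the $\alpha_n$ inequality with $\alpha_n - (\alpha_n - \alpha_{n+1})^+$ and the $\alpha_{n+1}$ inequality with $\alpha_{n+1} + (\alpha_n - \alpha_{n+1})^+$ (checking these are admissible: this uses that $\Phi'(u)(\alpha_{n-1}) \leq \Phi'(u)(\alpha_n)$ on $\mathcal{A}(u)$ and the equality constraint $\langle Au - f, \cdot\rangle$, which is where care is needed), add, and invoke T-monotonicity of $A$ to conclude $(\alpha_n - \alpha_{n+1})^+ = 0$, i.e. $\alpha_n \leq \alpha_{n+1}$.

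The main obstacle I anticipate is verifying admissibility of the test functions in the sets $\mathcal{K}^u(\alpha_{n-1})$ and $\mathcal{K}^u(\alpha_n)$, since these involve both a q.e. pointwise inequality on $\mathcal{A}(u)$ \emph{and} the linear equality constraint $\langle Au-f, \varphi - \Phi'(u)(\alpha_{n-1})\rangle = 0$; one must check the truncations $\alpha_n \mp (\alpha_n - \alpha_{n+1})^+$ respect both. The equality constraint is the delicate part — one typically rewrites it using that $Au - f \geq 0$ in $V^*$ and is supported (in the capacity sense) on $\mathcal{A}(u)$, so that the equality is equivalent to a sign condition that is preserved under the relevant truncation. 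The pointwise-on-$\mathcal{A}(u)$ part is handled by the quasi-continuity machinery referenced after \eqref{eq:assDensity} together with the inductive hypothesis $\Phi'(u)(\alpha_{n-1}) \leq \Phi'(u)(\alpha_n)$. Once admissibility is in hand, the remainder is the routine T-monotonicity computation. Alternatively, and perhaps more cleanly, one can avoid touching the cone structure directly by exploiting the representation $\alpha_n = \Phi'(u)(\alpha_{n-1}) + \delta_n$ with $\delta_n = \partial S(f,u)(d - A\Phi'(u)(\alpha_{n-1}))$ and the fact that $\partial S(f,u)(\cdot)$ is itself order-preserving in its argument in the appropriate sense (again a consequence of Mignot's VI \eqref{eq:VIForS} and T-monotonicity); combined with the monotonicity of $\Phi'(u)$ and the inductive hypothesis this propagates $\alpha_{n-1} \leq \alpha_n \Rightarrow \alpha_n \leq \alpha_{n+1}$, and I would present whichever of these two routes turns out to require the least fuss about capacity-theoretic details.
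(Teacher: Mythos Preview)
You correctly observe that $t\alpha_n + o_n(t) = q_n(t) - u$ inherits monotonicity and non-negativity directly from Theorem~\ref{thm:convergenceOfIterations}; this matches the paper exactly. For $\{\alpha_n\}$, however, the paper's argument is far simpler than either route you outline: having established
\[
t\alpha_{n+1} + o_{n+1}(t) \geq t\alpha_n + o_n(t) \geq 0 \quad\text{a.e.},
\]
one simply divides by $t$ and sends $t \to 0^+$. Proposition~\ref{prop:relationBetweenQnAndUn} (which is where \ref{item:assPhiDerivativeBounded} enters) gives $t^{-1}o_n(t) \to 0$ in $V$ for each fixed $n$, hence along a subsequence a.e., and the limit yields $\alpha_{n+1} \geq \alpha_n \geq 0$ a.e. This is the whole reason \ref{item:assPhiDerivativeBounded} is in the hypotheses, and you never use it.

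Your inductive VI-comparison route runs into a genuine obstruction at precisely the point you flag. Admissibility of $\alpha_n - (\alpha_n - \alpha_{n+1})^+$ in $\mathcal{K}^u(\alpha_{n-1})$ and of $\alpha_{n+1} + (\alpha_n - \alpha_{n+1})^+$ in $\mathcal{K}^u(\alpha_n)$ both reduce to verifying $\langle Au - f, (\alpha_n - \alpha_{n+1})^+\rangle = 0$. Since $Au - f \leq 0$ in $V^*$ and is supported (in the capacity sense) on $\mathcal{A}(u)$, this equality essentially asks that $(\alpha_n - \alpha_{n+1})^+ = 0$ q.e.\ on $\mathcal{A}(u)$, which is part of what you are trying to prove. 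Your alternative via order-preservation of $\partial S(f,u)(\cdot)$ is also problematic, because it would require comparing $d - A\Phi'(u)(\alpha_{n-1})$ and $d - A\Phi'(u)(\alpha_n)$ in the dual cone $V^*_+$, and $A$ does not preserve pointwise order. The paper's limit argument avoids all of this.
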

\begin{proof}
Since the left hand side of $q_n(t) = u+t\alpha_n + o_n(t)$ is increasing in $n$, as is $t\alpha_n + o_n(t)$. Thus
\begin{equation*}\label{eq:monotonicityOfSum}
t\alpha_{n+1} + o_{n+1}(t) \geq t\alpha_n + o_n(t) \geq 0
\end{equation*}
with the second inequality since $q_n(t) \geq u$ (for this we did not need \ref{item:assPhiDerivativeBounded}). 
Dividing by $t$ and sending to zero 
we find using Proposition \ref{prop:relationBetweenQnAndUn} that $\alpha_{n+1} \geq \alpha_n \geq 0$ for all $n \geq 1.$
\end{proof}
By definition, we know that $\delta_n \leq 0$ q.e. on the set $\mathcal A(u)$ (see \eqref{eq:VIfordelta})
which implies that $0 \leq \alpha_n \leq \Phi'(u)(\alpha_{n-1})$ a.e. on $\mathcal A(u)$. We now show that $0 \leq \Phi'(u)(\alpha_n)$ a.e. not only on $\mathcal{A}(u)$ but on the whole of $X$ thanks to the increasing nature of $\Phi$.

\begin{lemma}\label{lem:zeroIsValid}
Under the assumptions in the previous lemma, we have
\[\Phi'(u)(\alpha_n) \geq 0 \quad \text{a.e. on $X$}.\]
\end{lemma}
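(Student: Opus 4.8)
The plan is to exploit the monotonicity already established together with the increasing property of $\Phi$. From Lemma \ref{lem:monotonicityOfDirDerAndSum} we know $\alpha_n \geq 0$ a.e.\ on $X$ for all $n \geq 1$; we want to upgrade this to $\Phi'(u)(\alpha_n) \geq 0$ a.e.\ on all of $X$, not merely q.e.\ on the coincidence set $\mathcal A(u)$. The natural route is to write $\Phi'(u)(\alpha_n)$ as a limit of difference quotients of $\Phi$ and use that $\Phi$ is increasing.

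\medskip

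\noindent First I would recall that by \ref{itm:PhiDiff},
\[
\Phi'(u)(\alpha_n) = \lim_{t \to 0^+} \frac{\Phi(u + t\alpha_n) - \Phi(u)}{t} \quad\text{in } V.
\]
Since $\alpha_n \geq 0$ a.e., we have $u + t\alpha_n \geq u$ a.e.\ for every $t > 0$, and hence by the monotonicity hypothesis on $\Phi$ (which is always assumed) $\Phi(u + t\alpha_n) \geq \Phi(u)$ a.e.\ on $X$. Therefore each difference quotient $t^{-1}(\Phi(u + t\alpha_n) - \Phi(u))$ is non-negative a.e.\ on $X$, i.e.\ it lies in the closed convex cone $V_+$. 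Since $V_+$ is closed in $V$ (it is the preimage of the closed cone $H_+$ under the continuous embedding $V \hookrightarrow L^2(X;\xi)$), and the difference quotients converge strongly in $V$ to $\Phi'(u)(\alpha_n)$ as $t \to 0^+$, the limit also belongs to $V_+$. This gives $\Phi'(u)(\alpha_n) \geq 0$ a.e.\ on $X$, as required.

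\medskip

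\noindent The only point requiring a little care is that the convergence of the difference quotients is in the $V$-norm (which it is, by \ref{itm:PhiDiff}), so that strong closedness of $V_+$ applies directly; weak closedness of $V_+$ (it is convex and closed, hence weakly closed) would suffice even if one only had weak convergence. I do not expect any genuine obstacle here: the statement is essentially a clean corollary of the $n$-independent sign of $\alpha_n$ from Lemma \ref{lem:monotonicityOfDirDerAndSum} combined with the built-in monotonicity of $\Phi$, passed through the limit defining $\Phi'(u)$.
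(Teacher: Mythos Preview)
Your proof is correct and follows essentially the same approach as the paper: both arguments use that $\alpha_n \geq 0$ a.e.\ implies $\Phi(u+t\alpha_n) \geq \Phi(u)$ a.e.\ by monotonicity of $\Phi$, then pass to the limit $t \to 0^+$ in the difference quotient. The paper writes this via the expansion $\Phi'(u)(\alpha_n) = t^{-1}(\Phi(u+t\alpha_n)-\Phi(u)) - t^{-1}l(t,\alpha_n,u) \geq -t^{-1}l(t,\alpha_n,u) \to 0$, whereas you phrase it as convergence of non-negative difference quotients in the closed cone $V_+$; these are the same argument in slightly different dress.
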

\begin{proof}
As $\Phi$ is increasing,
\[\Phi'(u)(\alpha_n) = \frac{\Phi(u+t\alpha_n)-\Phi(u)-l(t, \alpha_n,u)}{t} \geq-\frac{l(t, \alpha_n,u)}{t}\]
since the $\alpha_n$ are non-negative a.e. on $X$. 
 Sending $t \to 0^+$ yields the desired conclusion a.e. on $X$. 
\end{proof}
\begin{corollary}Under the assumptions in the previous lemma, we have
\[\alpha_n \geq \delta_{n} \quad \text{a.e. on $X$}\]
\end{corollary}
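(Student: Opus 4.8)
The claim $\alpha_n \geq \delta_n$ a.e. on $X$ is an immediate consequence of the recursion formula \eqref{eq:idForAlphaN} together with Lemma \ref{lem:zeroIsValid}. The plan is as follows. For $n = 1$ there is nothing to prove, since $\alpha_1 = \delta_1$ by \eqref{eq:alphan}. For $n \geq 2$, recall from \eqref{eq:idForAlphaN} that
\[\alpha_n = \Phi'(u)(\alpha_{n-1}) + \delta_n,\]
so that
\[\alpha_n - \delta_n = \Phi'(u)(\alpha_{n-1}).\]
Thus the inequality $\alpha_n \geq \delta_n$ a.e. on $X$ is equivalent to $\Phi'(u)(\alpha_{n-1}) \geq 0$ a.e. on $X$, which is precisely the content of Lemma \ref{lem:zeroIsValid} applied with index $n-1$ (valid since $n - 1 \geq 1$).

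I would therefore simply write out these two lines, invoking \eqref{eq:idForAlphaN} and Lemma \ref{lem:zeroIsValid}, and note that the hypotheses carry over: the corollary is stated "under the assumptions in the previous lemma", i.e. \ref{itm:PhiDiff} and \ref{item:assPhiDerivativeBounded}, which are exactly what Lemma \ref{lem:zeroIsValid} (and the Lemma \ref{lem:monotonicityOfDirDerAndSum} it rests on) requires.

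There is essentially no obstacle here; the only point to be careful about is the case distinction at $n = 1$, where the recursion \eqref{eq:idForAlphaN} does not apply but the conclusion holds trivially by the definition of $\alpha_1$. One could also remark that, combined with the feasibility condition $\alpha_n \leq \Phi'(u)(\alpha_{n-1})$ q.e. on $\mathcal{A}(u)$ noted just before Lemma \ref{lem:zeroIsValid} and the bound $\delta_n \leq 0$ q.e. on $\mathcal{A}(u)$ from \eqref{eq:VIfordelta}, this pins down the sign behaviour of both sequences on and off the coincidence set, but that is not needed for the statement itself.
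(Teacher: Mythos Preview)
Your proof is correct and follows exactly the same approach as the paper, which simply writes ``Follows from $\alpha_n = \delta_{n}+\Phi'(u)(\alpha_{n-1})$.'' Your version is slightly more detailed in handling the $n=1$ case explicitly, but the argument is identical.
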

\begin{proof}
Follows from $\alpha_n = \delta_{n}+\Phi'(u)(\alpha_{n-1})$.
\end{proof}
\subsection{Estimates on the coincidence set}
The next lemma shows that the derivatives $\alpha_n$ vanish on the coincidence set when $\Phi$ is of superposition type.
\begin{lemma}\label{lem:alphaNOnI}Let \ref{itm:PhiDiff} and \ref{item:assPhiDerivativeBounded} hold. We have
\[\alpha_1 = 0 \quad \text{a.e. on $\mathcal{A}(u)$.}\]
Also, if $\Phi$ is a superposition operator, we have for each $n$ that 
\[\alpha_n = 0 \quad\text{ a.e. on $\mathcal{A}(u)$}.\]
\end{lemma}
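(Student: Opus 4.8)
The plan is to treat the base case and the superposition case separately, since the first requires no structural assumption on $\Phi$ while the second exploits the pointwise nature of superposition operators together with the recursion \eqref{eq:idForAlphaN}. For $\alpha_1$: by Proposition \ref{prop:relationBetweenQnAndUn} (or equivalently \eqref{eq:VIfordelta} with $n=1$) we have $\alpha_1 = \delta_1$ and $\delta_1 \leq 0$ q.e. on $\mathcal{A}(u)$; on the other hand Lemma \ref{lem:monotonicityOfDirDerAndSum} gives $\alpha_1 \geq 0$ a.e. on $X$. Combining these two one-sided bounds forces $\alpha_1 = 0$ a.e. on $\mathcal{A}(u)$. (Strictly one should note that q.e. statements imply a.e. statements since sets of capacity zero are $\xi$-null.)

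For the superposition case I would argue by induction on $n$, the case $n=1$ being done. Assume $\alpha_{n-1} = 0$ a.e. on $\mathcal{A}(u)$. The recursion $\alpha_n = \Phi'(u)(\alpha_{n-1}) + \delta_n$ together with $\delta_n \leq 0$ q.e. on $\mathcal{A}(u)$ shows it suffices to prove $\Phi'(u)(\alpha_{n-1}) = 0$ a.e. on $\mathcal{A}(u)$ (then $\alpha_n \leq 0$ q.e. there, while $\alpha_n \geq 0$ a.e. by Lemma \ref{lem:monotonicityOfDirDerAndSum}, giving equality). Now because $\Phi$ is a superposition operator, so is its directional derivative in the appropriate pointwise sense: informally, $\Phi'(u)(h)(x)$ depends only on $u(x)$ and $h(x)$, and in particular $\Phi'(u)(h)(x) = 0$ wherever $h(x) = 0$. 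I would make this precise by writing, for a.e. $x \in \mathcal{A}(u)$ where $\alpha_{n-1}(x) = 0$,
\[
\Phi'(u)(\alpha_{n-1})(x) = \lim_{t \to 0^+} \frac{\Phi(u + t\alpha_{n-1})(x) - \Phi(u)(x)}{t} = \lim_{t \to 0^+} \frac{\Phi\big((u+t\alpha_{n-1})(x)\big) - \Phi\big(u(x)\big)}{t} = 0,
\]
using that $(u + t\alpha_{n-1})(x) = u(x)$ at such $x$; one passes from the $V$-limit defining $\Phi'(u)(\alpha_{n-1})$ to the pointwise a.e. limit along a subsequence via the continuous embedding $V \hookrightarrow L^2(X;\xi)$. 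Then $\alpha_n = 0$ a.e. on $\mathcal{A}(u)$, closing the induction.

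The main obstacle is the last step: justifying rigorously that $\Phi'(u)(\alpha_{n-1})$ vanishes pointwise a.e. wherever $\alpha_{n-1}$ does. The subtlety is that $\Phi'(u)(\alpha_{n-1})$ is defined as a $V$-limit of difference quotients, not pointwise, so one cannot naively evaluate it at a point; the fix is to pass to an a.e.-convergent subsequence of the difference quotients $t_k^{-1}(\Phi(u+t_k\alpha_{n-1}) - \Phi(u))$ and observe that on the set $\{\alpha_{n-1} = 0\}$ every term of this subsequence is identically zero, so the limit is zero there as well. One should also be slightly careful about whether $\mathcal{A}(u)$, $\{\alpha_{n-1}=0\}$, and the exceptional null sets are measured by capacity or by $\xi$; throughout, capacity-zero sets are $\xi$-null, so all the q.e. facts feed cleanly into a.e. conclusions, and the statement as phrased (a.e. on $\mathcal{A}(u)$) is the natural common denominator.
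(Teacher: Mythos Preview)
Your proof is correct but follows a genuinely different route from the paper's. The paper does not use the VI constraint $\delta_n \le 0$ q.e.\ on $\mathcal{A}(u)$ at all; instead it goes back to the expansion $q_n(t) = u + t\alpha_n + o_n(t)$ and the chain $q_n \le \Phi(q_{n-1}) \le \dots \le \Phi^n(q_0) = \Phi^n(u)$ to obtain the two-sided bound
\[
0 \le t\alpha_n \le \Phi^n(u) - u - o_n(t).
\]
For $n=1$ the right-hand side is $-o_1(t)$ on $\mathcal{A}(u)$, and the sandwich gives $\alpha_1 = 0$ after dividing by $t$ and sending $t\to 0^+$. In the superposition case, the key pointwise fact is that $u(x) = \Phi(u(x))$ forces $u(x) = \Phi^m(u(x))$ for every $m$, so $\Phi^n(u) - u$ vanishes on $\mathcal{A}(u)$ and the same sandwich applies for all $n$ at once, with no induction.

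Your argument instead combines three already-established ingredients (nonnegativity of $\alpha_n$ from Lemma~\ref{lem:monotonicityOfDirDerAndSum}, the q.e.\ sign constraint on $\delta_n$ from \eqref{eq:VIfordelta}, and the recursion \eqref{eq:idForAlphaN}) and reduces the inductive step to the observation that a superposition derivative $\Phi'(u)(h)$ vanishes a.e.\ where $h$ does; your handling of the $V$-limit versus pointwise-limit issue via an a.e.-convergent subsequence is the right fix. The paper's route has the practical advantage that the very same sandwich inequality is reused immediately afterwards to prove Lemma~\ref{lem:behaviourOfLOTonI} ($o_n = 0$ a.e.\ on $\mathcal{A}(u)$), so the two lemmas share a single computation; your approach is cleaner and more modular but does not automatically yield that companion result.
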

\begin{proof}
From $t\alpha_n = q_n(t)-u-o_n(t)$, since $q_n \leq \Phi(q_{n-1}) \leq \Phi^2(q_{n-2}) \leq ... \leq \Phi^n(q_0) = \Phi^n(u)$, we find
\begin{equation}\label{eq:4}
0 \leq t\alpha_n \leq \Phi^n(u)-u - o_n(t).
\end{equation}
On the set $\mathcal{A}(u)$, we get $0 \leq t\alpha_1 \leq -o_n(t)$
and dividing here by $t$ and sending to zero, we see by the sandwich theorem that $\alpha_1 = 0$ on $\mathcal{A}(u)$. If $\Phi$ is a superposition operator, observe that if $x$ is such that $u(x) = \Phi(u(x))$, then in fact 
\begin{equation}\label{eq:fact}
u(x) = \Phi^m(u(x))\quad\text{for any $m \in \mathbb{N}$}
\end{equation}
(that is $\Phi$ composed with itself $m$ times). Using this fact on the right hand side of \eqref{eq:4} gives us the result.
\end{proof}
\begin{corollary}\label{cor:deltan}Under the assumptions in the previous lemma, if $\Phi$ is a superposition operator, then
\[\delta_n = 0 \quad\text{a.e. on $\mathcal{A}(u).$}\]
\end{corollary}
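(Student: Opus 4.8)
The plan is to read the corollary off directly from two facts already in hand: the recursion \eqref{eq:idForAlphaN} relating $\alpha_n$, $\alpha_{n-1}$ and $\delta_n$, and the vanishing of $\alpha_n$ on the coincidence set established in Lemma~\ref{lem:alphaNOnI} under the superposition hypothesis. Concretely, I would first recall the identity
\[
\alpha_n = \Phi'(u)(\alpha_{n-1}) + \delta_n \quad\text{a.e. on }X,
\]
which is \eqref{eq:idForAlphaN} for $n\geq 2$ and, upon setting $\alpha_0 := 0$, is also consistent with \eqref{eq:q1} and \eqref{eq:deltaNandAlphaN} for $n=1$. Since all the quantities appearing are elements of $V\subset L^2(X;\xi)$, this is a genuine pointwise a.e.\ equality of functions, so it may be restricted to the measurable set $\mathcal{A}(u)$.

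Next I would invoke Lemma~\ref{lem:alphaNOnI}: because $\Phi$ is a superposition operator, $\alpha_n = 0$ a.e.\ on $\mathcal{A}(u)$ for every $n$ (this is where the superposition structure is genuinely used, via the iterated fixed-point fact $u = \Phi^m(u)$ on $\mathcal{A}(u)$ that feeds into the sandwich estimate \eqref{eq:4}). Restricting the displayed identity to $\mathcal{A}(u)$, the left-hand side vanishes, and rearranging gives $\delta_n = -\Phi'(u)(\alpha_{n-1})$ a.e.\ on $\mathcal{A}(u)$, which is precisely the claim.

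There is essentially no obstacle here; the proof is a one-line rearrangement. The only minor points to verify are bookkeeping ones: that the base case $n=1$ reads $\delta_1 = -\Phi'(u)(\alpha_0) = 0$ a.e.\ on $\mathcal{A}(u)$, which agrees with the unconditional assertion $\alpha_1 = \delta_1 = 0$ a.e.\ on $\mathcal{A}(u)$ from the first part of Lemma~\ref{lem:alphaNOnI}; and, as a consistency check, that the resulting formula is compatible with $\delta_n \leq 0$ q.e.\ on $\mathcal{A}(u)$ (from \eqref{eq:VIfordelta}) together with $\Phi'(u)(\alpha_{n-1}) \geq 0$ a.e.\ (from Lemma~\ref{lem:zeroIsValid}).
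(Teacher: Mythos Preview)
Your proposal is correct and follows exactly the paper's approach: the paper's proof is the single line ``This follows from $\alpha_n = \Phi'(u)(\alpha_{n-1}) + \delta_{n}$ and Lemma~\ref{lem:alphaNOnI}.'' Your additional bookkeeping remarks (the $n=1$ base case and the sign-consistency check) are fine but not needed.
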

\begin{proof}
This follows from $\alpha_n = \Phi'(u)(\alpha_{n-1}) + \delta_{n}$ and Lemma \ref{lem:alphaNOnI}.
\end{proof}\begin{lemma}\label{lem:behaviourOfLOTonI}
Under the assumptions in the previous lemma, we have
\[\frac{o_1(t)}{t} = 0 \quad\text{a.e. on $\mathcal{A}(u)$},\] 
and in the superposition case 
\[\frac{o_{n}(t)}{t} =0 \quad\text{a.e. on $\mathcal{A}(u)$}.\]
\end{lemma}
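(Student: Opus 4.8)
The plan is to read the conclusion off directly from the expansion formula of Proposition~\ref{prop:relationBetweenQnAndUn} combined with the two–sided bound already recorded in \eqref{eq:4}, so that $o_n(t)/t$ vanishes on $\mathcal{A}(u)$ \emph{identically in $t$}, not merely in the limit $t\to 0^+$. First I would rewrite \eqref{eq:eqForqn} as $o_n(t) = q_n(t) - u - t\alpha_n$, and recall that, by feasibility $q_n(t)\le \Phi(q_{n-1}(t))\le\dots\le\Phi^n(q_0)=\Phi^n(u)$ (iterated using that $\Phi$ is increasing) together with $q_n(t)\ge q_0 = u$ from the monotonicity established in Theorem~\ref{thm:convergenceOfIterations}, one has — this is just \eqref{eq:4} rearranged —
\[
0 \;\le\; t\alpha_n + o_n(t) \;=\; q_n(t) - u \;\le\; \Phi^n(u) - u \qquad\text{a.e. on } X.
\]

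Next I would restrict to the coincidence set $\mathcal{A}(u)$. For $n=1$ the right-hand side vanishes there by the very definition of $\mathcal{A}(u)$, since $u=\Phi(u)$ on $\mathcal{A}(u)$; for general $n$ when $\Phi$ is a superposition operator the right-hand side vanishes by the elementary fact \eqref{eq:fact}, namely $\Phi^n(u)=u$ pointwise wherever $u=\Phi(u)$. In either case the sandwich forces $t\alpha_n(x)+o_n(t)(x)=0$ for a.e.\ $x\in\mathcal{A}(u)$. Since Lemma~\ref{lem:alphaNOnI} gives $\alpha_1=0$ a.e.\ on $\mathcal{A}(u)$ (and, in the superposition case, $\alpha_n=0$ a.e.\ on $\mathcal{A}(u)$ for every $n$), we obtain $o_n(t)=0$, hence $o_n(t)/t=0$, a.e.\ on $\mathcal{A}(u)$, which is exactly the assertion.

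I do not expect a genuine obstacle here: the statement is essentially a corollary of Lemma~\ref{lem:alphaNOnI} and the pointwise estimate \eqref{eq:4}, and no hypothesis beyond \ref{itm:PhiDiff} and \ref{item:assPhiDerivativeBounded} (already in force through Proposition~\ref{prop:relationBetweenQnAndUn} and Lemma~\ref{lem:alphaNOnI}) is needed. The only minor point worth a line of comment is the bookkeeping between $\mathcal{A}(u)$ as a capacity-theoretic set and its Lebesgue-a.e.\ representative, but since every inequality used above holds a.e., this causes no difficulty.
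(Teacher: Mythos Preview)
Your proposal is correct and follows essentially the same route as the paper: both arguments use the identity $o_n(t)=q_n(t)-u-t\alpha_n$, the sandwich $u\le q_n(t)\le\Phi^n(u)$, the fact that $\Phi^n(u)=u$ on $\mathcal{A}(u)$ (trivially for $n=1$, via \eqref{eq:fact} in the superposition case), and Lemma~\ref{lem:alphaNOnI} to eliminate the $t\alpha_n$ term. The paper's version is just slightly terser but otherwise identical in substance.
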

\begin{proof}
In the superposition case, on $\mathcal{A}(u)$, we have $o_n(t) = q_n(t)-u-t\alpha_n = q_n-u(t)$. Since $q_n \geq u$, we have $o_n(t) \geq 0$ but also $o_n(t) \leq \Phi^n(u)-u = 0$ due to \eqref{eq:fact}. Hence $o_n(t) = 0$. In the general case, the above is true only for $n=1$.
\end{proof}
\subsection{Simplification under regularity of the solution}
The duality pairing appearing in the definition of the set $\mathcal{K}^u(\alpha_{n-1})$ in \eqref{eq:alphanTestFunctionSpace} restricts the class of feasible test functions due to the inconvenient term $\Phi'(u)(\alpha_{n-1})$ which arises because the problem is quasi-variational in nature. Here we shall show that if we assume some regularity then this term can essentially be removed and thus $\mathcal{K}^u(\alpha_{n-1})$ can be simplified. We introduce the following hypotheses, which we use in some of what follows.
\begin{enumerate}[label=(\textbf{C\arabic*})]
\item\label{itm:complementarity} Let $Au, f \in H$ and $(Au-f)(u-\Phi(u))=0$ a.e. in $X$ (i.e., complementarity holds for \eqref{eq:QVI}).
\item\label{itm:propertyOfDeriv} Let $\Phi'(u)(\alpha_n) \geq 0$ q.e. on $\mathcal{A}(u)$ and
$\Phi'(u)(\alpha_n) = 
0$ a.e. in $\mathcal{A}(u)$.
\end{enumerate}
If $\Phi$ is a superposition type operator, then $\Phi'(u)(\alpha_{n})(x) = \Phi'(u(x))\alpha_{n}(x)$ 
so the second part of \ref{itm:propertyOfDeriv} holds by Lemma \ref{lem:alphaNOnI}.


\begin{remark}[Regularity for the QVI problem]\label{rem:regularity}
Let us consider the $H^1(\Omega)$ setting like in the example on the bounded domain $\Omega$ in \S \ref{sec:examples}. The condition \ref{itm:complementarity} appears hard to check in general as it needs $H^2$ and $C^0$ regularity of the solution $u$. For the continuity, we can argue as follows like in \cite{LionsBensoussan}. Let us suppose that $\Phi\colon C^0(\bar \Omega) \to C^0(\bar \Omega)$, \ref{itm:PhiConcaveEtc} and $f \in L^\infty_+$ hold. Define $u_n = S(f, u_{n-1})$ with $u_0 = 0$, which we know converges to the solution $u \in \mathbf{Q}(f)$ in $L^\infty(\Omega)$ and weakly in $V$. By the assumption on $\Phi$, we find that $u_1 \in C^0(\bar \Omega)$, since the solution of the obstacle problem with continuous obstacle is also continuous \cite[\S 2, Corollary 5.3]{LionsBensoussan}.
Thus $u_n \in C^0(\bar \Omega)$ for all $n$, and the convergence $u_n \to u$ in $L^\infty$ implies that $u \in C^0(\bar \Omega)$. 

Regularity in $H^2$ appears much more involved in general. If $\Phi$ carries $H^2(\Omega)$ into $H^2(\Omega)$ we obtain the regularity $u_n \in H^2(\Omega)$ by \cite{BrezisStamp}. Furthermore, the following estimate holds \cite[\S 5, Proposition 2.2]{Rodrigues}:
\begin{align}
\nonumber \norm{Au_n}{H} &\leq \norm{f}{H} + \norm{(A\Phi(u_{n-1})-f)^+}{H}\\
&\leq 2\norm{f}{H} + \norm{A\Phi(u_{n-1})}{H},\label{eq:ctsDependenceRod}
\end{align}
so we need additional assumptions on $\Phi$ to deduce boundedness of the sequence $\{Au_n\}_{n \in \mathbb{N}}$ and hence the regularity $u \in H^2(\Omega)$. As an example, consider the case $\Phi = (-\Delta)^{-1}$ which we discussed before with $A=-\Delta + I$. Then 
\[A\Phi(u_{n-1}) = A(-\Delta)^{-1}u_{n-1} = u_{n-1}+(-\Delta)^{-1}u_{n-1}\]
and the continuous dependence result for elliptic PDEs gives, after bounding the $u_{n-1}$ in norm by the right hand side data $f$,
\[\norm{A\Phi(u_{n-1})}{H} \leq C\norm{f}{H}.\]
Plugging this into \eqref{eq:ctsDependenceRod} leads to a successful resolution of $H^2$-regularity.

See \cite{LionsBensoussan,Mosco1977} for some alternative techniques for regularity (for special cases of $\Phi$) which maybe adaptable to a more general setting.
  \end{remark}
\begin{remark}\label{rem:propertyOfDeriv}
Since $0 \leq \alpha_n =(q (t)-u-o_n(t))\slash{t}\leq (\Phi^n(u)-u-o_n(t))\slash{t}$, if $\Phi'(u)(\cdot)$ is increasing, then the equality assumption of \ref{itm:propertyOfDeriv} holds if
\[\lim_{t \to 0^+}\Phi'(u)\left[\frac{\Phi^n(u)-u-o_n(t)}{t}\right](x) \leq 0\qquad\text{for a.e. $x \in \mathcal{A}(u)$}.\]
\end{remark}
Let us recall the following facts about quasi-everywhere defined sets on a bounded domain $\Omega$ \cite{Wachsmuth}, \cite[\S 6.4.3]{Bonnans}, \cite[\S 8.6]{Delfour}, \cite[\S 2.1]{FukushimaBook}:
\begin{enumerate}
\item  If $\Omega_0 \subset \Omega$ is quasi-open and $v\colon \Omega \to \mathbb{R}$ is quasi-continuous, then $v \geq 0$ a.e. on $\Omega_0$ implies that $v \geq 0$ q.e. on $\Omega_0$.
\item Every element of $V$ has a quasi-continuous representative.
\item $\Omega$ is quasi-open.
\item $\Omega_0 \subset \Omega$ is quasi-closed if $\Omega \setminus \Omega_0$ is quasi-open.
\item The set $\{u < \Phi(u)\}$ is quasi-open. Hence $\Omega \setminus \{u < \Phi(u)\}$ is quasi-closed.
\item If $\Omega_0$ is an open set and $f_1=f_2$ a.e. on $\Omega_0$ then $f_1=f_2$ q.e. on $\Omega_0$. 
\end{enumerate}
So, in the situation of $V=H^1_0(\Omega)$, \ref{itm:propertyOfDeriv} does not necessarily imply that $\Phi'(u)(\alpha_n)=0$ q.e. on $\mathcal{A}(u)$ since $\mathcal{A}(u)$ is not open.

\begin{lemma}\label{lem:rewriteOfConstraintSet}
Let \ref{itm:PhiDiff}, \ref{itm:complementarity} and\ref{itm:propertyOfDeriv} hold.  
Then the set $\mathcal{K}^u(\alpha_{n-1})$ defined in \eqref{eq:alphanTestFunctionSpace} 
can be written as
\[\mathcal{K}^u(\alpha_{n-1}) = \left\{ \varphi \in V : \varphi \leq \Phi'(u)(\alpha_{n-1})\text{ q.e. on }\mathcal{A}(u)\text{ and }  \int_{\mathcal{A}(u)} (Au-f)\varphi= 0\right\}.\]
\end{lemma}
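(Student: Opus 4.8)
The plan is to exploit \ref{itm:complementarity} to turn the duality pairing $\langle Au-f,\cdot\rangle$ into an $L^2(X;\xi)$ integral, to localise that integral to the coincidence set using the complementarity identity, and then to drop the contribution of $\Phi'(u)(\alpha_{n-1})$ on $\mathcal{A}(u)$ via \ref{itm:propertyOfDeriv}. Since both descriptions of $\mathcal{K}^u(\alpha_{n-1})$ carry exactly the same pointwise constraint $\varphi \leq \Phi'(u)(\alpha_{n-1})$ q.e. on $\mathcal{A}(u)$, it is enough to show that for \emph{every} $\varphi \in V$ one has
\[\langle Au-f,\varphi-\Phi'(u)(\alpha_{n-1})\rangle = \int_{\mathcal{A}(u)}(Au-f)\varphi,\]
which immediately makes the two equality conditions defining the sets equivalent.

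First I would record that, by \ref{itm:complementarity}, $Au-f \in H = L^2(X;\xi)$, so for any $\psi \in V \subset H$ the duality pairing reduces to $\langle Au-f,\psi\rangle = \int_X (Au-f)\psi$, the integrand belonging to $L^1(X;\xi)$ by Cauchy--Schwarz. Next, since $u \leq \Phi(u)$ a.e., the complement $X\setminus\mathcal{A}(u)$ coincides up to a $\xi$-null set with $\{u<\Phi(u)\}$, where $u-\Phi(u)\neq 0$; hence the complementarity relation $(Au-f)(u-\Phi(u))=0$ a.e. forces $Au-f=0$ a.e. on $X\setminus\mathcal{A}(u)$, and therefore $\int_X (Au-f)\psi = \int_{\mathcal{A}(u)}(Au-f)\psi$ for every $\psi \in V$.

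Applying this with $\psi := \varphi-\Phi'(u)(\alpha_{n-1}) \in V$ gives
\[\langle Au-f,\varphi-\Phi'(u)(\alpha_{n-1})\rangle = \int_{\mathcal{A}(u)}(Au-f)\varphi \;-\; \int_{\mathcal{A}(u)}(Au-f)\Phi'(u)(\alpha_{n-1}),\]
and since \ref{itm:propertyOfDeriv} tells us that $\Phi'(u)(\alpha_{n-1})=0$ a.e. on $\mathcal{A}(u)$, the last integral vanishes. This yields the displayed identity, and hence the two equality conditions — and so the two sets — agree. (For $n=1$ one reads $\alpha_0:=0$, for which the claim is trivial.)

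I do not expect a real obstacle here: the argument is essentially bookkeeping once the three hypotheses are in hand. The only points needing mild care are verifying the integrability that legitimises splitting $\int_X$ into $\int_{\mathcal{A}(u)} + \int_{X\setminus\mathcal{A}(u)}$, and making explicit (via \eqref{eq:AandZ} and $u\le\Phi(u)$) why complementarity pins the support of $Au-f$ to $\mathcal{A}(u)$ up to a null set; the quasi-everywhere constraint on $\varphi$ is untouched throughout, being common to both formulations.
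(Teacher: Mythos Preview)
Your argument is correct and is essentially identical to the paper's own proof: both use \ref{itm:complementarity} to rewrite the duality pairing as an $L^2$ integral, split off $\{u<\Phi(u)\}$ where $Au-f=0$ a.e., and then invoke \ref{itm:propertyOfDeriv} to drop $\Phi'(u)(\alpha_{n-1})$ on $\mathcal{A}(u)$. Your write-up merely spells out a few more details (integrability, the common pointwise constraint) than the paper does.
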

\begin{proof}
Using the complementarity condition, we see that the left hand side of the equality constraint in \eqref{eq:alphanTestFunctionSpace} becomes
\begin{align*}
&\langle Au-f, \varphi-\Phi'(u)(\alpha_{n-1}) \rangle\\
&\quad= \int_{X} (Au-f)(\varphi-\Phi'(u)(\alpha_{n-1}))\\
&\quad= \int_{\mathcal{A}(u)} (Au-f)(\varphi-\Phi'(u)(\alpha_{n-1})) +\int_{\{u< \Phi(u)\}} (Au-f)(\varphi-\Phi'(u)(\alpha_{n-1})) \\
&\quad= \int_{\mathcal{A}(u)} (Au-f)(\varphi-\Phi'(u)(\alpha_{n-1})) \\
&\quad= \int_{\mathcal{A}(u)} (Au-f)\varphi
\end{align*}
where for the final equality we have used the a.e. condition of \ref{itm:propertyOfDeriv}.
\end{proof}

\section{Uniform estimates and passage to the limit in the expansion formula}\label{sec:uniformEstimates}
We look for uniform bounds on $\{\alpha_n\}_{n \in \mathbb{N}}$ in order to deduce that it has a limit. Note the following result which is a direct consequence of the equality \eqref{eq:eqForqn} and the fact that $q_n(t)$ converges to $q(t)$.
\begin{lemma}\label{lem:convOfSum}
Let \ref{itm:PhiDiff} hold. For every $t > 0$, we have $t\alpha_n + o_n(t) \weaklyto \chi(t)$ in $V$ for some $\chi(t)$ as $n \to \infty$. The convergence is strong if \ref{itm:compContOfPhi} is true and is strong in $L^\infty(\Omega)$ if \ref{itm:PhiConcaveEtc} is true and $f,d \in L^\infty_+(\Omega)$.
\end{lemma}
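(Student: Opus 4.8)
The plan is to read the claim off directly from the expansion formula \eqref{eq:eqForqn} of Proposition \ref{prop:relationBetweenQnAndUn} together with the convergence of the VI iterates recorded in Theorem \ref{thm:convergenceOfIterations}; no new estimates are required. First I would invoke Proposition \ref{prop:relationBetweenQnAndUn}, which holds under \ref{itm:PhiDiff} alone, to rewrite the decomposition $q_n(t) = u + t\alpha_n + o_n(t)$ as
\[ t\alpha_n + o_n(t) = q_n(t) - u \qquad \text{for every } n \in \mathbb{N},\ t > 0. \]
Here $q_n(t) = S(f+td, q_{n-1}(t))$ with $q_0 = u$ is exactly the sequence appearing in Theorem \ref{thm:convergenceOfIterations}, and $u$ is a fixed element of $V$ independent of $n$, so $\{t\alpha_n + o_n(t)\}_{n}$ is nothing but $\{q_n(t)\}_{n}$ translated by the constant $-u$.

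Next I would apply Theorem \ref{thm:convergenceOfIterations} (recall that $f, d \geq 0$ in $V^*$ is standing in this part of the paper, and that we assume either \ref{itm:compContOfPhi}, or \ref{itm:PhiConcaveEtc} together with $f, d \in L^\infty_+(\Omega)$). That theorem yields $q_n(t) \weaklyto q(t)$ in $V$ for some $q(t) \in \mathbf{Q}(f+td)$, the convergence being strong in $V$ under \ref{itm:compContOfPhi} and strong in $L^\infty(\Omega)$ under \ref{itm:PhiConcaveEtc} and $f,d \in L^\infty_+(\Omega)$. Setting $\chi(t) := q(t) - u$ and using that subtracting the fixed element $u$ is a homeomorphism for each of the weak topology on $V$, the norm topology on $V$, and the norm topology on $L^\infty(\Omega)$, I conclude $t\alpha_n + o_n(t) \weaklyto \chi(t)$ in $V$, with the corresponding strengthening of the mode of convergence in the two special cases.

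There is essentially no obstacle here: the lemma is a bookkeeping corollary that secures, for use in Section \ref{sec:uniformEstimates}, the existence of a limit for the sum $t\alpha_n + o_n(t)$ before anything is known about $\alpha_n$ or $o_n(t)$ separately. The only point needing mild care is not to over-claim: the limit $\chi(t)$ is produced for each fixed $t > 0$, it depends on the particular solution $q(t) \in \mathbf{Q}(f+td)$ selected by the iteration scheme, and convergence is asserted only in $n$, not jointly as $(n,t) \to (\infty, 0^+)$ --- keeping the $n$-limit separate from the subsequent $t \to 0^+$ limit is precisely the architecture of the paper's argument, so the statement is deliberately left at this level.
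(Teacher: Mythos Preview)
Your proposal is correct and matches the paper's own reasoning: the lemma is stated there as ``a direct consequence of the equality \eqref{eq:eqForqn} and the fact that $q_n(t)$ converges to $q(t)$,'' which is exactly the translation argument you give. One small sharpening: the weak convergence $q_n(t)\weaklyto q(t)$ in $V$ already follows from the monotonicity and uniform bound established in the proof of Theorem~\ref{thm:convergenceOfIterations} \emph{before} \ref{itm:compContOfPhi} or \ref{itm:PhiConcaveEtc} are invoked, which is why the lemma's weak-convergence claim is stated under \ref{itm:PhiDiff} alone.
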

\subsection{Convergence of the directional derivatives}
\begin{theorem}\label{thm:boundednessOfDirectionalDerivatives}
Let \ref{itm:PhiDiff} hold. If either \ref{itm:complementarity} and \ref{itm:propertyOfDeriv} hold, or
\[\exists c > 0 : \quad\norm{\Phi'(u)b}{V} \leq \frac{C_a-c}{C_b}\norm{b}{V}\]
(which is implied by \ref{itm:smallnessOfDerivOfPhi}, see \eqref{eq:consequenceOfA5}), then $\{\alpha_n\}_{n \in \mathbb{N}}$ and $\{\delta_n\}_{n \in \mathbb{N}}$ are bounded in $V$.
\end{theorem}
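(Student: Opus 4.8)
The plan is to derive, from the variational inequality \eqref{eq:VIforalphan} satisfied by $\alpha_n$, a recurrence inequality for the norms $\|\alpha_n\|_V$ and then show this recurrence forces boundedness. First I would test the VI \eqref{eq:VIforalphan} (equivalently the VI for $\alpha_n$ in Proposition \ref{prop:relationBetweenQnAndUn}) with an admissible $\varphi \in \mathcal{K}^u(\alpha_{n-1})$ in order to extract an a priori estimate. The natural choice is $\varphi = \Phi'(u)(\alpha_{n-1})$ itself: by Lemma \ref{lem:zeroIsValid} this is non-negative a.e.\ (and under \ref{itm:propertyOfDeriv} it vanishes q.e.\ on $\mathcal{A}(u)$, hence trivially satisfies $\varphi \le \Phi'(u)(\alpha_{n-1})$ q.e.\ there and makes the equality-constraint pairing zero); in the second regime one checks admissibility directly from the definition of $\mathcal{K}^u(\alpha_{n-1})$. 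Plugging $\varphi = \Phi'(u)(\alpha_{n-1})$ into $\langle A\alpha_n - d, \alpha_n - \varphi\rangle \le 0$ gives
\[
\langle A\alpha_n, \alpha_n\rangle \le \langle d, \alpha_n\rangle + \langle A\alpha_n - d, \Phi'(u)(\alpha_{n-1})\rangle,
\]
and using coercivity on the left, boundedness of $A$, and Cauchy--Schwarz on the right,
\[
C_a\|\alpha_n\|_V^2 \le \|d\|_{V^*}\|\alpha_n\|_V + \big(C_b\|\alpha_n\|_V + \|d\|_{V^*}\big)\|\Phi'(u)(\alpha_{n-1})\|_V.
\]

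Next I would invoke the hypothesis on $\Phi'(u)$. In the second regime, $\|\Phi'(u)(\alpha_{n-1})\|_V \le \frac{C_a - c}{C_b}\|\alpha_{n-1}\|_V$ directly; in the first regime one uses \ref{itm:complementarity} and \ref{itm:propertyOfDeriv} together with Lemma \ref{lem:rewriteOfConstraintSet} to kill the troublesome $\Phi'(u)(\alpha_{n-1})$ contribution in the pairing $\langle A\alpha_n - d, \Phi'(u)(\alpha_{n-1})\rangle$ — essentially because $(Au-f)$ is supported on $\mathcal{A}(u)$ where $\Phi'(u)(\alpha_{n-1})$ vanishes, and one can re-run the estimate testing instead against $\varphi$ chosen so that the $\Phi'$ term disappears, leading to $C_a\|\alpha_n\|_V \le \|d\|_{V^*}$ plus a contribution controlled by $\|\alpha_{n-1}\|_V$ with a contraction factor. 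Dividing the displayed quadratic inequality through by $\|\alpha_n\|_V$ yields a scalar recurrence of the form
\[
\|\alpha_n\|_V \le \kappa \|\alpha_{n-1}\|_V + M,
\qquad \kappa := \frac{C_a - c}{C_a}\cdot\frac{C_b}{C_b} + (\text{lower order}) < 1, \quad M := C_a^{-1}\|d\|_{V^*}(1 + \text{const}),
\]
(the precise $\kappa$ needs care: the cross term $\|d\|_{V^*}\|\Phi'(u)(\alpha_{n-1})\|_V/\|\alpha_n\|_V$ should be absorbed, e.g.\ by Young's inequality, into a slightly worse but still $<1$ factor, or handled by noting it is lower order relative to the quadratic terms). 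Since $\alpha_1 = \delta_1 = \partial S(f,u)(d)$ is a fixed element of $V$ with a bound $\|\alpha_1\|_V \le C_a^{-1}\|d\|_{V^*}$ coming from the VI \eqref{eq:VIForS}, iterating the recurrence gives $\|\alpha_n\|_V \le \kappa^{n-1}\|\alpha_1\|_V + M(1-\kappa)^{-1}$, which is bounded uniformly in $n$. Boundedness of $\{\delta_n\}$ then follows from $\delta_n = \alpha_n - \Phi'(u)(\alpha_{n-1})$ and the bound on $\Phi'(u)$, or from the corollary $\alpha_n \ge \delta_n$ combined with $\delta_n \le 0$ q.e.\ on $\mathcal{A}(u)$ and a direct estimate of the VI \eqref{eq:VIfordelta}.

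The main obstacle I expect is getting the contraction constant $\kappa$ genuinely below $1$ while correctly disposing of the mixed term involving $\|d\|_{V^*}\|\Phi'(u)(\alpha_{n-1})\|_V$: a naive estimate produces a factor like $1 + C_a^{-1}C_b$ multiplying $C_\Phi$, which is exactly why the smallness threshold $C_\Phi < (1 + C_a^{-1}C_b)^{-1}$ appears in \ref{itm:smallnessOfDerivOfPhi}. So the careful bookkeeping is to show that the effective multiplier on $\|\alpha_{n-1}\|_V$ in the recurrence is $(1 + C_a^{-1}C_b)\,C_\Phi < 1$ (equivalently $C_\Phi < (C_a-c)/C_b$ rearranges to this), after dividing the quadratic inequality by $\|\alpha_n\|_V$ and moving the $C_b\|\Phi'(u)(\alpha_{n-1})\|_V/C_a$ and $C_a^{-1}\|d\|_{V^*}\|\Phi'(u)(\alpha_{n-1})\|_V/\|\alpha_n\|_V$ terms to the right. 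In the complementarity regime the situation is cleaner because the $\Phi'(u)(\alpha_{n-1})$ term is simply annihilated by the structure of $\mathcal{K}^u(\alpha_{n-1})$, so there $\kappa = 0$ effectively and one gets the uniform bound $\|\alpha_n\|_V \le C_a^{-1}\|d\|_{V^*}$ outright; the work there is purely in justifying the use of Lemma \ref{lem:rewriteOfConstraintSet} and the admissibility of the test function.
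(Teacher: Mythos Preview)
Your overall strategy---test the VI for $\alpha_n$ with a suitable $\varphi$, extract a recurrence for $\|\alpha_n\|_V$, and iterate---is exactly the paper's approach. The test function $\varphi=\Phi'(u)(\alpha_{n-1})$ is equivalent to the paper's choice $v=0$ in \eqref{eq:VIforalphan}, so the starting inequality
\[
C_a\|\alpha_n\|_V^2 \le \|d\|_{V^*}\|\alpha_n\|_V + (C_b\|\alpha_n\|_V + \|d\|_{V^*})\|\Phi'(u)(\alpha_{n-1})\|_V
\]
is the same one the paper obtains. However, there are two genuine problems in what follows.

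\textbf{Case 1 (under \ref{itm:complementarity} and \ref{itm:propertyOfDeriv}).} The paper does not run a recurrence at all here. Once Lemma~\ref{lem:rewriteOfConstraintSet} simplifies the constraint set and Lemma~\ref{lem:zeroIsValid} gives $\Phi'(u)(\alpha_{n-1})\ge 0$, the function $\varphi=0$ itself is admissible in $\mathcal{K}^u(\alpha_{n-1})$ (the equality constraint $\int_{\mathcal{A}(u)}(Au-f)\cdot 0=0$ is trivially satisfied). Testing with $\varphi=0$ gives $\langle A\alpha_n-d,\alpha_n\rangle\le 0$ and hence the uniform bound $\|\alpha_n\|_V\le C_a^{-1}\|d\|_{V^*}$ outright; there is no ``contribution controlled by $\|\alpha_{n-1}\|_V$ with a contraction factor'' as you suggest. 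Your test function $\varphi=\Phi'(u)(\alpha_{n-1})$ does \emph{not} give this clean bound, because $\langle A\alpha_n-d,\Phi'(u)(\alpha_{n-1})\rangle$ has no reason to vanish: conditions \ref{itm:complementarity}--\ref{itm:propertyOfDeriv} concern $Au-f$ on $\mathcal{A}(u)$, not $A\alpha_n-d$.

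\textbf{Case 2 (smallness).} Here your contraction constant is wrong. You claim the effective multiplier on $\|\alpha_{n-1}\|_V$ is $(1+C_a^{-1}C_b)C_\Phi$, and that ``$C_\Phi<(C_a-c)/C_b$ rearranges to this'' being $<1$. It does not: with $N:=(C_a-c)/C_b$, the inequality $(1+C_a^{-1}C_b)N<1$ is equivalent to $c>C_a^2/(C_a+C_b)$, which is \emph{not} guaranteed by the theorem's hypothesis (any $c>0$). Dividing through by $\|\alpha_n\|_V$ leaves the term $\|d\|_{V^*}N\|\alpha_{n-1}\|_V/\|\alpha_n\|_V$, which you cannot control without knowing $\|\alpha_n\|_V\ge\|\alpha_{n-1}\|_V$ (unknown in the $V$-norm). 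The paper instead stays with the squared inequality and applies Young symmetrically, $a_na_{n-1}\le \tfrac12(a_n^2+a_{n-1}^2)$, on the cross term, obtaining a recurrence $a_n^2\le (C_2/C_1)a_{n-1}^2 + C_3/C_1$ whose contraction condition reduces to $NC_b<C_a$, i.e.\ $C_a-c<C_a$, which holds for \emph{any} $c>0$. That is the step you are missing.

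Boundedness of $\{\delta_n\}$ is then obtained, as you say, from the bound on $\{\alpha_n\}$---the paper tests \eqref{eq:VIfordelta} with $v=0$.
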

\begin{proof}
In the first case, the constraint set in the VI \eqref{eq:VIforalphan} for $\alpha_n$ simplifies by Lemma \ref{lem:rewriteOfConstraintSet} and by \ref{itm:propertyOfDeriv}, the  function $\varphi = 0$ is an admissible test function, and this easily leads to a bound on the directional derivatives $\alpha_n$.

Let us discuss the second case now. Abbreviate $N:= (C_a-c)\slash C_b$, pick $v= 0$ in the VI \eqref{eq:VIforalphan} for $\alpha_n$ and use the hypothesis of the lemma to obtain
\begin{align*}
C_a\norm{\alpha_n}{V}^2 &\leq C_b\norm{\alpha_n}{V}\norm{\Phi'(u)[\alpha_{n-1}]}{V} + \norm{d}{V^*}(\norm{\Phi'(u)[\alpha_{n-1}]}{V} + \norm{\alpha_n}{V})\\
&\leq NC_b\norm{\alpha_n}{V}\norm{\alpha_{n-1}}{V} + \norm{d}{V^*}\left(N\norm{\alpha_{n-1}}{V} + \norm{\alpha_n}{V}\right).
\end{align*}
With 
\[K_1 := NC_b, \qquad K_2:=N\norm{d}{V^*},\qquad K_3 := \norm{d}{V^*},\qquad\text{and}\qquad a_n := \norm{\alpha_n}{V},\] the above inequality can be rewritten and manipulated with Young's inequality with $\epsilon$ and $\rho$ as follows:
\begin{align*}
C_aa_n^2 &\leq K_1a_na_{n-1} + K_2a_{n-1} + K_3a_n\\
&\leq K_1\left(\frac{a_n^2}{2} + \frac{a_{n-1}^2}{2}\right) + C_\epsilon K_2^2 + \epsilon a_{n-1}^2 + C_\rho K_3^2 + \rho a_n^2\\
&= \left(\frac{K_1}{2} + \rho\right)a_n^2 + \left(\frac{K_1}{2}+ \epsilon\right) a_{n-1}^2 + C_\epsilon K_2^2  + C_\rho K_3^2,
\end{align*}
implying
\[\underbrace{\left(C_a-\frac{K_1}{2} - \rho\right)}_{=:  C_1}a_n^2 \leq \underbrace{\left(\frac{K_1}{2}+ \epsilon\right)}_{=:  C_2} a_{n-1}^2 + \underbrace{C_\epsilon K_2^2  + C_\rho K_3^2}_{=: C_3}\]
which we write as
\[a_n^2 \leq \frac{ C_2}{ C_1}a_{n-1}^2 + \frac{ C_3}{ C_1}.\]
This implies
\[a_n^2 \leq \left(\frac{ C_2}{ C_1}\right)^{n-1}a_1^2 + \frac{ C_3}{ C_1}\left(\frac{1-(\frac{ C_2}{ C_1})^{n-1}}{1-\frac{ C_2}{ C_1}}\right),\]
so we need $ C_2\slash  C_1 < 1$ for the second term to be bounded uniformly in $n$. That is, we need
\[K_1 < {C_a -\rho - \epsilon},\]
and the right hand side is largest when $\rho$ and $\epsilon$ are small. Thus we need the condition $K_1 < C_a$ or equivalently $N < C_a\slash C_b$ which holds since $c>0$. We then have to choose $\rho$ and $C_\epsilon$ so that the displayed inequality above the previous one is valid. Under this condition we obtain
\[a_n^2 \leq a_1^2 + \frac{C_\epsilon K_2^2 + C_\rho K_3^2}{C_a - K_1 -\rho - \epsilon}.\]
Thus the claim follows for $\alpha_n$. Once the bound on $\alpha_n$ is in hand the bound on the $\delta_n$ follows by testing the VI \eqref{eq:VIfordelta} with zero.
\end{proof}
Hence, we can find a subsequence of the $\alpha_n$ and $\delta_n$ such that $\alpha_{n_j} \weaklyto \alpha$ and $\delta_{n_j} \weaklyto \delta$ for some $\alpha$ and $\delta.$ In fact the convergences hold for the full sequences as the next lemma demonstrates.
\begin{lemma}Under the conditions of Theorem \ref{thm:boundednessOfDirectionalDerivatives}, the full sequence $\alpha_n$ converges weakly 
\[\alpha_n \weaklyto \alpha \quad \text{in $V$},\]
and if \ref{itm:compContOfDerivOfPhi} holds, then $\delta_n$ also converges weakly:
\[ \delta_n \weaklyto \delta \quad \text{in $V$}.\]
\end{lemma}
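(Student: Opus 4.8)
The plan is to upgrade the subsequential weak compactness already granted by Theorem~\ref{thm:boundednessOfDirectionalDerivatives} to genuine full-sequence convergence by exploiting the a.e.\ monotonicity of $\{\alpha_n\}$. First I would invoke Lemma~\ref{lem:monotonicityOfDirDerAndSum} (which also uses \ref{item:assPhiDerivativeBounded}): the sequence $\{\alpha_n\}$ is non-negative and increasing a.e.\ on $X$, so for a.e.\ $x\in X$ the real sequence $\alpha_n(x)$ increases to a limit $\alpha(x)\in[0,+\infty]$. Combining this pointwise limit with the uniform bound $\sup_n\norm{\alpha_n}{V}<\infty$ and the continuous embedding $V\subset L^2(X;\xi)$, the monotone convergence theorem gives $\int_X\alpha^2\,\mathrm{d}\xi=\lim_n\int_X\alpha_n^2\,\mathrm{d}\xi\le C\sup_n\norm{\alpha_n}{V}^2<\infty$, so $\alpha\in L^2(X;\xi)$ (in particular $\alpha<\infty$ a.e.) and, by dominated convergence with dominating function $\alpha$, $\alpha_n\to\alpha$ strongly in $L^2(X;\xi)$.

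Next I would pass to weak convergence in $V$. Any subsequence of the $V$-bounded sequence $\{\alpha_n\}$ has a further subsequence converging weakly in $V$ to some element; since the embedding $V\hookrightarrow L^2(X;\xi)$ is bounded linear, hence weak--weak continuous, that weak $V$-limit must coincide with the $L^2$-limit $\alpha$, and in particular $\alpha\in V$. As every subsequence of $\{\alpha_n\}$ thus has a sub-subsequence converging weakly in $V$ to the same $\alpha$, the full sequence satisfies $\alpha_n\weaklyto\alpha$ in $V$.

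For $\{\delta_n\}$ I would use the decomposition $\delta_n=\alpha_n-\Phi'(u)(\alpha_{n-1})$ valid for $n\ge2$ by \eqref{eq:idForAlphaN} (and $\delta_1=\alpha_1$). Since $\alpha_{n-1}\weaklyto\alpha$ in $V$ and, under \ref{itm:compContOfDerivOfPhi}, $\Phi'(u)\colon V\to V$ is completely continuous, we obtain $\Phi'(u)(\alpha_{n-1})\to\Phi'(u)(\alpha)$ strongly in $V$; adding this to the weak convergence $\alpha_n\weaklyto\alpha$ yields $\delta_n\weaklyto\alpha-\Phi'(u)(\alpha)=:\delta$ in $V$. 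The main obstacle is precisely this passage from weak compactness to full-sequence convergence: the $V$-bound alone only produces weak cluster points, and it is the a.e.\ monotonicity -- together with the monotone/dominated convergence in $L^2$ that the $V$-bound forces -- that collapses all cluster points to the single limit $\alpha$; for $\delta_n$ the essential point is that complete continuity of $\Phi'(u)$ converts the merely weak convergence of $\alpha_{n-1}$ into norm convergence of $\Phi'(u)(\alpha_{n-1})$, which is what makes the weak limit of $\delta_n$ identifiable.
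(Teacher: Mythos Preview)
Your argument is correct and follows essentially the same approach as the paper: use the a.e.\ monotonicity of $\{\alpha_n\}$ (from Lemma~\ref{lem:monotonicityOfDirDerAndSum}) together with the $V$-bound to force all weak cluster points to coincide with the unique pointwise limit, and then pass to the limit in the recursion $\alpha_n=\Phi'(u)(\alpha_{n-1})+\delta_n$ using complete continuity of $\Phi'(u)$. The paper's proof is terser and does not spell out the intermediate $L^2$ convergence, but the mechanism is identical; your explicit invocation of \ref{item:assPhiDerivativeBounded} (needed for Lemma~\ref{lem:monotonicityOfDirDerAndSum}) is a welcome clarification that the paper leaves implicit.
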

\begin{proof}
Since the $\alpha_n$ are monotone increasing, they have a pointwise a.e. monotone limit which must agree with $\alpha$ so indeed $\alpha_n \weaklyto \alpha$.  We can pass to the limit in \eqref{eq:idForAlphaN}, which is $\alpha_{n+1}=\Phi'(u)(\alpha_{n}) - \delta_{n+1}$, (\ref{itm:compContOfDerivOfPhi} is sufficient but not necessary; it would also suffice if $\Phi'(u)(\cdot)$ is weak-weak continuous) to find that the weak convergence of the full sequence $\delta_n$.
\end{proof}
As a precursor to characterising the directional derivative $\alpha$, we study the limit $\delta$ in the next lemma.
\begin{lemma}\label{lem:strongConvergenceOfDirDerv}Under \ref{itm:compContOfDerivOfPhi} and the conditions of Theorem \ref{thm:boundednessOfDirectionalDerivatives}, $\delta$ satisfies
\[\delta \in \mathcal{K}^u : \quad\langle A\delta -d +  A\Phi'(u)(\alpha), \delta - \varphi \rangle \leq 0 \qquad \forall v \in \mathcal{K}^u\]
and $\delta_n \to \delta$ and $\alpha_n \to \alpha$ strongly in $V$. If strict complementarity holds for $u-\Phi(u)$ (i.e., \eqref{eq:KScriticalcone} is true) then $\delta$ satisfies
\[\delta \in  {\mathcal S}^u : \quad\langle A\delta - d +  A\Phi'(u)(\alpha), \delta - \varphi \rangle = 0 \qquad \forall v \in  {\mathcal S}^u.\]
\end{lemma}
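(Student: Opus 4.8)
The plan is to pass to the limit $n \to \infty$ in the variational inequality \eqref{eq:VIfordelta} satisfied by $\delta_n$, which in the notation introduced reads
\[
\delta_n \in \mathcal{K}^u : \quad \langle A\delta_n - d + A\Phi'(u)(\alpha_{n-1}), \delta_n - \varphi \rangle \leq 0 \qquad \forall \varphi \in \mathcal{K}^u .
\]
Note first that the \emph{constraint set} $\mathcal{K}^u = \{\varphi \in V : \varphi \leq 0 \text{ q.e.\ on } \mathcal{A}(u), \ \langle Au - f, \varphi\rangle = 0\}$ is \emph{fixed} (it does not depend on $n$), which is the crucial simplification: this is a genuine VI on a fixed closed convex cone, not a moving one. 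So I would not need Mosco convergence of constraint sets; the only real work is handling the linear terms under weak convergence.

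The key steps, in order. (i) From the previous lemmas we already have $\alpha_n \weaklyto \alpha$ and $\delta_n \weaklyto \delta$ in $V$, and, using \ref{itm:compContOfDerivOfPhi}, $\Phi'(u)(\alpha_{n-1}) \to \Phi'(u)(\alpha)$ \emph{strongly} in $V$, hence $A\Phi'(u)(\alpha_{n-1}) \to A\Phi'(u)(\alpha)$ strongly in $V^*$ (continuity of $A$). (ii) The limit $\delta$ lies in $\mathcal{K}^u$: the inequality $\delta \leq 0$ q.e.\ on $\mathcal{A}(u)$ passes to the (weak) limit because $\mathcal{K}^u$ is convex and closed in $V$, hence weakly closed; the equality $\langle Au - f, \delta\rangle = 0$ likewise. (iii) For the inequality itself, I would establish \emph{strong} convergence $\delta_n \to \delta$ in $V$ first. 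Testing \eqref{eq:VIfordelta} for index $n$ with $\varphi = \delta$ (which belongs to $\mathcal{K}^u$) and with $\varphi = \delta_m$ for two indices $n,m$ and subtracting in the usual monotonicity argument, one gets
\[
C_a \norm{\delta_n - \delta_m}{V}^2 \leq \langle A\Phi'(u)(\alpha_{m-1}) - A\Phi'(u)(\alpha_{n-1}), \delta_n - \delta_m\rangle,
\]
and since $A\Phi'(u)(\alpha_{n-1})$ is Cauchy in $V^*$ (by step (i)) while $\delta_n$ is bounded in $V$, the right-hand side tends to $0$; hence $\{\delta_n\}$ is Cauchy in $V$, so $\delta_n \to \delta$ strongly. (An entirely analogous estimate, or the identity $\alpha_n = \Phi'(u)(\alpha_{n-1}) + \delta_n$ together with the strong convergence of both summands, gives $\alpha_n \to \alpha$ strongly.) (iv) With strong convergence of $\delta_n$ in hand and strong convergence of the perturbed source $d - A\Phi'(u)(\alpha_{n-1})$ in $V^*$, passing to the limit in the VI for $\delta_n$ tested against an arbitrary fixed $\varphi \in \mathcal{K}^u$ is immediate, yielding
\[
\langle A\delta - d + A\Phi'(u)(\alpha), \delta - \varphi\rangle \leq 0 \qquad \forall \varphi \in \mathcal{K}^u .
\]

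For the strict complementarity assertion: if $u - \Phi(u) = -S_0(A\Phi(u) - f)$ satisfies strict complementarity, then by \eqref{eq:KScriticalcone} the cone $\mathcal{K}^u$ is replaced by the \emph{linear subspace} $\mathcal{S}^u = \{\varphi \in V : \varphi = 0 \text{ q.e.\ on } \mathcal{A}(u)\}$; then \eqref{eq:VEfordelta} holds for each $n$ with equality, and the same limiting argument — now with $\pm\varphi$ both admissible for any $\varphi \in \mathcal{S}^u$ — upgrades the limiting relation to an equality $\langle A\delta - d + A\Phi'(u)(\alpha), \delta - \varphi\rangle = 0$ for all $\varphi \in \mathcal{S}^u$.

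The main obstacle is step (iii): extracting \emph{strong} rather than merely weak convergence of $\delta_n$, since the limiting inequality genuinely needs it (the term $\langle A\delta_n, \delta_n\rangle$ is not weakly continuous). This is exactly where the structure pays off — the constraint set is fixed, so $\delta$ and every $\delta_m$ are mutually admissible test functions, and coercivity of $A$ plus the strong $V^*$-convergence of $A\Phi'(u)(\alpha_{n-1})$ (which is where \ref{itm:compContOfDerivOfPhi} is essential) close the Cauchy estimate. Without complete continuity of $\Phi'(u)$ one would only have weak convergence of the perturbation term and the argument would break down; I would flag this dependence explicitly.
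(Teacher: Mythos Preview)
Your proposal is correct and follows essentially the same approach as the paper. The only minor difference is order: the paper first passes to the limit in the VI (using weak lower semicontinuity of $v\mapsto\langle Av,v\rangle$ and the strong $V^*$-convergence of $A\Phi'(u)(\alpha_{n-1})$), checks $\delta\in\mathcal{K}^u$ via Mazur's lemma and q.e.\ pointwise convergence, and \emph{then} obtains strong convergence by testing the VI for $\delta_n$ with $\varphi=\delta$ and the limit VI with $\varphi=\delta_n$; you instead run a Cauchy argument between $\delta_n$ and $\delta_m$ first and pass to the limit afterwards. Both orderings work and the underlying mechanism (fixed constraint cone, coercivity of $A$, complete continuity of $\Phi'(u)$) is identical.
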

\begin{proof}
Recall from \eqref{eq:VIfordelta} that $\delta_n$ satisfies the VI
\begin{align*}
\langle A\delta_n -d + A\Phi'(u)(\alpha_{n-1}) , \delta_n - v \rangle &\leq 0 \qquad \forall v \in \mathcal{K}^u
\end{align*}
and so if \ref{itm:compContOfDerivOfPhi} holds, we can pass to the limit here and we see that the limiting object $\delta$ satisfies the VI given in the lemma. We must check that $\delta \in \mathcal{K}^u$ too; this is worth emphasis because of the non-trival quasi-everywhere constraint in $\mathcal{K}^u$. By Mazur's lemma, a convex combination $v_k$ of the $\{\delta_n\}_n$ converges strongly to $\delta$. By definition, $\delta_n \geq 0$ everywhere on $\mathcal{A}(u)\slash \mathcal{A}_n(u)$ where $\mathcal{A}_n(u) \subset \mathcal{A}(u)$ is a set of capacity zero. Since $v_k \to \delta$ strongly in $V$, it holds converges pointwise q.e., and using the fact that a countable union of capacity zero sets has capacity zero, we can pass to the limit to deduce that $\delta \geq 0$ quasi-everywhere on $\mathcal{A}(u)$. Secondly, it is easy to check the equality constraint is satisfied by $\delta$.

Thus taking $v=\delta$ in the VI for $\delta_n$ and $\varphi = \delta_n$ in the VI for $\delta$ and combining, we find
\[C_a\norm{\delta_n - \delta}{V} \leq C_b\norm{\Phi'(u)(\alpha_{n-1}) - \Phi'(u)(\alpha)}{V}\]
and the complete continuity and the weak convergence of $\alpha_n$ to $\alpha$ in $V$ proves the first stated convergence result. For the second, use the identity \eqref{eq:idForAlphaN}: $\alpha_n = \Phi'(u)(\alpha_{n-1}) + \delta_{n}$ and pass to the limit here. If strict complementarity holds for $u-\Phi(u)$ (see \eqref{eq:KScriticalcone}), the variational equality \eqref{eq:VEfordelta} is valid and we can pass to the limit in it since the strong convergence for $\delta_n$ is already obtained. 
\end{proof}
\subsection{Characterisation of the directional derivative of the QVI}
Let us now prove the properties of the directional derivative $\alpha$ stated in Theorems \ref{thm:main} and \ref{thm:regularity}. From \eqref{eq:idForAlphaN}, we obtain
\[\alpha = \Phi'(u)(\alpha)+ \delta.\]
Using this fact in the QVI for $\delta$ given in Lemma \ref{lem:strongConvergenceOfDirDerv} yields
\[\langle A\alpha - d, \alpha - \Phi'(u)(\alpha)-v \rangle \leq0 \quad \forall v \in \mathcal{K}^u
\]
 which translates into, with $\eta := \Phi(u)(\alpha) + v$,
\begin{align*}
\alpha \in \mathcal{K}^u(\alpha) &: \langle A\alpha - d, \alpha - \eta \rangle \leq 0  \quad \forall \eta \in \mathcal{K}^u(\alpha)\\
\mathcal{K}^u(\alpha) &:= \{ \varphi \in V : \varphi \leq \Phi'(u)(\alpha) \text{ q.e. on } \mathcal{A}(u) \text{ and } \langle Au-f, \varphi-\Phi'(u)(\alpha) \rangle = 0\}.
\end{align*}
Similar manipulations lead to the quasi-variational equality for $\alpha$ in the case of strict complementarity. For any valid directions $d_1, d_2$, by Proposition \ref{prop:relationBetweenQnAndUn} we have
\[\alpha_n(c_1d_1) = c_1\alpha_n(d_1)\qquad\forall c_1 \geq 0\]
and if strict complementarity and \ref{itm:PhiDiffLinearInDirection} hold,
\[\alpha_n(c_1d_1 + c_2d_2) = c_1\alpha_n(d_1) + c_2\alpha_n(d_2)\qquad \forall c_1, c_2 \ge 0.\]
Passing to the limit here and above shows that the derivative $\alpha$ is positively homogeneous, and also linear for non-negative directions $d_1, d_2$ and positive constants $c_1, c_2$ under the corresponding assumptions.

\subsection{Convergence of the higher order term}
In this subsection, we need the conditions of Theorems \ref{thm:convergenceOfIterations} and \ref{thm:boundednessOfDirectionalDerivatives}. Since $q_n(t)-u = t\alpha_n + o_n(t) \weaklyto  q(t)-u$ in $V$ and $\alpha_n \weaklyto \alpha$ in $V$ , we immediately obtain the existence of a function $o^*(t) \in V$ such that
\begin{align*}
o_n(t) &\weaklyto  o^*(t)\quad\text{in $V$}
\end{align*}
and it remains for us to show that $o^*$ is a higher order term, i.e., that $t^{-1}o^*(t) \to 0$ as $t \to 0^+$. To do this, in this section we will obtain some convergence (in $t$) results for $o_n(t)$ uniformly in $n$. First, let us give some immediate properties of the limiting object $o^*$.
\begin{lemma}
If \ref{item:assPhiDerivativeBounded} holds, then
\[\liminf_{t \to 0^+}\frac{o^*(t)}{t} \geq 0 \quad\text{a.e. in $X$}.\]
\end{lemma}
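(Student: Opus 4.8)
The plan is to establish the pointwise lower bound on $o^*(t)/t$ by transferring the known a.e.\ nonnegativity of $o_n(t)/t$ on a vanishingly small exceptional behaviour, using the weak convergence $o_n(t) \weaklyto o^*(t)$ in $V$ together with a Mazur-type argument. First I would recall that by Lemma~\ref{lem:monotonicityOfDirDerAndSum} the sequence $\{t\alpha_n + o_n(t)\}_n$ is monotonically increasing and $\geq 0$ a.e., and that $q_n(t) - u = t\alpha_n + o_n(t)$; moreover by Proposition~\ref{prop:relationBetweenQnAndUn} (under \ref{item:assPhiDerivativeBounded}) we have $t^{-1}o_n(t) \to 0$ in $V$ as $t \to 0^+$ for each fixed $n$. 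The idea is that $o_n(t) = (t\alpha_n + o_n(t)) - t\alpha_n$, where the first term is $\geq 0$ a.e.\ (in fact it equals $q_n(t) - u \geq 0$), so that $o_n(t) \geq -t\alpha_n$ a.e., and hence $o_n(t)/t \geq -\alpha_n$ a.e.\ in $X$.

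Next I would pass to the limit in $n$. Since $o_n(t) \weaklyto o^*(t)$ and $\alpha_n \weaklyto \alpha$ in $V$, I would apply Mazur's lemma to extract convex combinations of the tails that converge strongly in $V$, hence (along a further subsequence) pointwise a.e.\ in $X$. Because the inequality $o_n(t)/t + \alpha_n \geq 0$ a.e.\ is preserved under convex combinations, passing to the strong/pointwise limit gives $o^*(t)/t + \alpha \geq 0$ a.e.\ in $X$, i.e.\ $o^*(t)/t \geq -\alpha$ a.e. This is a fixed-$t$ inequality. Then I would take $\liminf_{t \to 0^+}$ on both sides; since $\alpha = \alpha(d)$ does not depend on $t$, this alone only yields $\liminf_{t\to 0^+} o^*(t)/t \geq -\alpha$, which is not yet the claim.

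To close the gap to $\geq 0$, I would instead argue directly at the level of the limit object before dividing. Observe $o^*(t) = \chi(t) - t\alpha$ where $\chi(t) = \lim_n (t\alpha_n + o_n(t)) = q(t) - u \geq 0$ a.e.\ (using Lemma~\ref{lem:convOfSum} and that $q(t) \geq u$). So $o^*(t)/t = (q(t)-u)/t - \alpha \geq -\alpha$ a.e.; to get the sharper bound I would use instead that, for \emph{each fixed} $n$, $t^{-1}o_n(t) \to 0$ in $V$ as $t\to 0^+$, combined with the monotonicity $o^*(t) \geq o_n(t)$ a.e.\ (which follows from $t\alpha_m + o_m(t)$ increasing in $m$ and $\alpha_m \weaklyto \alpha$: more precisely $o^*(t) = \chi(t) - t\alpha \geq (t\alpha_n + o_n(t)) - t\alpha = o_n(t) + t(\alpha_n - \alpha)$, and since $\alpha_n \geq \alpha$ fails --- $\alpha_n$ is increasing to $\alpha$ so $\alpha_n \leq \alpha$ --- we instead use $o^*(t) \geq o_n(t) - t(\alpha - \alpha_n)$). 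Hence for each $n$,
\begin{equation*}
\liminf_{t\to 0^+}\frac{o^*(t)}{t} \geq \liminf_{t\to 0^+}\frac{o_n(t)}{t} - \limsup_{t\to 0^+}(\alpha - \alpha_n) = -(\alpha - \alpha_n)\quad\text{a.e. in }X,
\end{equation*}
using $t^{-1}o_n(t)\to 0$ in $V$ (hence a subsequence converges a.e., and one argues along it, or one upgrades to a.e.\ convergence). Letting $n \to \infty$ and using $\alpha_n \to \alpha$ a.e.\ (monotone convergence of the $\alpha_n$, established in Section~\ref{sec:uniformEstimates}) gives $\liminf_{t\to 0^+} o^*(t)/t \geq 0$ a.e.\ in $X$.

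The main obstacle I anticipate is the interchange of the two limits ($t \to 0^+$ and $n \to \infty$) and the careful handling of the quasi-everywhere versus almost-everywhere subtleties and of subsequences when invoking a.e.\ convergence from norm convergence; this is precisely the kind of commutation-of-limits difficulty flagged in the introduction. The device that makes it work is that the $n$-limit is monotone (so $o^*(t) \geq o_n(t) - t(\alpha-\alpha_n)$ holds for \emph{every} $n$ and $t$, not just in the limit), which lets one fix $n$, take the easy $t$-limit where $t^{-1}o_n(t)\to 0$, and only afterwards send $n\to\infty$ with $\alpha_n \to \alpha$. I would also need to be slightly careful that "$\liminf_{t\to0^+}$" of a $V$-valued (hence a.e.-defined) family is interpreted pointwise a.e., which is consistent with the statement.
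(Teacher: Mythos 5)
Your proposal is correct and, after the initial detours, lands on exactly the paper's argument: the monotone inequality $t\alpha + o^*(t) \geq t\alpha_n + o_n(t)$ a.e.\ (obtained from Lemma \ref{lem:monotonicityOfDirDerAndSum} and Lemma \ref{lem:convOfSum}), rearranged to $o^*(t)/t \geq o_n(t)/t + \alpha_n - \alpha$, followed by taking $\liminf_{t\to 0^+}$ first (using $t^{-1}o_n(t)\to 0$ from Proposition \ref{prop:relationBetweenQnAndUn}) and then sending $n\to\infty$ with $\alpha_n \to \alpha$ a.e. The subtleties you flag (a.e.\ interpretation of the $t$-liminf, subsequences for pointwise convergence) are treated at the same level of detail in the paper itself.
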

\begin{proof}
If $N > n$, we have from Lemma \ref{lem:monotonicityOfDirDerAndSum} that $t\alpha_N + o_N(t) \geq t\alpha_n + o_n(t)$, and taking $N \to \infty$, we obtain from the strong convergence of $\alpha_N + o_N$ in $H$ or $L^\infty_+(\Omega)$ (see Lemma \ref{lem:convOfSum}) that
$t\alpha + o^*(t) \geq t\alpha_n + o_n(t)$, which implies
\[\frac{o^*(t)}{t} \geq \alpha_n - \alpha + \frac{o_n(t)}{t},\]
and taking the limit inferior as $t \to 0^+$ first, using from Proposition \ref{prop:relationBetweenQnAndUn} the fact that $t^{-1}o_n(t) \to 0$ as $t \to 0^+$, and then sending $n \to \infty$ we find the desired result.
\end{proof}
From Lemma \ref{lem:behaviourOfLOTonI} and the pointwise a.e. convergence of $q_n$ and $\alpha_n$ (recall that $\{\alpha_n\}_{n \in \mathbb{N}}$ is a monotonic sequence) we get the following result.
\begin{corollary}
If $\Phi$ is a superposition operator,
\[\frac{o^*(t)}{t} =0 \quad\text{a.e. on $\mathcal{A}(u)$}.\]
\end{corollary}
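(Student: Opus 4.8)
The plan is to pass to the pointwise a.e.\ limit in $n$ in the identity $o_n(t) = q_n(t) - u - t\alpha_n$ that defines $o_n(t)$, and to exploit the fact that, for $\Phi$ of superposition type, each individual $o_n(t)$ already vanishes a.e.\ on $\mathcal{A}(u)$ by Lemma \ref{lem:behaviourOfLOTonI}.

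First I would identify the weak limit $o^*(t)$ concretely: since $o_n(t) = q_n(t) - u - t\alpha_n$ with $q_n(t) \weaklyto q(t)$ and $\alpha_n \weaklyto \alpha$ in $V$, we get $o_n(t) \weaklyto q(t) - u - t\alpha$ in $V$, so by uniqueness of weak limits $o^*(t) = q(t) - u - t\alpha$. Then I would recall that by \eqref{eq:convergencesForQn} we have $q_n(t) \to q(t)$ pointwise a.e.\ in $X$, and that $\{\alpha_n\}_n$ is monotone by Lemma \ref{lem:monotonicityOfDirDerAndSum} and hence converges pointwise a.e.\ in $X$, necessarily to its weak limit $\alpha$. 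Combining these two pointwise convergences gives $o_n(t) = q_n(t) - u - t\alpha_n \to q(t) - u - t\alpha = o^*(t)$ pointwise a.e.\ in $X$.

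Finally, I would use Lemma \ref{lem:behaviourOfLOTonI} in the superposition case, which yields for each $n$ a $\xi$-null set $N_n \subset \mathcal{A}(u)$ such that $o_n(t) = 0$ on $\mathcal{A}(u) \setminus N_n$; since $N := \bigcup_{n \in \mathbb{N}} N_n$ is again $\xi$-null, on $\mathcal{A}(u) \setminus N$ we have $o_n(t)(x) = 0$ for every $n$, and passing to the pointwise limit established above gives $o^*(t)(x) = 0$ there. Hence $o^*(t) = 0$ a.e.\ on $\mathcal{A}(u)$, and dividing by $t > 0$ gives $t^{-1}o^*(t) = 0$ a.e.\ on $\mathcal{A}(u)$. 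The argument is essentially routine; the only steps requiring a little care are matching the weak $V$-limit $o^*(t)$ with the a.e.\ pointwise limit of $o_n(t)$ (done via uniqueness of limits together with the monotonicity-driven pointwise convergence of $\alpha_n$), and the observation that the countable union of the exceptional null sets coming from Lemma \ref{lem:behaviourOfLOTonI} is still negligible.
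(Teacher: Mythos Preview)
Your proof is correct and follows essentially the same approach as the paper: invoke Lemma~\ref{lem:behaviourOfLOTonI} for each $n$, use the pointwise a.e.\ convergence of $q_n$ and the monotone sequence $\alpha_n$ to obtain pointwise a.e.\ convergence of $o_n(t)$ to $o^*(t)$, and pass to the limit. The paper states this in one line; you have simply spelled out the details (identifying the weak limit, handling the countable union of null sets) more carefully.
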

Now we focus on the final ingredient necessary to prove the main theorem.
\paragraph{Notation}{Since the base point $u$ is fixed, below we have omitted it from the higher order terms (eg. instead of $\hat l(t,a, b, u)$ we just write $\hat l(t,a, b)$ and so on).}

\begin{lemma}\label{lem:convergenceForOn}
In addition to \ref{itm:PhiDiff}, let also the conditions in Theorems \ref{thm:convergenceOfIterations} and \ref{thm:boundednessOfDirectionalDerivatives} as well as \ref{itm:compContOfDerivOfPhi}, \ref{item:assPhiDerivativeBounded} and \ref{itm:smallnessOfDerivOfPhi} hold. 
Then the convergence $t^{-1}o_n(t) \to 0$ in $V$ as $t \to 0^+$ is uniform in $n$.
\end{lemma}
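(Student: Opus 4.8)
The plan is to convert the recursion $o_n(t)=r(t,\alpha_{n-1},o_{n-1}(t))$ of \eqref{eq:identityforOnAndRn}, fed through the bound on the remainder $r$ from the proof of Proposition~\ref{prop:formulaForSDiff}(2) (applied with $v=u$, $b=\alpha_{n-1}$, $h(t)=o_{n-1}(t)$), namely, for $n\ge 2$,
\begin{multline*}
\norm{o_n(t)}{V}\le\Bigl(1+\tfrac{C_b}{C_a}\Bigr)\sup_{\lambda\in[0,1]}\norm{\Phi'(u+t\alpha_{n-1}+\lambda o_{n-1}(t))\,o_{n-1}(t)}{V}\\
+\Bigl(1+\tfrac{C_b}{C_a}\Bigr)\norm{l(t,\alpha_{n-1},u)}{V}+\norm{o(t,A\Phi'(u)(\alpha_{n-1})-d,\,A\Phi(u)-f)}{V}
\end{multline*}
(and the analogous bound with the convention $\alpha_0:=0$ for $n=1$, in which case the first two terms vanish), into a recurrence inequality for the quotients $\rho_n(t):=t^{-1}\norm{o_n(t)}{V}$, which can then be summed uniformly in $n$ by exploiting the smallness constant of \ref{itm:smallnessOfDerivOfPhi}.

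First I would control the leading term. For each fixed $n$ and each $\lambda\in[0,1]$, Proposition~\ref{prop:relationBetweenQnAndUn} (available under \ref{itm:PhiDiff} and \ref{item:assPhiDerivativeBounded}) gives $t^{-1}o_{n-1}(t)\to0$, hence $o_{n-1}(t)\to0$ in $V$; together with $t\alpha_{n-1}\to0$ this means the curve $t\mapsto u+t\alpha_{n-1}+\lambda o_{n-1}(t)$ converges to $u$ in $V$ as $t\to0^+$. Applying \ref{itm:smallnessOfDerivOfPhi} along this curve then gives, for all $t\in(0,T_0)$, $\sup_{\lambda\in[0,1]}\norm{\Phi'(u+t\alpha_{n-1}+\lambda o_{n-1}(t))\,o_{n-1}(t)}{V}\le C_\Phi\norm{o_{n-1}(t)}{V}$ with $C_\Phi<(1+C_a^{-1}C_b)^{-1}$, so that $\kappa:=(1+C_a^{-1}C_b)C_\Phi<1$. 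Substituting into the bound above and dividing by $t$ leaves the recurrence
\[
\rho_n(t)\le\kappa\,\rho_{n-1}(t)+\varepsilon_n(t)\qquad(n\ge 2,\ t\in(0,T_0)),
\]
where $\varepsilon_n(t):=(1+C_a^{-1}C_b)\,t^{-1}\norm{l(t,\alpha_{n-1},u)}{V}+t^{-1}\norm{o(t,A\Phi'(u)(\alpha_{n-1})-d,\,A\Phi(u)-f)}{V}$, together with $\rho_1(t)\le\varepsilon_1(t)$.

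The crux is to show $\varepsilon_n(t)\to0$ as $t\to0^+$ \emph{uniformly in $n$}. The decisive input is that, under \ref{itm:compContOfDerivOfPhi}, Lemma~\ref{lem:strongConvergenceOfDirDerv} supplies the \emph{strong} convergence $\alpha_n\to\alpha$ in $V$, so that $\{\alpha_n:n\in\mathbb N\}\cup\{\alpha\}$ is a compact subset of $V$; compact differentiability of $\Phi$ (a consequence of \ref{itm:PhiDiff}) then forces $t^{-1}\norm{l(t,\alpha_{n-1},u)}{V}\to0$ uniformly in $n$. Moreover $\Phi'(u)$ is completely continuous and $\{\alpha_{n-1}\}$ is bounded in $V$ by Theorem~\ref{thm:boundednessOfDirectionalDerivatives}, so $\{A\Phi'(u)(\alpha_{n-1})-d\}$ is relatively compact in $V^*$; since the remainder in Mignot's expansion \eqref{eq:mignot} tends to zero uniformly with respect to the direction on compact subsets of $V^*$ (the base point $A\Phi(u)-f$ being held fixed), the second term in $\varepsilon_n$ also goes to zero uniformly in $n$. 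Hence $\bar\varepsilon(t):=\sup_n\varepsilon_n(t)\to0$, and unwinding the recurrence gives $\rho_n(t)\le(1+\kappa+\dots+\kappa^{n-1})\bar\varepsilon(t)\le(1-\kappa)^{-1}\bar\varepsilon(t)$ for every $n$, so $\sup_n t^{-1}\norm{o_n(t)}{V}\le(1-\kappa)^{-1}\bar\varepsilon(t)\to0$ as $t\to0^+$, which is the assertion.

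I expect the main obstacle to be exactly this uniformity of the error terms: with only weak convergence $\alpha_n\weaklyto\alpha$ the set $\{\alpha_n\}$ need not be compact, and then neither the compact differentiability of $\Phi$ nor the uniform-on-compacts behaviour of Mignot's remainder can be brought to bear; and without the contraction factor $\kappa<1$ from \ref{itm:smallnessOfDerivOfPhi} the recurrence cannot be summed. This also pins down the roles of the hypotheses used: \ref{itm:compContOfDerivOfPhi} for the strong convergence (hence compactness), \ref{itm:smallnessOfDerivOfPhi} for the contraction factor, and \ref{item:assPhiDerivativeBounded} to guarantee that the curves $u+t\alpha_{n-1}+\lambda o_{n-1}(t)$ approach $u$ so that \ref{itm:smallnessOfDerivOfPhi} may be applied along them.
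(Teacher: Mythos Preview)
Your proof is correct and follows essentially the same route as the paper's: derive the recurrence $\rho_n(t)\le\kappa\,\rho_{n-1}(t)+\varepsilon_n(t)$ with $\kappa=(1+C_a^{-1}C_b)C_\Phi<1$ via \ref{itm:smallnessOfDerivOfPhi} and the remainder bound from Proposition~\ref{prop:formulaForSDiff}(2), show $\sup_n\varepsilon_n(t)\to0$ by the compact differentiability of $\Phi$ and of $S_0$ together with the strong convergence $\alpha_n\to\alpha$ from Lemma~\ref{lem:strongConvergenceOfDirDerv}, and then sum the geometric series. The paper organises this into four labelled steps and separates the base term $\rho_1(t)=t^{-1}\norm{o(t,d)}{V}$ from the $\varepsilon_j$'s (allotting $\epsilon/2$ to each), whereas you absorb $\rho_1$ into $\bar\varepsilon$; this is an inconsequential bookkeeping difference.
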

\begin{proof}
The proof is in three steps.
\noindent \paragraph{Step 1} Let us first show by induction that each $q_n(t)$ belongs to $B_{\epsilon \slash 2}(u)$, the closed ball of radius $\epsilon \slash 2$ around $u$. 
 For convenience, let $C_X := C_bC_a^{-1}$, which  is such that $C_\Phi C_X < 1$.
Fix an $\epsilon > 0$ and take 
\begin{equation}\label{eq:assOnT}
t \leq \frac{C_a(1-C_\Phi C_X)\epsilon}{2\norm{d}{V^*}}.
\end{equation}
We have the estimate
\begin{align*}
\norm{q_1(t)-u}{V} &\leq C_a^{-1}t\norm{d}{V^*} \leq \frac\epsilon 2,
\end{align*}
i.e., $q_1(t) \in B_{\epsilon\slash 2}(u).$ 
Suppose the claim holds for the $(n-1)$th term. Regarding $q_n(t)$, we estimate by testing the inequality for $q_n(t)$ with $u-\Phi(y) + \Phi(q_{n-1}(t))$ and the inequality for $u$ with $q_n(t)-\Phi(q_{n-1}(t))+\Phi(u)$:
\begin{align*}
\langle Aq_n(t) - (f+td), q_n(t)-u + \Phi(u)-\Phi(q_{n-1}(t))\rangle &\leq 0,\\
\langle Au - f, y-q_n(t) + \Phi(q_{n-1}(t))-\Phi(u)\rangle &\leq 0,
\end{align*}
whence adding, we obtain
\begin{align*}
\langle A(q_n(t)-u)-td, q_n(t)-u + \Phi(u)-\Phi(q_{n-1}(t)) \rangle \leq 0.
\end{align*}
This leads to 
\begin{align*}
C_a\norm{q_n(t)-u}{V}^2 &\leq C_b\norm{q_n(t)-u}{V}\norm{\Phi(u)-\Phi(q_{n-1}(t))}{V} + t\norm{d}{V^*}\norm{q_n(t)-u}{V}\\
&\quad + t\langle d, \Phi(u)-\Phi(q_{n-1}(t))\rangle \\
&\leq C_b\norm{q_n(t)-u}{V}\norm{\Phi(u)-\Phi(q_{n-1}(t))}{V} + t\norm{d}{V^*}\norm{q_n(t)-u}{V}
\end{align*}
since $d \geq 0$ and $y \leq q_{n-1}(t)$ and $\Phi$ is increasing, giving the bound
\begin{align*}
\norm{q_n(t)-u}{V} &\leq \frac{C_b}{C_a}\norm{\Phi(u)-\Phi(q_{n-1}(t))}{V} + \frac{t}{C_a}\norm{d}{V^*}.
\end{align*}
Now, recalling $C_X$ and using the mean value theorem, we get
\begin{align*}
\norm{q_n(t)-u}{V} 
&\leq C_a^{-1}t\norm{d}{V^*} + C_X\sup_{\lambda \in (0,1)}\norm{\Phi'(\lambda q_{n-1}(t) + (1-\lambda)u)(q_{n-1}(t)-u)}{V}\\
&\leq C_a^{-1}t\norm{d}{V^*} + C_XC_\Phi \norm{q_{n-1}(t)-u}{V}\tag{applying the assumption thanks to the induction hypothesis}\\
&\leq C_a^{-1}t\norm{d}{V^*}(1 + C_XC_\Phi + (C_XC_\Phi)^2 + ... + (C_XC_\Phi)^{n-1})\\
&\leq \frac{C_a^{-1}t\norm{d}{V^*}}{1-C_XC_\Phi }  \tag{by the formula for a geometric series}\\
&\leq \frac\epsilon 2,
\end{align*}
and hence $q_n(t) \in B_{\epsilon\slash 2}(u)$ for all $n$ as long as $t$ satisfies \eqref{eq:assOnT}. 
Observe that for $\lambda \in (0,1)$.
\begin{align*}
\norm{\lambda q_n(t) + (1-\lambda)(u+t\alpha_n)-u}{V} &= \norm{\lambda (q_n(t)-u) + (1-\lambda)t\alpha_n}{V}\\
&\leq \frac \epsilon 2 + t\norm{\alpha_n}{V}\\
&\leq \frac \epsilon 2 + tC^*
\end{align*}
where $C^*$ is the uniform bound (see Theorem \ref{thm:boundednessOfDirectionalDerivatives}) on $\{\alpha_n\}$. Thus if $t$ satisfies \eqref{eq:assOnT} and satisfies $t \leq {\epsilon}\slash {2C^*},$
then $\lambda q_n(t) + (1-\lambda)(u+t\alpha_n) \in B_{\epsilon}(u)$ for all $n$.  Since $\Phi'(u+t\alpha_n+\lambda o_n(t))= \Phi'(\lambda q_n(t) + (1-\lambda)(u+t\alpha_n))$, as long as
\begin{equation*}
t \leq \min\left(\frac{C_a(1-C_\Phi C_X)\epsilon}{2\norm{d}{V^*}}, \frac{\epsilon}{2C^*}\right)=: T_0,
\end{equation*}
we can use \ref{itm:smallnessOfDerivOfPhi} to obtain the boundedness
\begin{equation}\label{eq:requiredSmallness}
\norm{\Phi'(u+t\alpha_n + \lambda o_n(t))o_n(t)}{V} \leq C_\Phi\norm{o_n(t)}{V}.
\end{equation}
 Then Lemma \ref{lem:uniformConvergenceOfL} implies the estimate
\[\frac{\norm{\hat l(t, \alpha_{n-1}, \alpha_{n-1} + t^{-1}o_{n-1}(t))}{V}}{t} \leq C_\Phi \frac{\norm{o_{n-1}(t)}{V}}{t} + \frac{\norm{l(t,\alpha_{n-1})}{V}}{t}.\]
\noindent \paragraph{Step 2} By definition of $o_n$ in \eqref{eq:identityforOnAndRn},
\begin{align*}
o_n(t) &= r(t,\alpha_{n-1}, o_{n-1}(t))\\
&=\hat l(t,\alpha_{n-1},\alpha_{n-1}+t^{-1}o_{n-1}(t))\\
&\quad - \hat o(t,A\Phi'(u)(\alpha_{n-1})-d, A\Phi'(u)(\alpha_{n-1})-d + At^{-1}\hat l(t,\alpha_{n-1},\alpha_{n-1}+t^{-1}o_{n-1}(t))),
\end{align*}
and using the estimates of Lemmas \ref{lem:uniformConvergenceOfhatO} and \ref{lem:uniformConvergenceOfL} and the calculation in the previous step, we find
\begin{align*}
\norm{o_n(t)}{V} &\leq C_\Phi\norm{o_{n-1}(t)}{V} + \norm{l(t,\alpha_{n-1})}{V} +C_a^{-1}\norm{A\hat l(t,\alpha_{n-1},\alpha_{n-1}+t^{-1}o_{n-1}(t))}{V^*} + \norm{o(t,A\Phi'(u)(\alpha_{n-1}))}{V}\\
&\leq C_\Phi\norm{o_{n-1}(t)}{V} + \norm{l(t,\alpha_{n-1})}{V} + C_a^{-1}C_b\norm{\hat l(t,\alpha_{n-1},\alpha_{n-1}+t^{-1}o_{n-1}(t))}{V}+ \norm{o(t,A\Phi'(u)(\alpha_{n-1}))}{V}\\
&\leq C_\Phi(1+C_a^{-1}C_b)\norm{o_{n-1}(t)}{V} + (1+C_a^{-1}C_b)\norm{l(t,\alpha_{n-1})}{V} + \norm{o(t,A\Phi'(u)(\alpha_{n-1}))}{V}\tag{using again \eqref{eq:lBound}}\\
&< C\underbrace{\norm{o_{n-1}(t)}{V}}_{=:a_{n-1}(t)} + \underbrace{(1+C_a^{-1}C_b)\norm{l(t,\alpha_{n-1})}{V} + \norm{o(t,A\Phi'(u)(\alpha_{n-1}))}{V}}_{=:b_{n-1}(t)}
\end{align*}
for some $C<1$ by the assumption on $C_\Phi$ in \ref{itm:smallnessOfDerivOfPhi}. The above can be recast as
\[a_n(t) \leq Ca_{n-1}(t) + b_{n-1}(t)\]
which can be solved for $a_n$ in terms of $a_1$ and $\{b_i\}_{i=1}^{n-1}$:
\begin{align}
a_n(t) 
&\leq C^{n-1}a_{1}(t) + C^{n-2}b_1(t) + C^{n-3}b_2(t) + ... + Cb_{n-2}(t) + b_{n-1}(t)\label{eq:prelim2}.
\end{align}
\noindent \paragraph{Step 3}Let us see why
\[\frac{b_{n-1}(t)}{t}:= \frac{(1+C_a^{-1}C_b)\norm{l(t,\alpha_{n-1})}{V}}{t} + \frac{\norm{o(t,A\Phi'(u)(\alpha_{n-1}))}{V}}{t} \to 0\]
uniformly in $n$. By Lemma \ref{lem:strongConvergenceOfDirDerv}, $\{\alpha_{n-1}\}_n$ and $\{A\Phi'(u)(\alpha_{n-1})\}_n$ are both sets that are subsets of compact sets,  
hence, thanks to the compact differentiability of $\Phi$ (from \ref{itm:PhiDiff}) and compact differentiability of $S_0$, for any $\epsilon  > 0$, there exists a $T_1 > 0$ independent of $j$ such that 
\[t \leq T_1 \implies \frac{b_{j}(t)}{t} \leq \frac{(1-C)\epsilon}{2}\qquad \forall j.\]
\noindent \paragraph{Step 4} As $o_1(t) = r(t,0,0)=o(t,d)$ is a higher order term, we know that there is a $T_2 > 0$ such that
\[t \leq T_2 \implies \frac{\norm{o(t,d)}{V}}{t} \leq \frac{\epsilon}{2}.\]
Now recalling \eqref{eq:prelim2}, for $t \leq \min(T_0, T_1,T_2)$,
\begin{align*}
\frac{\norm{o_n(t)}{V}}{t}
&\leq C^{n-1}\frac{\norm{o(t,d)}{V}}{t} +
C^{n-2}\frac{b_1(t)}{t} + C^{n-3}\frac{b_2(t)}{t} + ...  + \frac{b_{n-1}(t)}{t}\\
&\leq \frac{\norm{o(t,d)}{V}}{t} + C^{n-2}\frac{b_1(t)}{t} + C^{n-3}\frac{b_2(t)}{t}+ ...  + \frac{b_{n-1}(t)}{t}\\
&\leq \frac{\epsilon}{2} + \frac{\epsilon(1-C)}{2}\left(C^{n-2} + C^{n-3} + ...  + C + 1\right)\tag{for any $\epsilon > 0$ by the previous step}\\
&= \frac{\epsilon}{2} + \frac{\epsilon(1-C)(1-C^{n-1})}{2(1-C)}\\
&\leq \epsilon
\end{align*}
This shows that $o_n(t)\slash t$ tends to zero uniformly in $n$.
\end{proof}
\subsection{Passing to the limit and conclusion}
Let us now pass to the limit in \eqref{eq:eqForqn} and conclude the main result. Sending $n \to \infty$, using all the convergence results we obtain
\[q(t)= u+ t\alpha + o^*(t).\]
We now prove that $o^*$ is indeed a higher order term. 
\begin{lemma}Let the conditions of Lemma \ref{lem:convergenceForOn} hold. The function $o^*$ satisfies
\[\frac{o^*(t)}{t} \to 0\quad\text{in $V$ as $t \to 0^+$}.\]
\end{lemma}
\begin{proof}
We present two proofs.
\paragraph{1. Without using compactness of $\Phi$}
We proved in Lemma \ref{lem:convergenceForOn} that for any $\epsilon > 0$, there exists a $T>0$ (independent of $n$) such that  if $t \leq T$, then $t^{-1}\norm{o_n(t)}{V} \leq \epsilon$. We can use $o_n(t) \weaklyto o^*(t)$ (see the previous subsection) and the weak lower semicontinuity of norms to deduce that also
\[\frac{\norm{o^*(t)}{V}}{t} \leq \liminf_{n \to \infty} \frac{\norm{o_n(t)}{V}}{t} \leq \epsilon.\]
\paragraph{2. Using compactness of $\Phi$}
In the last section we showed that
\begin{equation}\label{eq:con1}
\alpha_n +  \frac{o_n(t)}{t} \to \alpha_n \quad \text{as $t \to 0^+$ uniformly in $n$ in $V$}.
\end{equation}
Note that $q_n \to q$ strongly in $V$ due to \ref{itm:compContOfPhi}. From the equation \eqref{eq:eqForqn} the $q_n$ satisfies, since $q_n-u \to q-u$, we see that
\begin{equation}\label{eq:con2}
\alpha_n +  \frac{o_n(t)}{t} \to \frac{q(t)-u}{t} \quad \text{as $n \to \infty$ in $V$}.
\end{equation}
Now thanks to \eqref{eq:con1} and \eqref{eq:con2} we can  apply the Moore--Osgood theorem (eg. see \cite[\S I.7, Lemma 6]{DunfordSchwartzLinear}) which tells us that the double limit exists and that we can interchange the order of the limit-taking so that
\begin{align}
\nonumber \lim_{n\to\infty} \alpha_n &= \lim_{n\to\infty} \lim_{t \to 0^+} \left(t^{-1}o_n(t) + \alpha_n\right) = \lim_{t \to 0^+} \lim_{n\to\infty} \left(t^{-1}o_n(t) + \alpha_n\right)\\
&= \lim_{t \to 0^+} \frac{q(t)-u}{t} < \infty.\label{eq:middle}
\end{align}
Then
\[\frac{o_n(t)}{t} = \frac{q_n-u}{t}-\alpha_n \to \frac{q-u}{t}-\alpha \quad\text{as $n \to \infty$}\]
but the LHS weakly converges as $n \to \infty$ to $o^*(t)\slash {t}$, so that
\[\frac{q(t)-u}{t} = \alpha + \frac{o^*(t)}{t}\] and if we take the limit $t \to 0^+$ on this equality and use \eqref{eq:middle}, we find
\[\alpha + \frac{o^*(t)}{t} \to \alpha\quad\text{as $t \to 0^+$}.\]
\end{proof}
This completes the proof of Theorem \ref{thm:main}. It is worth discussing the assumptions \ref{itm:compContOfPhi} and \ref{itm:PhiConcaveEtc} and $f, d \in L^\infty_+(\Omega)$ which were used originally in Theorem \ref{thm:convergenceOfIterations}. Without these assumptions, we would still get the weak convergence of the iterates $q_n(t)$ but we cannot in general identify the weak limit as an element of $\mathbf{Q}(f+td)$. The convergence results of eg. Theorem \ref{thm:boundednessOfDirectionalDerivatives}  (regarding the directional derivatives $\alpha_n$) and Lemma \ref{lem:convergenceForOn} (regarding the higher order terms $o_n$) would still hold. Hence, if a different method is available to identify $q(t)$ then these assumptions can be removed or replaced.

\section{Application to thermoforming}\label{sec:thermoforming}

The aim of thermoforming is to manufacture products by heating a membrane or plastic sheet to its pliable temperature and then forcing the membrane (by means of vacuum or high gas pressure) onto a mould, commonly made of aluminium or some aluminium alloy, which makes the membrane deform and take on the shape of the mould. 

The process is applied to form large structures such as car panels but also to create microscopic products such as microfluidic structures (e.g. channels on the range of micrometers). The amount of applications and the necessity of precision of some of the thermoformed structures has sparked research into its modelling and accurate numerical simulation as can be seen in \cite{Munro2001} and \cite{Warby2003}. 

The contact problem associated with the heated plastic sheet and the mould can also be described as a variational inequality problem assuming perfect sliding of the membrane with the mould as described in \cite{Whiteman2000}. However, a complex phenomenon takes place when the heated sheet is forced into contact with the mould: in principle, the mould is not at the same temperature as the plastic sheet (it might be relatively cold with respect to membrane) which triggers a heat transfer process with difficult-to-predict consequences (see for example \cite{Lee2001}, e.g., it changes the polymer viscosity). In practice, the thickness of the thermoformed piece can be controlled locally by the mould structure and its initial temperature distribution (see \cite{Lee2001}) and the non-uniform temperature distribution of the polymer sheet has substantial changes on the results (see \cite{Nam2000}).

A common mould material is aluminum and the large heat fluctuations create a substantial difference in the size of the mould; aluminum has a relatively high thermal expansion volumetric coefficient and this implies that there is a dynamic change in the obstacle (the mould) as the polymer sheet is forced in contact with it. This determines a compliant obstacle-type problem as the one described in \cite{Outrata1998} and hence the overall process is a QVI with underlying complex nonlinear PDEs determining the heat transfer and the volume change in the obstacle.

In what follows we consider this compliant obstacle behavior whilst simultaneously making various simplifying assumptions in order to study a basic but nevertheless meaningful model.

\subsection{The model}

We restrict the analysis to the 1D case for the sake of simplicity; the results can be extended to the 2D or higher dimensional case with a few modifications. However, we provide 2D numerical tests. Let $\Phi_0\colon [0,1] \to \mathbb{R}^+$ be the (parametrised) mould shape that we wish to reproduce through a sheet (or membrane). The membrane lies below the mould and is pushed upwards through some mechanism (usually vacuum and/or air pressure) denoted as $f$.  
We make the following three simplified fundamental physical assumptions:
\begin{enumerate}
\item The temperature for the membrane is always a constant prescribed value 
\item The mould grows in an affine fashion with respect to changes in its temperature 
\item The temperature of the mould is subject to diffusion, convection and boundary conditions arising from the insulated boundary and it depends on the vertical distance between the mould and the membrane.
\end{enumerate}

Although the thermoforming process is a time evolution process, the setting described by assumptions 1-3 is appropriate for one time step in the time semi-discretization of such process, and thus fits the mathematical framework of the present paper.  

We denote the position of the mould and membrane by $\Phi(u)$ and $u$ respectively and $T$ will stand for the temperature of the mould. Let us define the spaces $W = H^1(0,1)$ and $H=L^2(0,1)$ and let either $A=-\Delta_{N} + I$ and $V=H^1(0,1)$ or $A=-\Delta_D$ and $V=H^1_0(0,1)$ in the case of Neumann or Dirichlet boundary respectively for the membrane $u$ (zero Dirichlet conditions arise from clamping the the membrane at its ends). The system we consider is the following:
\begin{align}
u \in V : u \leq \Phi(u), \quad \langle Au-f, u-v \rangle &\leq 0\quad \forall v \in V : v \leq \Phi(u)\label{eq:QVIforu}\\
kT-\Delta T &= g(\Phi(u)-u) &&\text{on $[0,1]$}\label{eq:pdeForT}\\
\partial_\nu T &= 0 &&\text{on $\{0, 1\}$ }\label{eq:bcForT}\\
\Phi(u) &= \Phi_0 + LT, &&\text{on $[0,1]$}\label{eq:mould}
\end{align}
where 
$f \in H_+$, $k>0$ is a constant, $\Phi_0 \in V$, $L\colon W \to V$ 
is a bounded linear operator such that
\[
\text{for every $\Omega_0 \subset \Omega$, if $u \leq v$ a.e. on $\Omega_0$ then $Lu \leq Lv$ a.e. on $\Omega_0$,}\]
and  $g\colon \mathbb{R} \to \mathbb{R}$ is decreasing and $C^2$ with $g(0)=M > 0$ a constant, $0 \leq g \leq M$ and $g'$ bounded\footnote{Under these circumstances, $g$ maps $W$ into $W$ \cite[Theorem 1.18]{MR1207810}}. Thus when the membrane and mould are in contact or are close to each other, there is a maximum level of heat transfer onto the mould, whilst when they are sufficiently separated, there is no heat exchange. An example of $g$ to have in mind is a smoothing of the function
\begin{equation}\label{eq:exampleg}
g(r) = \begin{cases}
1 &: \text{if $r \leq 0$}\\
1-r &: \text{if $0 < r < 1$}\\
0 &: \text{if $r \geq 1$}
\end{cases}.
\end{equation}
Note that the local increasing property of $L$ stated above is equivalent to
\begin{equation}\label{item:nonnegativityOfL} 
\text{for every $v \in W$, $vLv \geq 0$ a.e.}
\end{equation}
When $V=H^1_0(0,1)$ is chosen 
$L$ must carry $H^1(0,1)$ into $H^1_0(0,1)$ (as assumed); an example of such an $L$ is given by the pointwise multiplication by a smooth bump function or mollifier that approximates the identity and vanishes near the boundary.
This assumption guarantees that  $\Phi\colon H \to V$ in every case. If $V=H^1(0,1)$ then $V \equiv W$. 

The system above is derived as follows. Consideration of the potential energy of the membrane will show that $u$ solves the inequality \eqref{eq:QVIforu} \cite{Whiteman2000} with the novel QVI nature resulting from assuming that heat transfer occurs between the membrane and mould (the membrane modifies the mould and vice versa). If we let $\hat T\colon \Gamma \to \mathbb{R}$ be the temperature of the mould defined on the curve
\[\Gamma :=\{(r,\Phi(u)(r)) : r \in [0,1]\} \subset \mathbb{R}^2\]
(which is a 1D hypersurface in 2D), our modelling assumptions directly imply that $\hat T$ solves the PDE
\begin{equation}\label{eq:pdeForhatT}
\begin{aligned}
k\hat T(x)-\Delta_\Gamma \hat T(x) &= g(x_2-u(x_1)) &&\text{for $x=(r,\Phi(u)(r)) \in \Gamma$}\\
\frac{\partial \hat T}{\partial \nu} &= 0 &&\text{on $\partial\Gamma$ 
}
\end{aligned}
\end{equation}
where the notation $x_i$ means the $i$th component of $x$. We reparametrise by $T(r) = \hat T(r, \Phi(r))$ and simplify the equation in \eqref{eq:pdeForhatT} (namely the Laplace--Beltrami term, see Appendix \ref{sec:curvature}) to obtain \eqref{eq:pdeForT}.


\subsection{First properties and existence for the system}
Plugging in \eqref{eq:mould} into \eqref{eq:pdeForT} we obtain
\begin{equation}\label{eq:equationForT}
\begin{aligned}
kT-\Delta T &= g(LT+\Phi_0 -u) &&\text{on $[0,1]$}\\
\partial_\nu T &= 0 &&\text{on $\{0, 1\}$ }
\end{aligned}
\end{equation}
\begin{lemma}\label{lem:existenceForT}For every (given) $u \in H$, there exists a unique solution $T \in W$ to the equation \eqref{eq:equationForT}.
\end{lemma}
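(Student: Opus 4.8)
The plan is to recast \eqref{eq:equationForT} as a fixed-point equation and apply Schauder's theorem. Let $\mathcal{B}\colon W \to W^*$ be the operator associated to the bounded bilinear form $(\psi,\varphi)\mapsto \int_0^1 (k\psi\varphi + \psi'\varphi')$; since $k>0$ this form is coercive on $W=H^1(0,1)$ with constant $C_a := \min(k,1)$, so by the Lax--Milgram lemma $\mathcal{B}^{-1}\colon W^* \to W$ exists, is bounded, and is positivity preserving (it is the Neumann solution operator of $kI-\Delta$). For the given $u \in H$, define $N\colon W \to H$ by $N(w) := g(Lw + \Phi_0 - u)$, which is well defined and, since $0 \leq g \leq M$, satisfies $\norm{N(w)}{L^2(0,1)} \leq M$ for every $w\in W$. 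Then $T\in W$ solves \eqref{eq:equationForT} if and only if $T = \mathcal{F}(T)$, where $\mathcal{F} := \mathcal{B}^{-1}N\colon W \to W$ and $H$ is viewed as continuously embedded into $W^*$.

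I would then verify the hypotheses of Schauder's fixed-point theorem. \emph{Self-map of a ball:} $\norm{\mathcal{F}(w)}{W} \leq C_a^{-1}M$ for all $w$, so $\mathcal{F}$ maps the closed ball $B := \{w \in W : \norm{w}{W}\leq C_a^{-1}M\}$ into itself. \emph{Continuity:} if $w_n \to w$ in $W$ then $Lw_n \to Lw$ in $V$ and hence in $L^2(0,1)$, so $Lw_n+\Phi_0-u \to Lw+\Phi_0-u$ in $L^2(0,1)$; since $g$ is continuous and bounded, the induced Nemytskii operator is continuous on $L^2(0,1)$ (extract an a.e.\ convergent subsequence, apply dominated convergence with majorant $M$, then pass to the full sequence by a subsequence argument), so $N(w_n)\to N(w)$ in $L^2(0,1)\hookrightarrow W^*$ and $\mathcal{F}(w_n)\to\mathcal{F}(w)$ in $W$. \emph{Compactness:} for $w \in B$ the function $T := \mathcal{F}(w)$ satisfies $T'' = kT - N(w) \in L^2(0,1)$ with $\norm{T''}{L^2(0,1)}\leq kC_a^{-1}M + M$, so $\{\mathcal{F}(w) : w \in B\}$ is bounded in $H^2(0,1)$, which is compactly embedded in $H^1(0,1)$ on the interval $(0,1)$. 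Schauder's theorem then yields a fixed point $T\in B$, i.e.\ a weak solution of \eqref{eq:equationForT} in $W$.

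For uniqueness, let $T_1,T_2\in W$ solve \eqref{eq:equationForT} and put $w := T_1 - T_2$, so $\mathcal{B}w = N(T_1)-N(T_2)$ in $W^*$. Testing with $w$, using $|g(a)-g(b)|\leq\norm{g'}{\infty}|a-b|$ and the boundedness of $L$ gives
\[
C_a\norm{w}{W}^2 \leq \langle \mathcal{B}w,w\rangle = \int_0^1 \big(g(LT_1+\Phi_0-u)-g(LT_2+\Phi_0-u)\big)\,w \leq \norm{g'}{\infty}\,\norm{L}{}\,\norm{w}{W}^2 ,
\]
so $w=0$ provided $\norm{g'}{\infty}\norm{L}{}<C_a$; in that case $\mathcal{F}$ is in fact a contraction on $W$ and Banach's theorem delivers existence and uniqueness simultaneously. (One may also use the order structure: $L$ order preserving and $g$ decreasing make $\mathcal{F}$ order reversing, and positivity of $\mathcal{B}^{-1}$ forces $0\leq\mathcal{F}(w)\leq M/k$, confining every solution to $[0,M/k]$.)

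The Lax--Milgram step and the one-dimensional elliptic estimate are routine. The points requiring care are the continuity of the composition $\mathcal{F}$ through the Nemytskii operator $N$ (hence the dominated-convergence/subsequence argument), and — for the uniqueness claim — the quantitative comparison of $\norm{g'}{\infty}\norm{L}{}$ with the coercivity constant $C_a=\min(k,1)$, which is the only place where a smallness-type compatibility of the model data enters.
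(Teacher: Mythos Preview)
Your existence argument via Schauder's theorem is correct and is a genuinely different route from the paper. The paper instead writes the problem as $BT=0$ with $BT:=(k-\Delta)T-g(LT+\Phi_0-u)$, checks that $B\colon W\to W^*$ is bounded and coercive, observes that $w\mapsto g(Lw+\Phi_0-u)$ is completely continuous from $W$ into $H$ (via $W\hookrightarrow\hookrightarrow H$ and Lipschitzness of $g$), and then applies a surjectivity result for pseudomonotone/type-M operators (Showalter, Corollary~2.2). Your fixed-point approach is arguably more elementary and has the pleasant by-product of the $H^2$ bound; the paper's approach avoids the compactness step and would generalise more readily to higher dimensions or rougher data.

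There is, however, a genuine gap in your uniqueness argument. You conclude $T_1=T_2$ only under the extra smallness hypothesis $\norm{g'}{\infty}\norm{L}{}<\min(k,1)$, but the lemma is stated (and proved in the paper) without any such assumption --- the hypothesis you invoke is the later condition~\ref{item:smallnessForF}, which is \emph{not} in force here. The paper obtains uniqueness unconditionally by exploiting the sign structure: with $u_1=u_2=u$, testing the difference equation with $T_1-T_2$ gives
\[
\min(k,1)\norm{T_1-T_2}{W}^2=\int\big(g(LT_1+\Phi_0-u)-g(LT_2+\Phi_0-u)\big)(T_1-T_2).
\]
On the set $\{T_1\geq T_2\}$ the local order-preserving property of $L$ yields $LT_1\geq LT_2$ there, so by monotonicity of $g$ the integrand is $\leq 0$; the same holds on $\{T_1<T_2\}$. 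Hence the right-hand side is nonpositive and $T_1=T_2$. Your parenthetical remark about the order structure is pointing in the right direction, but confining solutions to $[0,M/k]$ is not the same as proving there is only one; you should replace the Lipschitz estimate by the sign argument above.
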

\begin{proof}
Define the nonlinear operator $B$ by $BT := (k-\Delta)T - g(LT + \Phi_0 -u)$. We see that
\begin{align*}
\norm{BT}{W^*} &= \sup_{v \in W}|\langle BT, v \rangle|\\
&\leq \sup_v \max(1,k)\norm{T}{W}\norm{v}{W} + \norm{g}{L^\infty}\norm{v}{L^1}\tag{using the bound on $g$}\\
&\leq \sup_v \max(1,k)\norm{T}{W}\norm{v}{W} + \norm{g}{L^\infty}\norm{v}{W}\\
&=\max(1,k)\norm{T}{W}+ \norm{g}{L^\infty},
\end{align*}
so if $T$ lies in a bounded subset of $W$, as does $BT$, thus $B$ is a bounded operator. It is also coercive since
\begin{align*}
\langle BT, T \rangle 
&\geq \min(1,k)\norm{T}{W}^2 - \norm{g}{L^\infty}\norm{T}{W}
\end{align*}
implies that
\[\frac{\langle BT, T \rangle}{\norm{T}{W}} 
\geq \min(1,k)\norm{T}{W} - \norm{g}{L^\infty}\]
which tends to infinity as $T \to \infty$.

Now take $T_n \weaklyto T$ in $W$. By continuity, $LT_n \weaklyto LT$ in $V$ and this convergence is strong in $H$. Then
\begin{align*}
\norm{g(LT_n+\Phi_0-u)-g(LT+\Phi_0-u)}{H}
&\leq\Lip{g}\norm{LT_n-LT}{H}\\
&\to 0,
\end{align*}
hence we have complete continuity of $g(L(\cdot)+ \Phi_0-u)\colon W \to H$.

The map $(k-\Delta)\colon W \to W^*$ is monotone and hemicontinuous, hence it is of type M \cite[Lemma 2.2]{Showalter}. Since $g(L(\cdot)-u)\colon W \to W^*$ is completely continuous, by \cite[Example 2.B]
{Showalter}, the sum $B$ is of type M. Collecting all of the above facts, we may apply Corollary 2.2 of \cite{Showalter} to obtain the existence.

Uniqueness follows from a continuous dependence result for two solutions for two data:
\begin{align*}
kT_1 - \Delta T_1 &= g(LT_1+\Phi_0-u_1)\\
kT_2 - \Delta T_2 &= g(LT_2+\Phi_0-u_2)\\
\partial_\nu T_1 &= 0\\
\partial_\nu T_2 &= 0.
\end{align*}
Take the difference to find
\begin{align*}
\min(k,1)\norm{T_1-T_2}{W}^2 &= \int (g(LT_1+\Phi_0-u_1)-g(LT_2+\Phi_0-u_2))(T_1-T_2).
\end{align*}
Since $L$ is locally increasing, uniqueness can be inferred directly  using the decreasing property of $g$. 
\end{proof}
\begin{lemma}\label{lem:PhiZeroNonNegative}It holds that $\Phi(0) \geq 0$ a.e.
\end{lemma}
\begin{proof}
Note that $\Phi(0) = \Phi_0 + LT|_{u=0} =: \Phi_0 + LT_0$ where $T_0$ solves \eqref{eq:equationForT} with the right hand side equal to $g(LT_0 + \Phi_0)$, i.e.
\begin{equation*}
\begin{aligned}
kT_0-\Delta T_0 &= g(LT_0+\Phi_0) &&\text{on $[0,1]$}\\
\partial_\nu T_0 &= 0 &&\text{on $\{0, 1\}$}.
\end{aligned}
\end{equation*}
Test this equation with $T_0^-$:
\[k\int |T_0^-|^2 + \int |\nabla T_0^-|^2 = -\int g(LT_0 + \Phi_0)T_0^- \leq 0\]
which immediately implies that $T_0 \geq 0$. The claim follows by the local increasing property of $L$.
\end{proof}
\begin{lemma}\label{lem:PhiIncreasing}
The map $\Phi\colon H \to V \subset H$ is increasing.
\end{lemma}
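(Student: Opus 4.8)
The plan is to reduce the monotonicity of $\Phi$ to that of the temperature solution map $u \mapsto T(u)$, and then to establish the latter by a standard comparison argument, testing the difference of the two temperature equations with a positive part. Concretely, fix $u_1, u_2 \in V$ with $u_1 \le u_2$ a.e.\ and let $T_i := T(u_i) \in W$ be the corresponding solutions of \eqref{eq:equationForT} furnished by Lemma \ref{lem:existenceForT}. Since $\Phi(u_i) = \Phi_0 + LT_i$ and $L$ is increasing (apply the local monotonicity hypothesis on $L$ with $\Omega_0 = \Omega$), it suffices to prove that $T_1 \le T_2$ a.e.

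To this end, I would subtract the two equations
\begin{align*}
kT_1 - \Delta T_1 &= g(LT_1+\Phi_0-u_1),\\
kT_2 - \Delta T_2 &= g(LT_2+\Phi_0-u_2),
\end{align*}
and test the resulting (homogeneous Neumann) equation for $T_1 - T_2$ with $(T_1-T_2)^+ \in W$, which gives
\[
k\norm{(T_1-T_2)^+}{H}^2 + \norm{\nabla(T_1-T_2)^+}{H}^2 = \int_0^1 \big(g(LT_1+\Phi_0-u_1) - g(LT_2+\Phi_0-u_2)\big)(T_1-T_2)^+ .
\]
The key point is that the right-hand integrand is pointwise nonpositive. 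On the set $\Omega_0 := \{T_1 > T_2\}$ we have $T_2 \le T_1$ a.e.\ on $\Omega_0$, so the local increasing property of $L$ yields $LT_2 \le LT_1$ a.e.\ on $\Omega_0$; combining this with $-u_1 \ge -u_2$ (from $u_1 \le u_2$) gives $LT_1 + \Phi_0 - u_1 \ge LT_2 + \Phi_0 - u_2$ a.e.\ on $\Omega_0$, and since $g$ is decreasing, $g(LT_1+\Phi_0-u_1) \le g(LT_2+\Phi_0-u_2)$ a.e.\ on $\Omega_0$. As $(T_1-T_2)^+$ vanishes off $\Omega_0$ and is nonnegative on it, the integrand is $\le 0$ everywhere, so the left-hand side above is $\le 0$. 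Since $k>0$, this forces $(T_1-T_2)^+ = 0$, i.e.\ $T_1 \le T_2$ a.e., and hence $\Phi(u_1) = \Phi_0 + LT_1 \le \Phi_0 + LT_2 = \Phi(u_2)$ a.e.

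I do not expect a genuine obstacle here; the argument is a routine comparison estimate, and the existence and $W$-regularity of $T_i$ have already been secured in Lemma \ref{lem:existenceForT}. The only point that requires care is applying the monotonicity of $L$ on the correct measurable subset $\{T_1 > T_2\}$ rather than globally (we have no a priori ordering of $T_1$ and $T_2$ before running the argument), together with keeping the two sign conventions straight, namely that $g$ is decreasing while $L$ is increasing, so that the two effects combine in the same direction in the integrand.
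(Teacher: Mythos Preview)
Your proof is correct and follows essentially the same approach as the paper's: reduce to monotonicity of $u \mapsto T(u)$, subtract the two temperature equations, test with $(T_1-T_2)^+$, and use the local increasing property of $L$ on $\{T_1 \ge T_2\}$ together with the fact that $g$ is decreasing to conclude the right-hand side is nonpositive. The paper's argument is identical in structure and detail.
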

\begin{proof}
To show that $u \mapsto \Phi(u) = LT(u) + \Phi_0$ is increasing, it suffices to show that $u \mapsto T(u)$ is increasing. Take the solutions $T_1$ and $T_2$ of the equation \eqref{eq:equationForT} corresponding to $u=u_1$ and $u=u_2$ (and these solutions exist by Lemma \ref{lem:existenceForT}) with $u_1 \leq u_2$, 
take the difference of the equations and test with $(T_1-T_2)^+$:
\begin{align*}
&\int k|(T_1-T_2)^+|^2 + |\nabla (T_1-T_2)^+|^2\\
&\quad= \int (g(LT_1+\Phi_0-u_1)-g(LT_2+\Phi_0-u_2))(T_1-T_2)^+\\
&\quad=\int_{\{T_1\geq T_2 \}} (g(LT_1+\Phi_0-u_1)-g(LT_2+\Phi_0-u_2))(T_1-T_2)
\end{align*}
On the area of integration, we have by the local increasing property of $L$ that $LT_1\geq LT_2$, and since $u_1 \leq u_2$, we have $-u_1 \geq -u_2$, which implies that $LT_1+\Phi_0-u_1\geq LT_2+\Phi_0-u_2$ pointwise a.e. Since $g$ is of superposition type and is decreasing, $g(LT_1+\Phi_0-u_1)-g(LT_2+\Phi_0-u_2) \leq 0$, and hence the above integral (whose integrand is the product of non-positive and non-negative terms) is less than or equal to zero. Hence we have that $(T_1-T_2)^+ = 0$ in $H$ giving $T_1 \leq T_2$ on $\Omega$. Applying $L$ to both sides and using the increasing property, we find the result.
\end{proof}
\begin{theorem}\label{thm:existenceForThermoformingSystem}
There exists a solution $(u, T, \Phi(u))$ to the system \eqref{eq:QVIforu}, \eqref{eq:pdeForT}, \eqref{eq:bcForT}, \eqref{eq:mould}.
\end{theorem}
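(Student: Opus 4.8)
The plan is to recognise that, thanks to the preparatory lemmas, the coupled system has already been reduced to a single quasi-variational inequality of the type treated earlier, and then to invoke Theorem~\ref{thm:existenceForQVI}. The key observation is that the temperature equation can be solved first, for a given membrane position $u$, and used to \emph{define} the obstacle map $\Phi$.

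First I would fix $u \in H$ and recall that substituting the constitutive relation \eqref{eq:mould} into the heat equation \eqref{eq:pdeForT}--\eqref{eq:bcForT} produces exactly \eqref{eq:equationForT}, which by Lemma~\ref{lem:existenceForT} admits a unique solution $T = T(u) \in W$. This lets me set $\Phi(u) := \Phi_0 + LT(u)$; since $\Phi_0 \in V$ and $L\colon W \to V$ is bounded and linear, $\Phi$ is a well-defined map from $H$ (in particular from $V$) into $V$, so that $\mathbf{K}(\varphi) := \{v \in V : v \leq \Phi(\varphi)\}$ is, for each $\varphi$, a non-empty (it contains $\Phi(\varphi)$ itself), closed, convex subset of $V$. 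Then I would collect the structural properties of this $\Phi$ already established: $\Phi(0) \geq 0$ a.e.\ (Lemma~\ref{lem:PhiZeroNonNegative}) and $\Phi$ increasing (Lemma~\ref{lem:PhiIncreasing}). Together with the fact that in the admissible cases the pair $(A,V)$ fits the abstract framework of the introduction (cf.\ Example~1 of \S\ref{sec:examples}) and that $f \in H_+ \subset V^*_+$, every standing assumption on $\Phi$ and the data is in force.

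With these in hand, the membrane inequality \eqref{eq:QVIforu} is precisely the QVI \eqref{eq:QVI} for this particular $\Phi$, so Theorem~\ref{thm:existenceForQVI} applies and furnishes a solution $u \in V$ of \eqref{eq:QVIforu} with $u \in [0,\bar u]$, $\bar u$ being the solution of \eqref{eq:baru}. To conclude I would take $T := T(u)$, the unique solution of \eqref{eq:equationForT} associated with this $u$, and set $\Phi(u) := \Phi_0 + LT$: then \eqref{eq:mould} holds by definition, \eqref{eq:equationForT} unwinds into \eqref{eq:pdeForT} and \eqref{eq:bcForT}, and \eqref{eq:QVIforu} is the inequality just solved, so $(u, T, \Phi(u))$ solves the whole system.

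The genuine work is all in the lemmas preceding the theorem --- well-posedness of $u \mapsto T(u)$ (Lemma~\ref{lem:existenceForT}), non-negativity of $\Phi(0)$, and monotonicity of $\Phi$ --- so the theorem itself is close to a corollary of Theorem~\ref{thm:existenceForQVI}. The only points requiring a little care are that $\Phi$ should take values in $V$ rather than merely in $H$ (so that $\mathbf{K}(\varphi)$ is the correct object and the change of variables underlying the abstract theory is available), which is exactly why the hypothesis that $L$ maps into $V$ --- e.g.\ cutting off near the boundary in the Dirichlet case --- is imposed, and the routine verification that $(A,V)$ genuinely defines a regular positivity preserving coercive form on $L^2(0,1)$.
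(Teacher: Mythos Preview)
Your proposal is correct and follows essentially the same approach as the paper: define $\Phi$ via the unique solution $T(u)$ of \eqref{eq:equationForT} from Lemma~\ref{lem:existenceForT}, invoke Lemmas~\ref{lem:PhiZeroNonNegative} and~\ref{lem:PhiIncreasing} to verify the standing hypotheses on $\Phi$, and then apply the Tartar--Birkhoff existence result (Theorem~\ref{thm:existenceForQVI}) to solve the QVI. Your write-up is in fact more explicit than the paper's short proof, which simply cites the three lemmas and the Tartar--Birkhoff theory.
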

\begin{proof}
By Lemmas \ref{lem:PhiZeroNonNegative} and \ref{lem:PhiIncreasing} and the Tartar--Birkhoff theory, there exists a solution $u$ to the QVI \eqref{eq:QVIforu} --- an explicit expression for $\Phi(u)$ in terms of $u$ is not needed. Now, with this $u$ fixed, apply Lemma \ref{lem:existenceForT} to uniquely determine $T(u)$ and thus $\Phi(u)$, and consequently \eqref{eq:pdeForT} has a solution $T$.
\end{proof}
Now, before we discuss compactness of $\Phi$, let us give the following continuous dependence result for two solutions $T_1, T_2$ corresponding to data $u_1, u_2$:
\begin{align}
\nonumber \min(k,1)\norm{T_1-T_2}{W}^2 &= \int (g(LT_1+\Phi_0-u_1)-g(LT_2+\Phi_0-u_2))(T_1-T_2)\\
\nonumber &\leq \Lip{g}\int |LT_1-LT_2 + u_2 - u_1||T_1-T_2|\\
\nonumber &\leq \Lip{g}\left(\norm{LT_1-LT_2}{H}\norm{T_1-T_2}{H} + \norm{u_1 - u_2}{H}\norm{T_1-T_2}{H}\right)\\
&\leq \Lip{g}\left(\norm{L}{\mathcal{L}(W, H)}\norm{T_1-T_2}{W}^2+ \norm{u_1 - u_2}{H}\norm{T_1-T_2}{H}\right).\label{eq:ctsDependenceForT}
\end{align}
 We use the following hypothesis at various points.
\begin{enumerate}[label=(\textbf{X\arabic*})]
\item\label{item:smallnessForF} $\Lip{g}\norm{L}{\mathcal{L}(W,H)} < \min(1,k)$ 
\end{enumerate}
\begin{lemma}If \ref{item:smallnessForF} holds, $\Phi\colon V \to V$ is completely continuous.
\end{lemma}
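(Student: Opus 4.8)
The plan is to exploit the continuous dependence estimate \eqref{eq:ctsDependenceForT} together with the compact embedding $V \hookrightarrow H$. Recall that $\Phi(u) = LT(u) + \Phi_0$, where $T(u) \in W$ is the unique solution of \eqref{eq:equationForT} (Lemma \ref{lem:existenceForT}) and $L \colon W \to V$ is bounded and linear. Since complete continuity means sending weakly convergent sequences to strongly convergent ones, I would take $u_n \weaklyto u$ in $V$ and aim to prove $T(u_n) \to T(u)$ strongly in $W$; applying the bounded operator $L$ then yields $\Phi(u_n) = LT(u_n) + \Phi_0 \to LT(u) + \Phi_0 = \Phi(u)$ in $V$.

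First I would apply \eqref{eq:ctsDependenceForT} with $T_1 = T(u_n)$, $u_1 = u_n$ and $T_2 = T(u)$, $u_2 = u$, and then rearrange to absorb the term $\norm{T(u_n)-T(u)}{W}^2$ that appears on the right-hand side into the left. This is exactly the point at which \ref{item:smallnessForF} enters: since $\Lip{g}\norm{L}{\mathcal{L}(W,H)} < \min(1,k)$, the coefficient $\min(1,k) - \Lip{g}\norm{L}{\mathcal{L}(W,H)}$ is strictly positive, and (after first disposing of the trivial case $T(u_n) = T(u)$) dividing through by $\norm{T(u_n)-T(u)}{W}$ gives the Lipschitz-type bound
\[
\norm{T(u_n) - T(u)}{W} \leq \frac{\Lip{g}}{\min(1,k) - \Lip{g}\norm{L}{\mathcal{L}(W,H)}}\,\norm{u_n - u}{H}.
\]
The crucial feature is that only the $H$-norm of $u_n - u$ appears on the right.

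Finally, since $\Omega = (0,1)$ is bounded, the embedding $V \hookrightarrow H$ is compact, so $u_n \weaklyto u$ in $V$ forces $u_n \to u$ strongly in $H$; hence the right-hand side above tends to $0$, giving $T(u_n) \to T(u)$ in $W$, and therefore $\Phi(u_n) \to \Phi(u)$ in $V$ as claimed. There is no serious obstacle here — the whole argument hinges on the absorption step made possible by \ref{item:smallnessForF}, and the only mild care needed is to handle the division by $\norm{T(u_n)-T(u)}{W}$ cleanly. (The same estimate also shows that $\Phi$ maps bounded sets into bounded sets, so in particular $\Phi$ is compact, consistent with the remark on compactness versus complete continuity.)
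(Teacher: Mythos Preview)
Your proof is correct and follows essentially the same route as the paper: apply the continuous dependence estimate \eqref{eq:ctsDependenceForT}, use \ref{item:smallnessForF} to absorb the $\norm{T(u_n)-T(u)}{W}^2$ term on the right into the left, divide through, and conclude via the compact embedding $V \hookrightarrow H$. The only difference is that you make the compact embedding step explicit, whereas the paper leaves it implicit when it says ``take the limit''.
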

\begin{proof}
Suppose that $u_n \weaklyto u$ in $V$ and consider the PDEs corresponding to data $u_n$ and $u$:
\begin{align*}
kT_n - \Delta T_n &= g(LT_n+\Phi_0-u_n)\\
kT - \Delta T &= g(LT+\Phi_0-u)\\
\partial_\nu T_n &= 0\\
\partial_\nu T &= 0
\end{align*}
We wish to show that $T_n \to T$. The estimate \eqref{eq:ctsDependenceForT} implies
\begin{align*}
\min(k,1)\norm{T_n-T}{W}^2 &\leq \Lip{g}\left(\norm{L}{\mathcal{L}(W, H)}\norm{T_n-T}{W}^2+ \norm{u - u_n}{H}\norm{T_n-T}{H}\right).
\end{align*}
Thus under the condition in the lemma, we can move the first term on the RHS onto the LHS, divide by $\norm{T_n-T}{}$ and then take the limit to see that $T_n \to T$ in $W$ and by continuity of $L\colon W \to V$ that $\Phi(u_n) \to \Phi(u)$ in $V$. 
\end{proof}
\subsection{Differentiability of $\Phi$}
We want to prove now that $\Phi\colon V \to V$ is differentiable.
\begin{theorem}If $g''$ is bounded from above, the map $\Phi\colon V \to V$ is Fr\'echet differentiable at a solution $u$ given by Theorem \ref{thm:existenceForThermoformingSystem}. 
Furthermore, $-L\delta:=\Phi'(u)(d)$ satisfies the PDE
\begin{equation*}\label{eq:pdeForDelta}
(k-\Delta)\delta - g'(\Phi(u)-u)L\delta = g'(\Phi(u)-u)d.
\end{equation*}
\end{theorem}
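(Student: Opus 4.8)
The plan is to reduce the claim to the Fr\'echet differentiability of the map $u \mapsto T(u)$, $V \to W$, where $T(u) \in W$ is the unique solution of \eqref{eq:equationForT}; since $\Phi(u) = \Phi_0 + LT(u)$ with $L$ bounded and linear, $\Phi$ is then Fr\'echet differentiable at $u$ with $\Phi'(u)(d) = L\,T'(u)(d)$. Write $a := \Phi(u)-u = LT(u) + \Phi_0 - u$; because we are in 1D, $V,W \hookrightarrow C[0,1]$, so $a$ is a bounded function. The candidate for $-T'(u)(d)$ is the solution $\delta$ of the formally linearised problem
\begin{equation}\label{eq:planLinPDE}
(k-\Delta)\delta - g'(a)L\delta = g'(a)d, \qquad \partial_\nu\delta = 0 \text{ on } \{0,1\},
\end{equation}
which would give $\Phi'(u)(d) = -L\delta$ and the stated PDE. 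The first step is to solve \eqref{eq:planLinPDE} by Lax--Milgram applied to $B(\delta,v) := k\int\delta v + \int\nabla\delta\,\nabla v - \int g'(a)(L\delta)v$ on $W$. Boundedness of $B$ is clear from $g' \in L^\infty$ and $L$ bounded. Coercivity is where assumption \ref{item:nonnegativityOfL} is indispensable: since $g$ is decreasing, $g' \leq 0$, and $(L\delta)\delta \geq 0$ a.e. by \ref{item:nonnegativityOfL}, so $-\int g'(a)(L\delta)\delta \geq 0$ and hence $B(\delta,\delta) \geq \min(1,k)\norm{\delta}{W}^2$. Thus $\delta$ exists, is unique, $d\mapsto\delta$ is linear, and $\norm{\delta}{W} \leq \min(1,k)^{-1}\norm{g'}{L^\infty}\norm{d}{H} \leq C\norm{d}{V}$, so $d\mapsto -L\delta$ is a bounded linear operator $V\to V$.

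Next I would establish Lipschitz dependence of $T$. Let $e := T(u+d)-T(u)$ and $h := Le - d$; subtracting the equations \eqref{eq:equationForT} for $u+d$ and $u$ gives $(k-\Delta)e = g(a+h) - g(a) = \big(\int_0^1 g'(a+sh)\,ds\big)h$. Testing with $e$ and writing $h = Le - d$, the term $\int\big(\int_0^1 g'(a+sh)\,ds\big)(Le)e$ is $\leq 0$ (the averaged derivative is $\leq 0$ and $(Le)e \geq 0$ a.e. by \ref{item:nonnegativityOfL}), so $\min(1,k)\norm{e}{W}^2 \leq \norm{g'}{L^\infty}\norm{d}{H}\norm{e}{W}$, whence $\norm{e}{W} \leq C\norm{d}{H} \leq C\norm{d}{V}$ and consequently $\norm{h}{W} \leq \norm{L}{}\norm{e}{W} + \norm{d}{W} \leq C\norm{d}{V}$.

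For the remainder, put $w := e + \delta$ and expand $g(a+h) - g(a) = g'(a)h + R(h)$ with $R(h) := g(a+h)-g(a)-g'(a)h$. Substituting $h = Le - d = Lw - L\delta - d$ into $(k-\Delta)e = g(a+h)-g(a)$, using \eqref{eq:planLinPDE} for $(k-\Delta)\delta$, and cancelling the linear contributions yields
\begin{equation}\label{eq:planW}
(k-\Delta)w - g'(a)Lw = R(h), \qquad \partial_\nu w = 0 \text{ on } \{0,1\}.
\end{equation}
Testing \eqref{eq:planW} with $w$ and using the coercivity of Step 1 gives $\norm{w}{W} \leq \min(1,k)^{-1}\norm{R(h)}{H}$. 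Since $g\in C^2$, $g''$ is bounded on the bounded interval containing the ranges of $a$ and $a+h$ (for $\norm{d}{V}$ small, so that $\norm{h}{C[0,1]} \leq 1$; here the hypothesis $g''\leq C$ is invoked), so a second-order Taylor estimate gives $|R(h)(x)| \leq \tfrac{K}{2}|h(x)|^2$ pointwise, and combining with $W\hookrightarrow C[0,1]$ we get $\norm{R(h)}{H} \leq C\norm{h}{C[0,1]}\norm{h}{H} \leq C\norm{h}{W}^2 \leq C\norm{d}{V}^2$. Therefore
\[
\Phi(u+d) - \Phi(u) - (-L\delta) = L\big(T(u+d)-T(u)+\delta\big) = Lw,\qquad \norm{Lw}{V} \leq \norm{L}{}\norm{w}{W} \leq C\norm{d}{V}^2 = o(\norm{d}{V}),
\]
which establishes Fr\'echet differentiability of $\Phi$ at $u$ with $\Phi'(u)(d) = -L\delta$, where $\delta$ solves \eqref{eq:planLinPDE}, i.e. the stated PDE.

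The main obstacle is the coercivity in the first step: the zeroth-order perturbation $\delta\mapsto -g'(a)L\delta$ is nonsymmetric and a priori of indefinite sign, and it is precisely assumption \ref{item:nonnegativityOfL} together with the monotonicity of $g$ that turns it into a nonnegative perturbation of $k-\Delta$; without it, neither the solvability of \eqref{eq:planLinPDE} nor the coercive test of \eqref{eq:planW} for the remainder would go through. The remaining ingredients — the Lipschitz dependence of $T$ and the quadratic bound on $R(h)$ — are then routine energy estimates, the 1D Sobolev embedding $W\hookrightarrow C[0,1]$, and the $C^2$ regularity of $g$.
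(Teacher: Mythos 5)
Your proof is correct, but it follows a genuinely different route from the paper. The paper applies the implicit function theorem to $\mathcal{F}(u,T) = kT - \Delta T - g(LT+\Phi_0-u)$: it verifies Fr\'echet differentiability of $\mathcal{F}$ (via a Taylor estimate on $g$ and the embedding $H^1 \subset L^3$), continuity of $\mathcal{F}'$, and invertibility of $\partial_T\mathcal{F}(u,T)$ by Lax--Milgram, and then reads off $\Phi'(u)(d) = -L\delta$ from $h'(u) = -[\partial_T\mathcal{F}]^{-1}\partial_u\mathcal{F}$. You instead construct the candidate derivative directly, prove Lipschitz stability of $u \mapsto T(u)$, and close the argument with an energy estimate on $w = T(u+d)-T(u)+\delta$, whose source term is the second-order Taylor remainder $R(h)$. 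The two proofs share the one essential ingredient --- coercivity of the linearised operator $(k-\Delta) - g'(a)L$, which rests precisely on \ref{item:nonnegativityOfL} together with $g' \leq 0$ --- but your approach is more elementary (no IFT, no need to check continuity of the full derivative $\mathcal{F}'$) and yields the stronger quantitative remainder bound $\norm{\Phi(u+d)-\Phi(u)+L\delta}{V} \leq C\norm{d}{V}^2$; it also handles the boundedness of $g''$ more cleanly, needing it only on the compact range of $a$ and $a+h$ (automatic from $g \in C^2$ in 1D), whereas the paper invokes a global $L^\infty$ bound. What the IFT route buys in exchange is a Fr\'echet differentiable local solution map $v \mapsto h(v)$ on a whole neighbourhood of $u$, with the derivative formula produced mechanically rather than guessed and verified.
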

\begin{proof}
The idea is to apply the implicit function theorem to the map $\mathcal{F}\colon V\times W \to W^*$ defined by
\[\mathcal{F}(u,T) = kT-\Delta T -g(LT + \Phi_0 -u),\]
which we understand via the duality pairing
\[\langle \mathcal{F}(u,T), \varphi \rangle_{W^*, W} = k\int T\varphi + \int \nabla T \nabla \varphi - \int g(LT+\Phi_0-u)\varphi.\]
In order to do this, we have to check a number of properties.
\paragraph{(1) Fr\'echet differentiability of $\mathcal{F}$} We concentrate on the nonlinear term in $\mathcal{F}$ since the linear terms are clearly differentiable. Since $g \in C^2(\mathbb{R})$, Taylor's theorem gives the expression
\[g(x+h)-g(x) = g'(x)h + \frac 12 g''(\theta)h^2, \qquad x,h \in \mathbb{R}\]
where $\theta \in (x, x+h)$. Since $g''$ is bounded, we estimate
\[|(g(x+h)-g(x)-g'(x)h)w| \leq \frac 12 \norm{g''}{L^\infty}h^2|w|\qquad x,h, w \in \mathbb{R}.\]
This implies
\begin{align*}
\norm{g(v+d)-g(v)-g'(v)d}{W^*} &= \sup_{\substack{w \in W\\ \norm{w}{W}=1}} |\langle g(v+d)-g(v)-g'(v)d, w \rangle_{W^*,W}|\\
&= \sup_{\substack{w \in W\\ \norm{w}{W}=1}} \left|\int (g(v+d)-g(v)-g'(v)d)w\right|\\
&\leq \frac{\norm{g''}{L^\infty}}{2} \sup_{\substack{w \in W\\ \norm{w}{W}=1}} \int d^2|w|\\
&\leq \frac{\norm{g''}{L^\infty}\norm{d}{L^3}^2}{2} \sup_{\substack{w \in W\\ \norm{w}{W}=1}} \norm{w}{L^3} 
\end{align*}
by Holder's inequality with exponents $(3\slash 2, 3)$. When the dimension is less than $6$, $H^1 \subset L^3$ continuously by a Sobolev inequality (see eg. Corollary 9.14 of \cite{MR2759829}) so that the above becomes
\[\frac{\norm{g(v+d)-g(v)-g'(v)d}{W^*} }{\norm{d}{W}}
\leq C\frac {\norm{g''}{L^\infty} \norm{d}{W}}{2}.
\]
Now take the limit $d \to 0$ in $W$ and we see that $g\colon W \to W^*$ is Fr\'echet differentiable. The composition of Fr\'echet differentiable maps is also Fr\'echet, so we have shown that the $\mathcal{F}$ is Fr\'echet with respect to $z$ and $u$. Indeed 
\begin{equation}\label{eq:derivativeOfG}
\mathcal{F}'(u,T)(d,h) = (k-\Delta)h - g'(LT+\Phi_0-u)(Lh-d).
\end{equation}

\paragraph{(2) Continuity of $\mathcal{F}'$}The mean value theorem yields the estimate
\[|g'(x)-g'(y)| \leq |g''(\lambda)||x-y|  \qquad x,y \in \mathbb{R}\]
for some $\lambda \in (x,y)$.  Hence if $v_n \to v$ in $W$, we find
\begin{align*}
\norm{g'(v_n)-g'(v)}{\mathcal{L}(W,W^*)} 
&= \sup_{\substack{w \in W\\\norm{w}{W} = 1}}\left|\int(g'(v_n)-g'(v))w\right|\\
&\leq \norm{g''}{L^\infty}\sup_{\substack{w \in W\\\norm{w}{W} = 1}} \int|v_n-v||w|\\
&\leq \norm{g''}{L^\infty}\norm{v_n-v}{H},
\end{align*}
showing that $g'\colon W \to \mathcal{L}(W,W^*)$ is continuous. By this fact and linearity we get that the full Fr\'echet derivative of the map in question is continuous.

Observe that
\begin{align*}
\partial_{T}\mathcal{F}(u,T)(h) = (k-\Delta)h - g'(LT+\Phi_0-u)Lh
\end{align*}
which is clearly linear in the direction. The map $\partial_{T}\mathcal{F}(u,T)\colon W \to W^*$ is also continuous:
\begin{align*}
\norm{\partial_{T}\mathcal{F}(u,T)(h)}{W^*} &= \norm{(k-\Delta)h - g'(LT+\Phi_0-u)Lh}{W^*} \\
&\leq \max(1,k)\norm{h}{W} + \norm{g'(LT+\Phi_0-u)Lh}{H} \\
&\leq \max(1,k)\norm{h}{W} +  \norm{g'}{\infty}\norm{Lh}{H}\\
&\leq C\norm{h}{W}.
\end{align*}
\paragraph{(3) Invertibility of $h\mapsto \partial_{T}\mathcal{F}(u,T)(h)$} 
We need to show that this derivative is invertible, i.e., for every $b \in W^*$, there exists a $h\in W$ such that $\partial_{T}\mathcal{F}(u,T)(h)= b$ or equivalently
\begin{equation}\label{eq:preEX3}
\begin{aligned}
(k-\Delta)h - g'(LT + \Phi_0 -u)Lh = b.
\end{aligned}
\end{equation}
By Lax--Milgram (applicable since $g' \leq 0$ and we have \eqref{item:nonnegativityOfL}, the second term leads to a coercive bilinear form), this has a unique solution $h \in W$ such that
\[\min(1,k)\norm{h}{W} \leq \norm{b}{W^*}.\]
The inverse is also bounded by the continuous dependence above. For later convenience, we set the solution mapping of the PDE \eqref{eq:preEX3} as $h=\mathcal{S}(b,LT+\Phi_0-u)$. 

\paragraph{(4) Application of the implicit function theorem}Suppose $(u,T)$ is such that $\mathcal{F}(u,T) = 0$ (eg. $(u,T)$ could be a solution given by Theorem \ref{thm:existenceForThermoformingSystem}). The implicit function theorem then gives the existence of a Fr\'echet differentiable map $h\colon N_u \to W$ defined on a neighbourhood $N_u \subset V$ of $u$ such that $h(u)=T$ and $\mathcal{F}(v,h(v))=0$ for all $v \in N_u$, i.e.,
\begin{equation*}
\begin{aligned}
kh(v)-\Delta h(v) -g(Lh(v)+\Phi_0-v)&=0\qquad\forall v \in N_u.
\end{aligned}
\end{equation*}
By definition, $\Phi(u) = \Phi_0 + LT|_u$, and since solutions of the PDE \eqref{eq:pdeForT} for $T$ are unique, we deduce that $T|_u = h(u)$, and so $\Phi$ inherits the differentiability property of $h$, at least locally around $u$.
\paragraph{(5) 
Expression for $\Phi'(u)(h)$}The implicit function theorem tells us that \[h'(u)=-[\partial_{T}\mathcal{F}(u,h(u))]^{-1}\partial_u \mathcal{F}(u,h(u)).\]
Observe from \eqref{eq:derivativeOfG} that 
\begin{equation}\label{eq:preEX1}
\partial_u \mathcal{F}(u,h(u))(v) = g'(Lh(u)+\Phi_0-u)v,
\end{equation}
and by recalling the definition of the solution mapping $\mathcal{S}$ associated to \eqref{eq:preEX3} above we see that 
\begin{equation}\label{eq:preEX2}
[\partial_{T}\mathcal{F}(u,h(u))]^{-1}(b) =
\mathcal{S}(b, Lh(u) + \Phi_0 -u ).
\end{equation}
Combining \eqref{eq:preEX1} and \eqref{eq:preEX2},
\begin{align*}
h'(u)(d) 
&= -\mathcal{S}(g'(Lh(u)+\Phi_0-u)d,Lh(u)+\Phi_0-u),
\end{align*}
which implies that since $\Phi(u) = \Phi_0 + LT|_u = \Phi_0 + Lh(u)$,
\begin{align*}
\Phi'(u)(d) &= L(h'(u)(d))\\
 &= L(-\mathcal{S}(g'(Lh(u)+\Phi_0-u)d,Lh(u)+\Phi_0-u))\\
  &= -L(\underbrace{\mathcal{S}(g'(\Phi(u)-u)d,\Phi(u)-u)}_{=:\delta})
\end{align*}
where $\delta$ satisfies (by definition of $\mathcal{S}$)
\begin{equation*}
(k-\Delta)\delta - g'(\Phi(u)-u)L\delta = g'(\Phi(u)-u)d.
\end{equation*}
\end{proof}
The following estimate is immediate thanks to the local increasing property of $L$:
\begin{align}
\min(1,k)\norm{\delta}{W} \leq \norm{g'}{\infty}\norm{d}{H},\label{eq:deltaBd}
\end{align}
and by linearity it follows that solutions of the above PDE are unique.
\begin{corollary}\label{cor:allAssumptions}
Let \ref{item:smallnessForF}  hold. Then assumptions \ref{itm:PhiDiff}, \ref{itm:compContOfPhi}, \ref{itm:compContOfDerivOfPhi}, \ref{item:assPhiDerivativeBounded} are satisfied. If also
\[\min(1,k)^{-1}\norm{L}{\mathcal{L}(W,V)}\norm{g'}{\infty} < \frac 12,\]
then \ref{itm:smallnessOfDerivOfPhi} is satisfied.
\end{corollary}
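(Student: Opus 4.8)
The plan is to dispatch \ref{itm:compContOfPhi} and \ref{itm:PhiDiff} by citing the two results already established for the thermoforming $\Phi$, and to reduce \ref{itm:compContOfDerivOfPhi}, \ref{item:assPhiDerivativeBounded} and \ref{itm:smallnessOfDerivOfPhi} to a single uniform estimate for $\Phi'$. First, \ref{itm:compContOfPhi} is precisely the preceding lemma on complete continuity of $\Phi\colon V\to V$ (which used \ref{item:smallnessForF}). For \ref{itm:PhiDiff}: the differentiability theorem (which needs \ref{item:nonnegativityOfL} and a bound on $g''$) produces, via the implicit function theorem, Fr\'echet differentiability of $\Phi$ on a neighbourhood $N_u\subset V$ of $u$; Fr\'echet differentiability at $u$ in particular yields Hadamard directional differentiability there with $\Phi'(u)$ the (linear) Fr\'echet derivative, and — as observed in the remark allowing \ref{itm:PhiDiff} to be localised at $u$ — this is all that is needed.

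The workhorse for the remaining assumptions is the following bound, valid for every $v\in N_u$: by the differentiability theorem $\Phi'(v)(d)=-L\delta_v$ with $\delta_v\in W$ solving $(k-\Delta)\delta_v-g'(\Phi(v)-v)L\delta_v=g'(\Phi(v)-v)d$; testing with $\delta_v$, using $\langle(k-\Delta)\delta_v,\delta_v\rangle\ge\min(1,k)\norm{\delta_v}{W}^2$, the sign $-\int g'(\Phi(v)-v)(L\delta_v)\delta_v\ge 0$ (valid since $g$ is decreasing and \ref{item:nonnegativityOfL} gives $\delta_v L\delta_v\ge 0$ a.e.) and $\norm{g'}{\infty}<\infty$, one obtains exactly \eqref{eq:deltaBd}, but now \emph{uniformly in $v$}:
\[\norm{\delta_v}{W}\le\min(1,k)^{-1}\norm{g'}{\infty}\norm{d}{H},\]
\[\norm{\Phi'(v)(d)}{V}=\norm{L\delta_v}{V}\le\min(1,k)^{-1}\norm{L}{\mathcal{L}(W,V)}\norm{g'}{\infty}\norm{d}{H}.\]
The crucial feature is that the right-hand side involves the weaker norm $\norm{d}{H}$ rather than $\norm{d}{V}$.

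Now the three remaining assumptions follow. \ref{itm:compContOfDerivOfPhi} (at $v=u$, or any $v\in N_u$): if $d_n\weaklyto d$ in $V$, then $d_n\to d$ in $H$ by the compact embedding $V\hookrightarrow H$ on the bounded interval, whence $g'(\Phi(u)-u)d_n\to g'(\Phi(u)-u)d$ in $H\hookrightarrow W^*$; the solution map $d\mapsto\delta$ is bounded and linear (operator norm $\le\min(1,k)^{-1}$ by the estimate above), so $\delta_n\to\delta$ in $W$ and hence $\Phi'(u)(d_n)=-L\delta_n\to-L\delta=\Phi'(u)(d)$ in $V$ by boundedness of $L\colon W\to V$. \ref{item:assPhiDerivativeBounded}: for small $t$ the base point $u+tb+\lambda h(t)$ lies in $N_u$, and the uniform estimate gives
\[\frac{\norm{\Phi'(u+tb+\lambda h(t))h(t)}{V}}{t}\le\min(1,k)^{-1}\norm{L}{\mathcal{L}(W,V)}\norm{g'}{\infty}\,\frac{\norm{h(t)}{H}}{t}\le C\,\frac{\norm{h(t)}{V}}{t}\to 0\]
whenever $t^{-1}h(t)\to 0$ in $V$ (using $\norm{\cdot}{H}\le C\norm{\cdot}{V}$); since the bound is independent of $b$, the convergence is uniform in $b$. \ref{itm:smallnessOfDerivOfPhi}: taking the $V$-norm to be the full $H^1$-norm (for which $C_a=C_b=1$, so the threshold equals $\tfrac12$, and $\norm{d}{H}\le\norm{d}{V}$), for any $z(t)\to u$ one has $z(t)\in N_u$ for $t$ small and the uniform estimate yields $\norm{\Phi'(z(t))b}{V}\le C_\Phi\norm{b}{V}$ with $C_\Phi=\min(1,k)^{-1}\norm{L}{\mathcal{L}(W,V)}\norm{g'}{\infty}<\tfrac12=\frac{1}{1+C_a^{-1}C_b}$, precisely the extra hypothesis.

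The main obstacle is the uniform-in-$v$ bound of the second paragraph and, above all, the recognition that it controls $\Phi'(v)(d)$ by $\norm{d}{H}$: this single observation powers \ref{item:assPhiDerivativeBounded} (through the compact embedding $V\hookrightarrow H$) and simultaneously delivers the sharp constant $C_\Phi$ matching \ref{itm:smallnessOfDerivOfPhi}. A secondary point requiring care is that $\Phi$ is only known to be differentiable on $N_u$, so one must check that every base point occurring in \ref{item:assPhiDerivativeBounded} and \ref{itm:smallnessOfDerivOfPhi} lies in $N_u$ for $t$ sufficiently small (which holds since those base points converge to $u$), and that the $V$-norm is fixed so that $C_a=C_b=1$.
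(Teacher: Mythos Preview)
Your proof is correct and follows essentially the same route as the paper: both reduce \ref{itm:compContOfDerivOfPhi}, \ref{item:assPhiDerivativeBounded} and \ref{itm:smallnessOfDerivOfPhi} to the single estimate $\norm{\Phi'(v)(d)}{V}\le \min(1,k)^{-1}\norm{L}{\mathcal{L}(W,V)}\norm{g'}{\infty}\norm{d}{H}$ (the paper's \eqref{eq:deltaBd} at a general base point), exploiting that the right-hand side involves the $H$-norm rather than the $V$-norm. You are somewhat more explicit than the paper in two respects --- you spell out why the threshold in \ref{itm:smallnessOfDerivOfPhi} equals $\tfrac12$ (via $C_a=C_b=1$ for the full $H^1$-norm) and you check that the base points $u+tb+\lambda h(t)$ and $z(t)$ lie in the neighbourhood $N_u$ on which Fr\'echet differentiability was established --- but these are clarifications rather than a different approach.
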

\begin{proof}
It remains for us to show the latter two assumptions. Let us see why the mapping $d \mapsto \Phi'(u)(d)$ is completely continuous. Let $d_n \weaklyto d$ in $V$. Using the continuous dependence formula above, we find
\[\min(1,k)\norm{\delta_n-\delta_m}{W} \leq \norm{g'}{\infty}\norm{d_n-d_m}{H}\]
and thus $\delta_n$ converges strongly in $W$; the limit can be easily identified as the correct one.

Take $b \in V$ and $h\colon (0,T) \to V$ a higher order term. Then by boundedness of $L$ and the estimate \eqref{eq:deltaBd},
\begin{align*}
&\norm{\Phi'(u+tb+\lambda h(t))h(t)}{V}\\
 &=  \lVert L(\mathcal{S}(g'(\Phi(u+tb+\lambda h(t))-u-tb-\lambda h(t))h(t),\Phi(u+tb+\lambda h(t))\\
 &\qquad\qquad\qquad\qquad\qquad\qquad\qquad\qquad\qquad\qquad\qquad\qquad-u-tb-\lambda h(t))\rVert_{V}\\
&\leq C \lVert\mathcal{S}(g'(\Phi(u+tb+\lambda h(t))-u-tb-\lambda h(t))h(t),\Phi(u+tb+\lambda h(t))\\
&\qquad\qquad\qquad\qquad\qquad\qquad\qquad\qquad\qquad\qquad\qquad\qquad-u-tb-\lambda h(t)\rVert_W\\
& \leq C\min(1,k)^{-1}\norm{g'}{\infty}\norm{h(t)}{H}
\end{align*}
which vanishes in the limit after division of $t$. Thus assumption \ref{item:assPhiDerivativeBounded} is satisfied.

By the previous theorem, we see that
\begin{align*}
\norm{\Phi'(z)(d)}{V} &\leq \norm{L}{\mathcal{L}(W,V)}\norm{\delta}{W}\\
&\leq \min(1,k)^{-1}\norm{L}{\mathcal{L}(W,V)}\norm{g'}{\infty}\norm{d}{H}
\end{align*}
which by assumption leads to \ref{itm:smallnessOfDerivOfPhi}.
\end{proof}
\subsection{Numerical results}
We consider the model \eqref{eq:QVIforu}--\eqref{eq:mould} in two dimensions on the domain $[0,1]\times [0,1]$ with homogeneous Dirichlet conditions for the QVI. We approximate the QVI \eqref{eq:QVIforu} by a penalised equation and numerically solve the system
\begin{equation}\label{eq:approximatedSystem}
\begin{aligned}
Au + \alpha\max(0,u - y) -f &= 0\\
kT-\Delta T -g(y-u)&= 0\\
\partial_\nu T &= 0\\
y-\Phi_0 -LT &= 0
\end{aligned}
\end{equation}
for a parameter $\alpha$ large (as $\alpha \to \infty$, the solution of \eqref{eq:approximatedSystem} converges to the solution of \eqref{eq:QVIforu}--\eqref{eq:mould}). The penalisation of the QVI as a PDE was inspired by the case for variational inequalities (eg. see \cite{MR2822818}) and will be discussed in a future work by the authors. We use a finite difference scheme with $N^2$ uniformly distributed nodes and meshsize $h=1/(N+1)$ with $N=256$. The system \eqref{eq:approximatedSystem} is discretised and solved via a semismooth Newton method applied to the mapping 
\[\mathcal{F}\colon V \times W \times V \to V^* \times W^* \times V, \qquad (u,T,y) \mapsto \mathcal{F}(u,T,y)\] defined by the left hand side of \eqref{eq:approximatedSystem}. For this purpose, the derivative
\[\mathcal{F}'(u,T,y)(v,\tau,z) = \begin{pmatrix}
Av + \alpha\max'(0,u-y)(v-z)\\
k\tau-\Delta\tau - g'(y-u)(z-v)\\
\partial_\nu \tau \\
z-L\tau
\end{pmatrix}\]
was needed in order to obtain the next iterate through the Newton update scheme. Here, $\max'(0,u-y)$ denotes the Newton derivative of the maximum function, given by \S 8.3 in \cite{Ito2008}
\[{\max}'(0,u) = 
\begin{cases}
0 &\text{if $u < 0$}\\
\delta_N &\text{if $u=0$}\\
1 &\text{if $u > 0$}
\end{cases}
\]
where $\delta_N \in [0,1]$ is arbitrary; for the computation we pick $\delta_N = 0.1$. An approximation to the directional derivative of the QVI solution mapping was computed by first smoothing the nonlinearity in the first equation of \eqref{eq:approximatedSystem} by a function $\max_g$ (which is the global smoothing used in \cite{MR2822818} where the smoothing parameter is $10^{-5}$), and differentiating it with respect to $f$ in a direction $d$:
\[Au'(f)(d) +\alpha{\max}_g'(0,u-y)u'(f)(d) - d= 0,\]
i.e., it is related to the solution of the PDE
\begin{equation}\label{eq:pdeForDer}
Aw + \alpha{\max}_g'(0,u-y)w -d =0
\end{equation}
for a direction $d$. This equation was solved 
and was checked to be within a tolerance of $10^{-4}$ of the following object
\[\frac{u(f+ \epsilon d)-u(f)}{\epsilon }\]
with $\epsilon =10^{-5},$ which is an approximation to the difference quotient definition of the directional derivative.

\subsubsection{Choice of terms and initial iterate}
For the source term $f$, we choose the constant function $f\equiv 10^2$. The nonlinearity $g$ appearing in the source term for the $T$ equation is selected as the following smoothing of \eqref{eq:exampleg} for two parameters $\kappa>0$ and $s > 0$:
\begin{equation}\label{eq:numericsg}
g(r) = \begin{cases}
	   \kappa &\text{if $r  \leq 0$}\\    
       \kappa-{8\kappa r^2 }\slash{3s^2}&\text{if $0 < r \leq s/4$}\\          
        7\kappa/6 - 4\kappa r/3s &\text{if $s/4 < r \leq 3s/4$}\\
8\kappa(s-r)^2/3s^2 &\text{$3s/4 < r \leq s$}\\
 0 &\text{$r \geq s$}
 \end{cases},
\end{equation}
see Figure \ref{fig:g}.
\begin{figure}[H]
  \centering
        \includegraphics[width=0.6\textwidth]{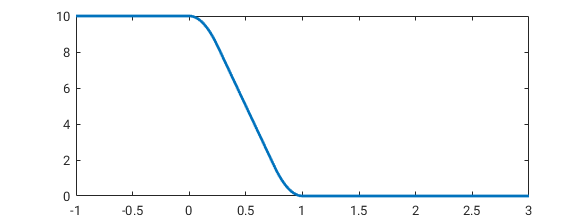}\\
                \includegraphics[width=0.6\textwidth]{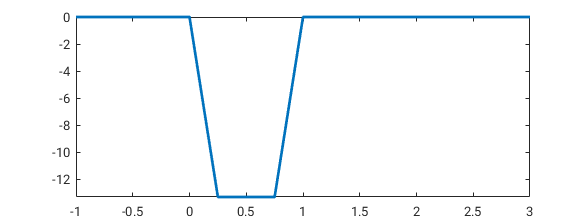}\\
      \caption{Plot of the function $g$ in \eqref{eq:numericsg} and its derivative, with $\kappa=10$ and $s=1$}\label{fig:g}
\end{figure}
The operator $L$ is chosen as the superposition mapping
\[(Lv)(x) = 5.25\times 10^{-3}\rho(x)v(x) =: C_L\rho(x)v(x)\]
where $\rho\colon [0,1]\times [0,1]\to \mathbb{R}$ is a smooth bump function (constructed from the exponential function) which is zero on the boundary with $\norm{\rho}{\infty}=1$ and $\norm{\grad \rho}{\infty}\leq \sqrt{50}$. Let us check the condition of Corollary \ref{cor:allAssumptions} that assures \ref{itm:smallnessOfDerivOfPhi}. We see that, given our choice of $g$ and $k$,
\begin{align*}
\min(1,k)^{-1}\norm{L}{\mathcal{L}(W,V)}\norm{g'}{\infty}&= \frac{40}{3}\norm{L}{\mathcal{L}(W,V)}
\end{align*}
where the operator norm on the right hand side can be estimated by the calculation 
\begin{align*}
\norm{Lv}{V}^2 &= \int |C_L\varphi|^2 |v|^2 + |\nabla (C_L\varphi v)|^2\\
&\leq C_L^2\norm{\varphi}{\infty}^2 \int v^2 + |\nabla v|^2 + C_L^2\norm{\nabla \varphi}{\infty}^2\int v^2\\
&\leq C_L^2(\norm{\varphi}{\infty}^2 + \norm{\nabla \varphi}{\infty}^2)\norm{v}{W}^2,
\end{align*}
so that
\begin{align*}
\min(1,k)^{-1}\norm{L}{\mathcal{L}(W,V)}\norm{g'}{\infty} &\leq 
\frac{40C_L\sqrt{51}}{3} 
\end{align*}
and hence if 
\[C_L < \frac{3}{80\sqrt{51}}\approx 0.00525,\]
we have assumption \ref{itm:smallnessOfDerivOfPhi}. 

        
As for the initial mould $\Phi_0$, we choose it as follows. Define $w\colon [0,1] \to \mathbb{R}$        
\begin{equation}
w(r)=
\begin{cases}
5(r/N-1/10) &\text{if $N/10 \leq r \leq 3N/10$}\\
1 &\text{if $3N/10 <r < 7N/10$}\\
1- 5(r/N-7/10) &\text{if $7N/10 \leq r \leq 9N/10$}\\
0 &\text{otherwise}
\end{cases}
\end{equation}
and set $\Phi_0(r,t) = w(r)w(t)$, see Figure \ref{fig:initialMould}.
\begin{figure}[H]
  \centering
        \includegraphics[width=0.7\textwidth]{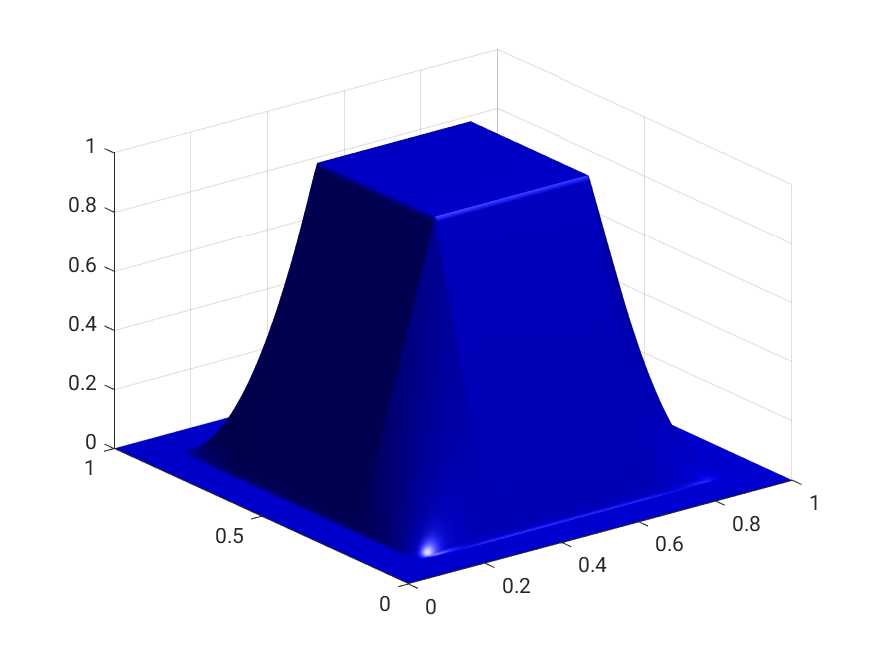}
      \caption{The initial mould $\Phi_0$}\label{fig:initialMould}
\end{figure}
The directional derivative was taken in the direction $\chi_A$, the characteristic function of the set $A=\{(x,y) : x > 1/2\}$.
The remaining parameters appearing in the physical model are     $k =1$, $\alpha=10^8$, $\kappa = 10$ and $s=1$. The initial iterates for $(u^0_h,T^0_h,y^0_h) = (0.9\times \Phi_0, 0.2, 10)$ were used, where $\Phi_0$ is as defined above through the bump function.
\subsubsection{Numerical details}

The Newton iterates $(u^j_h,T^j_h,y^j_h)$, were assumed to converge if  $\|\mathcal{F}(u^j_h,T^j_h,y^j_h)\|_{L^2}<4\times10^{-9}$, for some $j$, and we denote the solution as $(u_h,T_h,y_h):=(u^j_h,T^j_h,y^j_h)$. Note that the $L^2$-norm is a stronger choice than the one for $V^*\times W^*\times V$, further improving accuracy for $\mathcal{F}(u,T,y)=0$.


\begin{table}
\centering
    \begin{tabular}{| l | p{3cm} | p{3cm}| p{3cm}| p{3cm}|}
    \hline
    \textbf{No. of nodes} & \# \textbf{Newton iterations to solve system \eqref{eq:approximatedSystem}} & \textbf{$L^2$ error in solution of system} & \textbf{$L^2$ error in solution of derivative}\\ \hline
    256& 14  &$3.96\times 10^{-9}$ &$1.96\times 10^{-15}$\\ \hline

   \end{tabular}
   \caption{Numerical results}\label{table:numericalResults}
\end{table}
\subsubsection{Results and analysis}
See Table \ref{table:numericalResults} for the numerical results. The third column in the table refers to the error of the approximate solution $(u_h, T_h, y_h)$ in that it measures the $L^2$ norm of $\mathcal{F}(u_h, T_h, y_h)$, and likewise for the fourth column. One can see that a relatively low number of Newton iterations is performed to obtain an accurate solution. The results of the experiment are visualised in Figure \ref{fig:results256}. Let us highlight some interesting observations.
\begin{itemize}
\item The effect of the temperature interplay between the membrane and mould can be immediately seen: the initial mould $\Phi_0$ (Figure \ref{fig:initialMould}) grows and becomes more curved and smoothed out, which is natural given that the membrane is initially placed below the mould and is pushed upwards.
\item The model produces a membrane $u$ that appears to be rather a good fit for the thermoforming process; it can be observed to be similar to the final mould, which is confirmed by the images of the coincidence sets
\item The directional derivative is coloured yellow and red; red refers to the parts of the domain corresponding to the coincidence set $\{u=y\}$. 
\end{itemize}

\begin{figure}[H]
  \centering
        \begin{subfigure}[b]{0.3\textwidth}
        \includegraphics[width=\textwidth]{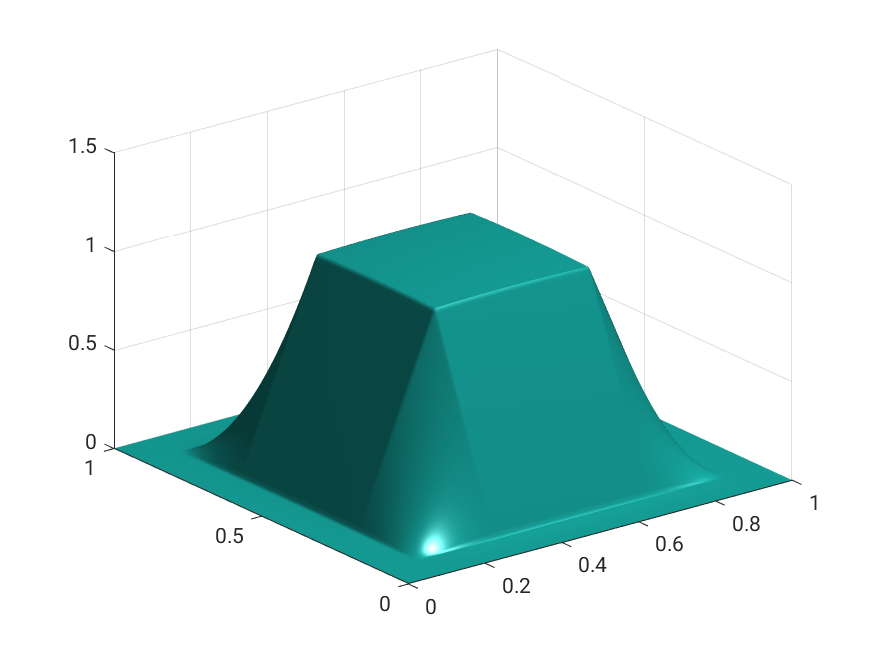}
        \caption{Final mould $y$}
    \end{subfigure}
      \begin{subfigure}[b]{0.3\textwidth}
        \includegraphics[width=\textwidth]{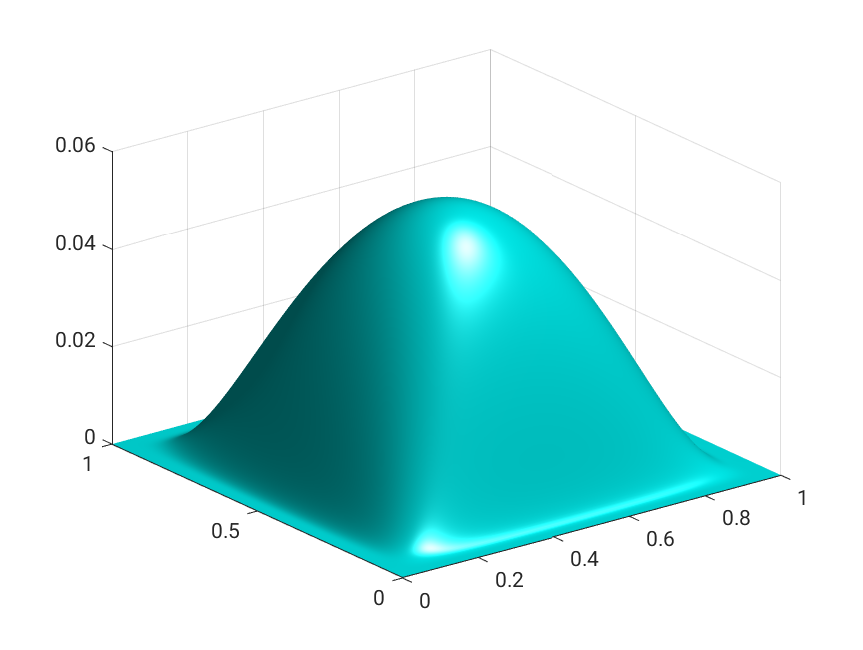}
        \caption{Difference between $y$ and $\Phi_0$}

    \end{subfigure}
      \begin{subfigure}[b]{0.3\textwidth}
        \includegraphics[width=\textwidth]{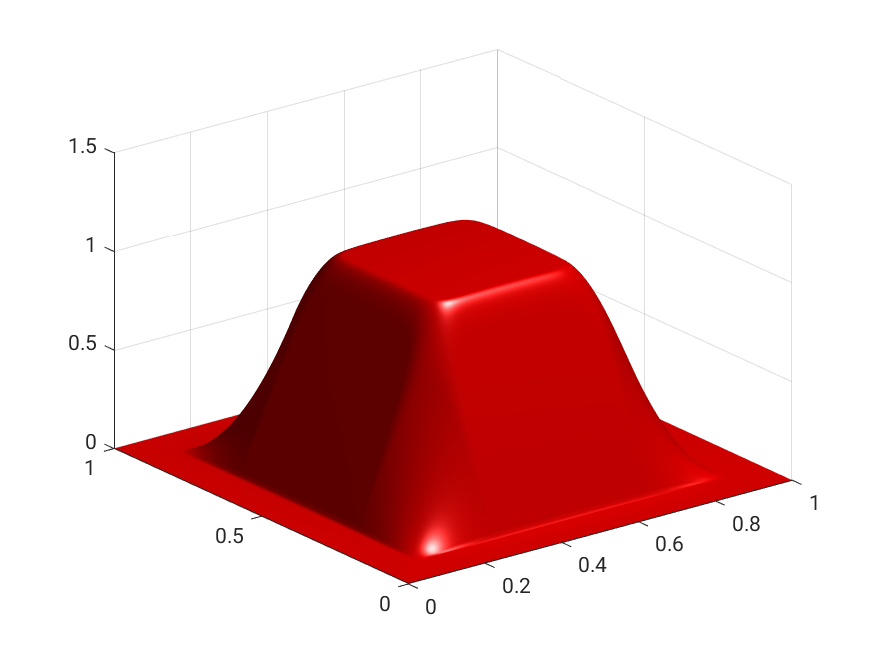}
        \caption{Membrane $u$}

    \end{subfigure}
          \begin{subfigure}[b]{0.3\textwidth}
        \includegraphics[width=\textwidth]{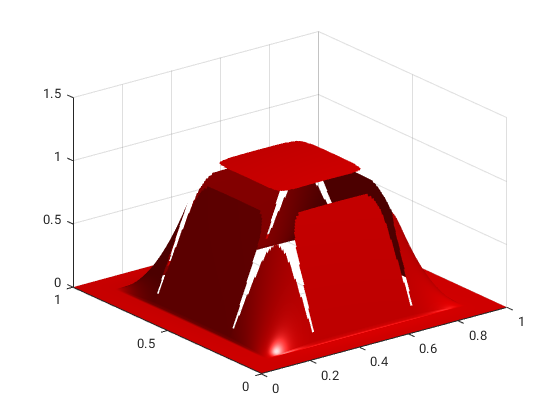}
        \caption{Membrane $u$ on the coincidence set}

    \end{subfigure}
      \begin{subfigure}[b]{0.3\textwidth}
        \includegraphics[width=\textwidth]{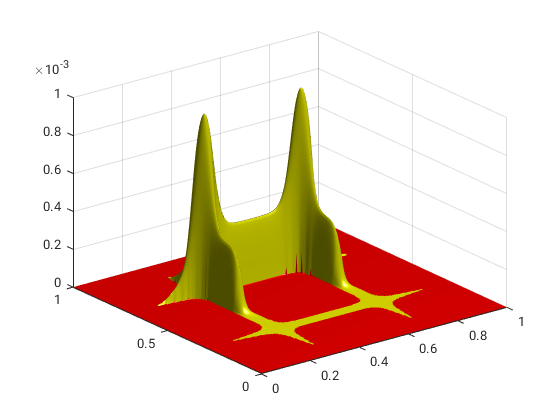}
        \caption{The directional derivative}
    \end{subfigure}    
      \caption{Computation results}\label{fig:results256}
\end{figure}
Figure \ref{fig:results256CASc} shows the coincidence set.


\begin{figure}[H]
  \centering
        \includegraphics[width=\textwidth]{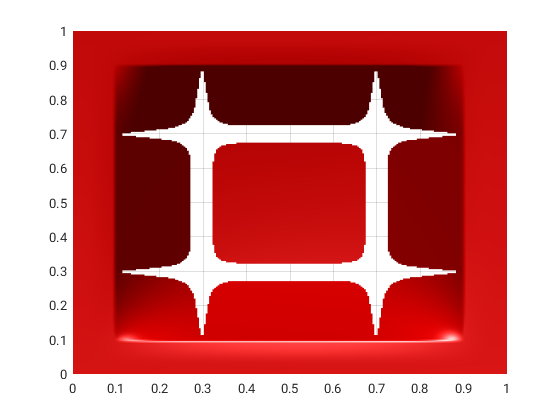}
      \caption{Top-down view of the coincidence set}\label{fig:results256CASc}
\end{figure}

\section{Further work}
We did not fully exploit the monotonicity results in \S \ref{sec:properties} and we aim to address this in a future work, where the removal of the inconvenient assumption \ref{itm:smallnessOfDerivOfPhi} would be desirable. In relation to this, we could consider a weaker notion of directional derivative: instead of asking the derivative to exist in $V$ we could simply ask for it to exist in the $L^2(X)$ topology instead. The numerical experiments on the thermoforming application in \S \ref{sec:thermoforming} can be further solidified by verification of strict complementarity or lack thereof and it would also be interesting to study the properties of the computed directional derivative with respect to the characterisation obtained in Theorem \ref{thm:main}. The optimal control of QVI is a work in progress by the current authors, and numerical analysis related to these subjects will also be explored.
\section*{Acknowledgements}
This research was carried out in the framework of \textsc{MATHEON} supported by the Einstein Foundation Berlin within the
ECMath projects OT1, SE5, CH12 and SE15/SE19 as well as project A-AP24. The authors further acknowledge the
support of the DFG through the DFG-SPP 1962: Priority Programme ``Non-smooth and Complementarity-based
Distributed Parameter Systems: Simulation and Hierarchical Optimization''  within Projects 10, 11, and 13, through grant no. HI 1466/7-1 Free Boundary Problems and Level Set Methods, and SFB/TRR154. The authors wish to thank Gerd Wachsmuth and Daniel Wachsmuth for their useful comments which helped improve the work.
\appendix
\section{Simplification of the Laplace--Beltrami operator}\label{sec:curvature}
Define $T(r) := \hat T(r,\Phi(u)(r))$ so that $T\colon [0,1]\to \mathbb{R}$. 
We want to write $\Delta_\Gamma \hat T$ in terms of $\Delta T$. With $w:=\Phi(u)$, the metric tensor is $g_{11}=g = 1+ (w')^2$ and its inverse is $g^{11}={1}\slash(1+(w')^2)=g^{-1}$ and so the Laplace--Beltrami of a function $\hat H\colon \Gamma \to \mathbb{R}$ can be written as
\[\Delta_\Gamma \hat H = \frac{1}{\sqrt{g}}(g^{11}\sqrt{g}H')'\] where $H\colon [0,L] \to \mathbb{R}$ is defined by $H(r) = \hat H(r,w(r))$.
Thus
\begin{align*}
\Delta_\Gamma \hat H 
=\frac{H''}{g} - \frac{H'g'}{2g^2}.
\end{align*}
Noticing that
$g' = 2w'w''$, the above reads
\begin{align*}
\Delta_\Gamma \hat H 
&=\frac{H''}{1+(w')^2} - \frac{w'w''H'}{(1+(w')^2)^2}
\end{align*}
And now picking $\hat H=\hat T$, we find
\begin{align*}
\Delta_\Gamma \hat T
&=\frac{\Delta T}{1+(\Phi(u)')^2} - \frac{T'\Phi(u)'\Delta \Phi(u)}{(1+(\Phi(u)')^2)^2}
\end{align*}
Then if $x=(x_1,x_2) = (r, \Phi(r))$ the equation \eqref{eq:pdeForhatT} becomes
\begin{align*}
kT(r) - \frac{\Delta T(r)}{1+(\Phi(u)'(r))^2} + \frac{T'(r)\Phi(u)'(r)\Delta \Phi(u)(r)}{(1+(\Phi(u)'(r))^2)^2} 
&= g(\Phi(r)-u(r)).
\end{align*}
In order to deduce \eqref{eq:pdeForT} we have taken $\Phi(u)'$ to be close to zero in the equality above.

\bibliographystyle{abbrv}
\bibliography{QVIPaper}


\end{document}